\documentclass[12pt,reqno]{amsproc}
\usepackage{}
\usepackage{mathrsfs, amsfonts, amsthm, amsmath, amssymb}
\usepackage{hyperref}
\usepackage{enumitem}
\usepackage{cleveref}

\hypersetup{
  colorlinks   =  true, 
  urlcolor     = black, 
  linkcolor    = black, 
  citecolor   = black 
}

\usepackage{lineno}

\setlength{\hoffset}{-0.9in} 
\setlength{\textwidth}{6.75in}

\theoremstyle{plain}
\newtheorem{theorem}{Theorem}[section]

\newtheorem{claim}[theorem]{Claim}

\newtheorem{corollary}[theorem]{Corollary}

\newtheorem{definition}[theorem]{Definition}

\newtheorem{lemma}[theorem]{Lemma}

\newtheorem{proposition}[theorem]{Proposition}

\theoremstyle{remark} 
\newtheorem{example}[theorem]{Example}
\newtheorem{remark}[theorem]{Remark}

\makeatletter
\newcommand{\LeftEqNo}{\let\veqno\@@leqno}

\makeatother

 \numberwithin{equation}  {section}

\def\A{\mathcal A}
\def\B{\mathcal B}

\def\M{\mathcal M}

\def\P{\mathcal P}

\def\R{\mathcal R}
\def\SS{\mathcal S}

\def\CCC{\mathbb C}
\def\FFF{\mathbb F}
\def\NNN{\mathbb N}


\newtheorem*{HalmosProblem}{\bf Problem 8}

\begin{document}

\

\vspace{-2cm}


\title{Reducible operators in non-$\Gamma$ type ${\rm II}_1$ factors} 

\author{Junhao Shen}
\address{Junhao Shen, Department of Mathematics \& Statistics, University of
New Hampshire, Durham, 03824, US}
\email{Junhao.Shen@unh.edu}
\thanks{Junhao Shen was partly supported by TBA}

\author{Rui Shi}
\address{Rui Shi, School of Mathematical Sciences, Dalian University of
Technology, Dalian, 116024, China}
\email{ruishi@dlut.edu.cn, ruishi.math@gmail.com}
\thanks{Rui Shi was partly supported by NSFC(Grant No.12271074).}



\subjclass[2010]{Primary 47C15, 46L10; Secondary 46L36}


\keywords{von Neumann alegbras, type ${\rm II}_1$ factors, property $\Gamma$, reducible operators}

\begin{abstract}
The eighth problem of Halmos in \cite{Hal2} asks whether every operator on a separable infinite-dimensional Hilbert space is a norm limit of reducible operators. In \cite{Voi2},  Voiculescu gave this problem an affirmative answer by his remarkable non-commutative Weyl-von Neumann theorem. 

In the paper, we investigate the analogous question for type ${\rm II}_1$ factors. First, we give a characterization of Murray and von Neumann's property $\Gamma$ for a type ${\rm II}_1$ factor in Theorem \ref{prop3.9-gamma}. By this characterization, we answer Problem 2.11 of \cite{Sherman} by proving equivalent formulations of a McDuff factor in Corollary \ref{Sherman}.  Then in Theorem \ref{prop5.3} we develop a spectral gap property for a single operator in a non-$\Gamma$ factor of type ${\rm II}_1$. Based on this spectral gap property,   we prove in Theorem \ref{main-goal} that, {\emph{in the operator norm topology}}, the set of reducible operators is \emph{nowhere} dense in each non-$\Gamma$ factor $\mathcal{M}$ of type ${\rm II}_1$, where \emph{separable} and \emph{non-separable} cases of $\mathcal{M}$ are both considered.
\end{abstract}


\maketitle

\section{Introduction}

Let $\mathcal{H}$ be a complex Hilbert space. Denote by $\mathcal{B}(\mathcal{H})$ the set of all bounded linear operators on $\mathcal{H}$. A \emph{von Neumann algebra} is a self-adjoint subalgebra of $\mathcal B(\mathcal H)$ that is closed in the weak operator topology and contains the identity of $ \mathcal B(\mathcal H) $. A \emph{factor} is a von Neumann algebra whose center consists of scalar multiples of the identity. Factors are further classified by  Murray and von Neumann into type I$_n$,  I$_\infty$,  II$_1$, II$_\infty$, and  III factors (see \cite[Section 6.5]{Kadison2}). By definition, $\mathcal B(\mathcal H)$ is a type I factor.

When $\mathcal H$ is separable, Halmos proved in \cite{Hal} that the set of irreducible operators in $ \mathcal B(\mathcal H) $ is a dense $G_\delta$ subset of $ \mathcal B(\mathcal H) $ in the operator norm topology. Recall that an operator $x\in  \mathcal B(\mathcal H) $ is \emph{reducible} if $x$ has nontrivial reducing closed subspaces. And $x\in  \mathcal B(\mathcal H) $ is \emph{irreducible} if $x$ has no nontrivial reducing closed subspaces, i.e., if $p$ is a projection in $\mathcal{B}(\mathcal{H})$ such that $px = xp$, then $p=0$ or $p=I$.

 Similarly, an element $x$ in a factor $\mathcal N$ is   {\em reducible} if there is a nontrivial projection $p$ in $\mathcal N$ such that $xp = px$. Furthermore, an element $x$ in   $\mathcal N$ is   {\em irreducible} if $x$ is not reducible in $\mathcal N$. Note that a single generator of a factor with separable predual is an irreducible operator.
Thus, in a factor with separable predual, there always exist irreducible operators (see \cite{ Pop, Wogen}).   Based on this, it is natural to consider Halmos' theorem from \cite{Hal} in the setting of factors with separable predual. The authors in \cite{Fang} proved that in each factor $\mathcal{N}$ with separable predual, the set of irreducible operators in $\mathcal{N}$ is operator-norm dense and $G_{\delta}$. 
Later, the author in \cite{Shirui1} proved that in each semifinite factor $\mathcal{N}$ with separable predual,  the set of irreducible operators in $\mathcal{N}$ is dense with respect to $\mathrm{max}\{ \Vert \cdot \Vert, \Vert \cdot \Vert_p \}$-norm for $p>1$, where the $\mathrm{max}\{ \Vert \cdot \Vert, \Vert \cdot \Vert_p \}$-norm works as an analogue of the Schatten $p$-norm on the set of finite-rank operators in $ \mathcal B(\mathcal H) $.

On the other hand, the eighth problem in \cite{Hal2} raised by Halmos is stated as follows.

\begin{HalmosProblem}\label{8th}
    On a separable Hilbert space, is every operator the norm limit of reducible ones?
\end{HalmosProblem}

On a finite-dimensional Hilbert space, the answer to the problem is negative, since the set of reducible operators is closed and nowhere dense in the operator norm topology (see \cite[Main theorem]{Hal} and \cite[p.919]{Hal2}). On a separable, infinite-dimensional Hilbert space, the problem was answered affirmatively by Voiculescu as an application of his non-commutative Weyl-von Neumann theorem in \cite{Voi2}.

Inspired by some recent research on irreducible operators in factors \cite{Fang}, normal operators in semi-finite factors \cite{Hadwin3,Li,Li2}, and similar operator-theoretic results for finite von Neumann algebras \cite{CDSZ,DNZ}, we investigate {\bf Problem 8} in type ${\rm II}_1$ factors in the current paper.

Let $\mathcal M$ be a factor of type ${\rm II}_1$ with    trace $\tau$. Through out the paper, we denote by  $\|\cdot\|$   the operator norm on $\mathcal M$, and by $\|\cdot\|_2$ the $2$-norm  on $\mathcal M$, i.e.,  $\|x\|_2=\sqrt {\tau (x^*x)}$ for all $x\in \mathcal M$. For elements $x$ and $y$ in $\mathcal M$, we denote by $[x,y]=xy-yx$ the \emph{commutator} of $x$ and $y$.

We will frequently mention Murray and von Neumann's property $\Gamma$ for type ${\rm II}_1$ factors (see \cite[Definition 6.1.1]{MvN}), since we 
develop techniques from von Neumann algebras to answer Halmos' {\bf Problem 8} negatively in type ${\rm II}_1$ factors without property $\Gamma$. 
Historically, 
property $\Gamma$ is the first invariant used by Murray and von Neumann in \cite{MvN}  to show the existence of non-hyperfinite type ${\rm II}_1$ factors, and it plays a critical role in Connes' celebrated paper \cite{Connes}.  

Recall that a type ${\rm II}_1$ factor $\mathcal{M}$ has \emph{property $\Gamma$ } if and only if {\em  for any finitely many elements $x_1,\ldots, x_n$   in $\mathcal M$ and  any $\varepsilon>0$, there exists a unitary element $u$ in $\mathcal M$ with $\tau(u)=0$ such that $\|[x_j, u]\|_2\le \varepsilon$ for all $1\le j\le n$.}   Notice that the original definition of property $\Gamma$ for a type ${\rm II}_1$ factor $\mathcal M$  does not require $\mathcal M$   having separable predual.
For simplicity, for  a type ${\rm II}_1$ factor $\mathcal M$ without property $\Gamma$, we say that $\mathcal M$ is \emph{{non-$\Gamma$}} (When $\mathcal M$ has separable predual, $\mathcal M$ is non-$\Gamma$ if and only if $\mathcal M$ is full \cite{Co74}).

The main purpose of this paper is to prove the following theorem.

\vspace{0.2cm} 
{T{\scriptsize HEOREM}} \ref{main-goal}.
\emph{Let $\mathcal{M}$ be a non-$\Gamma$ type $ {\rm II}_1$ factor. Then, in the operator norm topology, the set of reducible operators in $\mathcal M$ is nowhere dense and not closed in $\mathcal M$. }

\vspace{0.1in}

To prove Theorem \ref{main-goal}, we take four steps and prepare \Cref{thm3.1}, \Cref{prop5.3}, and \Cref{may14Thm5.14}, which are also of independent interest. 

\vspace{0.1in}

\noindent{\textbf{Step One.}} Inspired by Dixmier's ideas in \cite{Dixm}, we develop another characterization of property $\Gamma$ for type $ {\rm II}_1$ factors.

\vspace{0.2cm}
 {P{\scriptsize ROPOSITION}} \ref{prop3.9-gamma}. \emph{Let $\mathcal M$ be
a type ${\rm II}_1$ factor with   trace $\tau$. Then the following
statements are equivalent:
    \begin{enumerate}[label={\rm(\roman*)}]
        \item   $\mathcal M$ has property $\Gamma$ of Murray and von Neumann.
        \item   For every $x$ in $\mathcal M$, $W^*(x)'\cap \mathcal M^\omega $ is  diffuse.
        \item   For every $x$ in $\mathcal M$ and every nonzero projection $p$ in $\mathcal M$, $$W^*(p x p)'\cap (p\mathcal M p)^\omega \ne \mathbb C p. $$
        \item    For every $x $ in $\mathcal M$,  $W^*(x )'\cap \mathcal M^\omega \ne \mathbb C I$.
  \end{enumerate}
Here $W^*(x)$ is the von Neumann subalgebra generated by $x$ in $\mathcal M$ and $\omega$ is a free ultrafilter on the set $\mathbb N$ of natural numbers.}

\vspace{0.1in}

As a direct application of \Cref{prop3.9-gamma}, we prove a new characterization of property $\Gamma$ for type ${\rm II}_1$ factors in \Cref{thm3.1}.

\vspace{0.2cm} {T{\scriptsize HEOREM}} \ref{thm3.1}
\emph{   Let $\mathcal M$ be a type ${\rm II}_1$ factor with    trace $\tau$. Then $\mathcal M$ has property $\Gamma$ if and only if, for any element $x$  in $\mathcal M$ and any $\varepsilon>0$, there exists a unitary element $u$ in $\mathcal M$ such that $\tau(u)=0$ and $\|[x, u]\|_2\le \varepsilon$.}

\vspace{0.1in}

To illustrate the interest of \Cref{thm3.1}, it is worth remarking that in \cite[Section 3.2.2]{FHS} the authors showed that property $\Gamma$ can be axiomatized by a sequence of sentences $\{\sigma_n\}_{n\ge 1}$.  \Cref{thm3.1} shows that only the first sentence $\sigma_1$ is needed.

\vspace{0.1in}

\noindent{\textbf{Step Two.}} By Proposition \ref{prop3.9-gamma} and a lemma by Connes in \cite{Connes}, we obtain a single operator with spectral gap in each non-$\Gamma$ type ${\rm II}_1$ factor. 

\vspace{0.2cm} {T{\scriptsize HEOREM}} \ref{prop5.3}.
\emph{Let $\mathcal M$ be a {non-$\Gamma$} type ${\rm II}_1$  factor with   trace $\tau$. Then there exist two self-adjoint elements $x_1$, $x_2$ in $\mathcal M$ and a positive number $\alpha>0$ such that
  \begin{equation*}
    \|[x_1, e]\|_2+\|[x_2, e]\|_2\ge \alpha \|e\|_2\|I-e\|_2, \quad \text{ for every projection } \, e \in  \mathcal M
    .
  \end{equation*}
}

\vspace{0.1in}

Combining \Cref{prop5.3} and Marrakchi's Proposition 2.2 of \cite{Ma}, we obtain an operator-theoretic characterization of a {non-$\Gamma$} type ${\rm II}_1$ factor.

\vspace{0.2cm} {C{\scriptsize OROLLARY}} \ref{may17Cor4.12}.
\emph{
    Let $\mathcal M$ be a type ${\rm II}_1$ factor with   trace $\tau$. Then the following statements are equivalent:
    \begin{enumerate}[label={\rm (\roman*)}]
    \item   $\mathcal M$ is {non-$\Gamma$}, i.e., $\mathcal M$ fails to have property $\Gamma$.
    \item   There exist  two self-adjoint elements   $x_1$ and $x_2$ in $\mathcal M$ and an $\alpha_1>0$ such that
    \begin{equation*}
        \|[x_1,y ]\|_2+ \|[x_2,y ]\|_2 \ge   \alpha_1 \|y-\tau(y)\|_2, \quad \text { for every } y\in \mathcal
        M.
    \end{equation*}
    \item   There exist an $x$ in $\mathcal M$  and an $\alpha_2>0$  such that
    \begin{equation*}
        \|[x, y]\|_2+\|[x^*, y]\|_2\ge \alpha_2 \|y-\tau(y)\|_2, \qquad \text{ for every } y \in\mathcal
        M.
    \end{equation*}
    \item   There exist an $x$ in $\mathcal M$  and an $\alpha_3>0$  such that
    \begin{equation*}
        \|[x, y]\|_2\ge \alpha_3 \|y-\tau(y)\|_2, \qquad \text{ for every self-adjoint } y \in\mathcal
        M.
    \end{equation*}
    \item   There exist two unitary elements $u_1$ and $u_2$ in $\mathcal M$  and an $\alpha_4>0$  such that
    \begin{equation*}
        \|[u_1, y]\|_2 +\|[u_2, y]\|_2  \ge \alpha_4 \|y-\tau(y)\|_2, \qquad \text{ for every } y \in\mathcal
        M.
    \end{equation*}
    \end{enumerate}
}

\vspace{0.1in}

From Theorem 2.1 (c) of \cite{Connes}, in a type ${\rm II}_1$ factor $(\mathcal{M},\tau)$ without property $\Gamma$, there exist finitely many unitary operators $u_1,\ldots,u_k$ in $\mathcal{M}$ such that for each operator $y$ in $\mathcal{M}$ the averaging mapping $y \to u_1 y u_1^*+\cdots+u_k y u_k^*$ acts with spectral gap on the Hilbert space $L^2\mathcal{M},\tau)$. Precisely, there exist a finite subset $\{u_1,\ldots,u_k\}$ in $\mathcal{U}(\mathcal{M})$, the set of unitary operators in $\mathcal{M}$, and  a number $c>0$ such that
\begin{equation*}
    \sum_{1\le j \le k} \|[u_j, y]\|_2 \ge c \|y-\tau(y)\|_2, \quad \text{ for every } y \in\mathcal{M}.
\end{equation*}
By \Cref{may17Cor4.12}, one can always take $k=2$.

\vspace{0.1in}

\noindent{\textbf{Step Three.}}  A key observation, connecting the spectral gap property of an operator and the operator norm closure of reducible operators, is the following lemma.

\vspace{0.2cm} {L{\scriptsize EMMA}} \ref{may15Lemma5.6}
\emph{  Let $x_1$ and $x_2$ be self-adjoint elements in $\mathcal M$. If there exist a positive number $\alpha>0$ and a projection $p\in \mathcal M$ with $\tau(p)>0$, satisfying
    \begin{equation*}
        \|[x_1,p]\|_2+ \|[x_2,p]\|_2\ge \alpha \|p\|_2,
    \end{equation*}
    then
    \begin{equation*}
        \|[x_1,p]\| + \|[x_2,p]\| \ge \frac \alpha  {\sqrt 2}.
    \end{equation*} }

\noindent  Now the existence of elements that are not contained in the operator norm closure of reducible operators, in a non-$\Gamma$ type $\mathrm{II}_1$ factor is a combination of Theorem \ref{prop5.3} and Lemma \ref{may15Lemma5.6}.

\vspace{0.2cm} {T{\scriptsize HEOREM}} \ref{may14Thm5.14}
\emph{Let $\mathcal M$ be a non-$\Gamma$ type ${\rm II}_1$ factor. Then $\overline{\operatorname{Red}(\mathcal M)}^{\|\cdot\|}\ne \mathcal M$, where $\overline{\operatorname{Red}(\mathcal M)}^{\|\cdot\|}$ is the operator norm closure of  $\operatorname{Red}(\mathcal M)$ the set of reducible operators in $\mathcal M$.
}

\vspace{0.1in}

\noindent{\textbf{Step Four.}} Finally, based on Theorem \ref{may14Thm5.14},  Theorem  \ref{main-goal} is proved.

\vspace{0.1in}

We mention that there are nonseparable type ${\rm II}_1$ factors with property $\Gamma$ in which all operators are reducible  (see Example \ref{June10Example5.2}). On the other hand, in Proposition \ref{June28Prop5.2} we prove that, if a type ${\rm II}_1$ factor has separable predual, then the set of reducible operators is not closed in the operator norm topology.  It remains an open question whether, in a type ${\rm II}_1$ factor with separable predual and with property $\Gamma$, the set of reducible operators is dense in the operator norm topology.

This paper is organized as follows. In Section 2,  we recall
ultrapower algebras, central sequence algebras, and property
$\Gamma$ for type ${\rm II}_1$ factors. Some useful techniques are
also prepared. In Section 3, in the view of a single operator, we
prove a characterization of property $\Gamma$ for type ${\rm II}_1$
factors in Proposition \ref{prop3.9-gamma}. As an application of
Proposition \ref{prop3.9-gamma}, we provide an answer to Sherman's
question in Problem 2.11 of \cite{Sherman}, where we show equivalent
characterizations of McDuff factors. In Section 4,  the existence of a single operator with   spectral gap
in a non-$\Gamma$ type ${\rm II}_1$ factor  is shown in Theorem
\ref{prop5.3}. In Section 5, we prove in Theorem \ref{may14Thm5.14}
that reducible operators are not dense in non-$\Gamma$ type ${\rm
II}_1$ factors. In Section 6, we further prove in Theorem
\ref{main-goal} that reducible operators are nowhere dense in
non-$\Gamma$ type ${\rm II}_1$ factors with the techniques developed
in the preceding sections.

\section{Preliminaries and Notation}

In this section, we recall some background on ultrapowers and central sequence algebras for type ${\rm II}_1$ factors, which will be used in proving our characterization of property $\Gamma$.


Let $\mathcal M$ be a type ${\rm II}_1$  factor  with   trace $\tau$.  Let $\mathbb N$ be the set of all the natural numbers and $\omega\in\beta \mathbb N\setminus \mathbb N$  a fixed free ultrafilter over $\mathbb N$. Let
\begin{equation*}
    \ell^{\infty} ( \mathcal M) =\{(a_n)_n \ : \ \forall \ n\in \mathbb N, \  a_n\in\mathcal M \ \text { and } \ \sup_{n\in\mathbb N}
    \|a_n\|<\infty\},
\end{equation*}
and
\begin{equation*}
    \mathcal I_\omega( \mathcal M) =\{  (a_n)_n \in   \ell^{\infty} ( \mathcal M) \ : \ \lim_{n\rightarrow \omega } \|a_n\|_2
    =0\}.
\end{equation*}
Then $\mathcal I_\omega( \mathcal M) $ is a two sided ideal of $ \ell^{\infty} ( \mathcal M)$ and the ultrapower of $\mathcal M$ along $\omega$, denoted by $\M^{\omega}$, is defined to be
\begin{equation*}
    \mathcal M^\omega  = \ell^{\infty} ( \mathcal M)/ \mathcal I_\omega( \mathcal
    M).
\end{equation*}
If no confusion arises, an element in  $\mathcal M^\omega$ will be
denoted by $(a_n)_\omega$.  By \cite{Wright} or  \cite{Sak}, $\mathcal M^\omega$ is a type ${\rm II}_1$ factor with a
natural trace $\tau_\omega$ (also see    Theorem A.3.5 in
\cite{SS}).  If $\mathcal P$ is a von Neumann
subalgebra of $\mathcal M$, then we view $\mathcal P^\omega
\subseteq \mathcal M^\omega$ (see the discussion after Definition
A.4.1. in \cite{SS}). Moreover, there is a natural embedding from
$\mathcal M$ into $\mathcal M^\omega$ by sending each $a\in \mathcal
M$ to a constant sequence $(a, a, a,\ldots)_\omega$   in $\mathcal
M^\omega$. Thus, we view $\mathcal M\subseteq \mathcal M^\omega$. Let $\mathcal M_\omega = \mathcal M^{\prime} \cap \mathcal M^\omega$, which is called the \emph{central sequence algebra} of $\mathcal M^\omega$.

In the following lemma, item \ref{lemma2.1.i} is  Lemma A.5.5 of \cite{SS}. Item \ref{lemma2.1.ii} follows from \ref{lemma2.1.i}.

\begin{lemma}[Lemma A.5.5 of \cite{SS}]\label{lemma1}
    Let $\mathcal P$ be a type ${\rm II}_1$ factor with trace $\tau$ and $\mathcal M_k(\mathbb C)$ is a matrix algebra of size $k\in\mathbb N$. 
    \begin{enumerate}[label={\rm(\roman*)}]
        \item \label{lemma2.1.i}  $\displaystyle (\mathcal P\otimes \mathcal M_k(\mathbb C) )^\omega = \mathcal P^\omega \otimes \mathcal M_k(\mathbb C)$;
        \item \label{lemma2.1.ii}    If $\mathcal A$ is a von Neumann subalgebra of $\mathcal P$, then
        \begin{equation*}
            \displaystyle (\mathcal A\otimes \mathcal M_k(\mathbb C) )' \cap (\mathcal P\otimes
            \mathcal M_k(\mathbb C) )^\omega = ( \mathcal A'\cap \mathcal P^\omega )\otimes
            1_k,
    \end{equation*} 
    where $1_k$ is the identity of $\mathcal M_k(\mathbb C)$.
    \end{enumerate}
\end{lemma}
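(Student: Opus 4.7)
The plan is to establish part (i) by exhibiting an explicit $*$-isomorphism via matrix-entry extraction, and then to deduce part (ii) from a routine double-commutant computation against the matrix units of $\mathcal M_k(\mathbb C)$.

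For (i), I would fix a system of matrix units $\{e_{ij}\}_{i,j=1}^k$ in $\mathcal M_k(\mathbb C)$, so every $a\in \mathcal P\otimes \mathcal M_k(\mathbb C)$ has a unique expansion $a=\sum_{i,j} a^{(i,j)}\otimes e_{ij}$ with $a^{(i,j)}\in \mathcal P$ recovered from $a$ by a fixed slice map against the matrix units. Given a bounded sequence $(a_n)_n$, the entries $(a_n^{(i,j)})_n$ are bounded sequences in $\mathcal P$, and I define
\[
    \Phi\bigl((a_n)_\omega\bigr) \;:=\; \sum_{i,j=1}^k \bigl(a_n^{(i,j)}\bigr)_\omega \otimes e_{ij} \;\in\; \mathcal P^\omega\otimes \mathcal M_k(\mathbb C).
\]
The heart of the argument is the trace decomposition
\[
    \|a_n\|_2^2 \;=\; \frac{1}{k}\sum_{i,j=1}^k \bigl\|a_n^{(i,j)}\bigr\|_2^2,
\]
which follows because the trace on $\mathcal P\otimes \mathcal M_k(\mathbb C)$ is $\tau\otimes \mathrm{tr}_k$ with $\mathrm{tr}_k$ normalized. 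This identity shows simultaneously that $\Phi$ is well-defined (sequences in $\mathcal I_\omega(\mathcal P\otimes \mathcal M_k(\mathbb C))$ correspond exactly to entrywise $\omega$-null sequences in $\mathcal I_\omega(\mathcal P)$) and that $\Phi$ is injective. The $*$-homomorphism property is routine, since addition, adjoint, and multiplication of $k\times k$ matrices over $\mathcal P$ act on entries by finite sums and products that commute with passage to $\omega$-limits. Surjectivity is immediate: any $\sum_{i,j} x^{(i,j)}\otimes e_{ij}$ with $x^{(i,j)}=(x_n^{(i,j)})_\omega$ is $\Phi$ of the class of $a_n=\sum_{i,j}x_n^{(i,j)}\otimes e_{ij}$, which is uniformly bounded.

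For (ii), I would identify $(\mathcal P\otimes \mathcal M_k(\mathbb C))^\omega$ with $\mathcal P^\omega\otimes \mathcal M_k(\mathbb C)$ via $\Phi$. An element $y$ in $(\mathcal A\otimes \mathcal M_k(\mathbb C))'\cap (\mathcal P^\omega \otimes \mathcal M_k(\mathbb C))$ in particular commutes with $1\otimes \mathcal M_k(\mathbb C)$, so by the standard factor relation $(1\otimes \mathcal M_k(\mathbb C))'\cap (\mathcal P^\omega \otimes \mathcal M_k(\mathbb C))=\mathcal P^\omega\otimes 1_k$ it must take the form $y=z\otimes 1_k$ for a unique $z\in \mathcal P^\omega$. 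The remaining requirement that $y$ commute with $\mathcal A\otimes 1_k$ is then equivalent to $z\in \mathcal A'\cap \mathcal P^\omega$, which gives the asserted identification. The one non-cosmetic obstacle in the whole argument is bookkeeping around the ultrapower ideal in (i); once the trace decomposition above is recorded, both the tensor-identification and the relative-commutant computation reduce to finite-dimensional linear algebra applied entrywise.
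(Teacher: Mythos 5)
Your proof is correct and follows the standard route that the paper itself invokes without detail: the paper cites Lemma A.5.5 of \cite{SS} for part (i) and merely asserts that (ii) follows from (i), whereas you supply the explicit $*$-isomorphism via matrix-entry slices, the normalized-trace identity $\|a\|_2^2=\frac1k\sum_{i,j}\|a^{(i,j)}\|_2^2$ that controls the ultrapower ideal, and the relative-commutant computation $(1\otimes\mathcal M_k(\mathbb C))'\cap(\mathcal P^\omega\otimes\mathcal M_k(\mathbb C))=\mathcal P^\omega\otimes 1_k$ to get (ii). These are exactly the steps implicit in the cited reference, so this is a faithful filling-in rather than a different approach.
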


Recall that a type ${\rm II}_1$ factor $\mathcal{M}$ has property $\Gamma$ of Murray and von Neumann \cite{MvN} if and only if {\em  for any family $x_1,\ldots, x_n$ of finitely many elements in $\mathcal M$ and  any $\varepsilon>0$, there exists a unitary element $u$ in $\mathcal M$ with $\tau(u)=0$ such that $\|[x_i, u]\|_2\le \varepsilon$ for all $1\le i\le n$.  }

\begin{definition}
    For $x\in \mathcal M$ and $\mathcal S\subseteq \mathcal M$, we define a distance function as follows:
    \begin{equation*}
        \operatorname{dist}_{\|\cdot\|_2}(x, \mathcal S) =\inf \{\|x-y\|_2   : y\in\mathcal
        S\}.
    \end{equation*}
    Denote by $W^*(\mathcal S)$ the von Neumann subalgebra generated by $\mathcal S$ in $\mathcal M$ and by $C^*(\mathcal S)$ the unital $C^*$-subalgebra generated by $\mathcal S$ in $\mathcal M$. The relative commutant of $W^*(\mathcal S)$ in $\mathcal M$ is denoted by $W^*(\mathcal S)'\cap\mathcal M$.
\end{definition}

\begin{remark}
    It is obvious that if a  type ${\rm II}_1$ factor $\mathcal M$ has property $\Gamma$, then
    \begin{equation*}
     W^*(x_1,\ldots, x_n)'\cap \mathcal M^\omega\ne \mathbb CI
    \end{equation*}
    for any finitely many elements $x_1,\ldots, x_n$ in $\mathcal M$. If $\mathcal M$ has   separable predual, then $\mathcal M$ has property $\Gamma$ if and only if $\mathcal M'\cap \mathcal M^\omega \ne \mathbb CI$ {\rm (see \cite {Dixm})}.    It is worthwhile noting that there exist examples of (nonseparable) type ${\rm II}_1$ factors $\mathcal M$ with property $\Gamma$ such that $\mathcal M'\cap \mathcal M^\omega=\mathbb CI$ {\rm (see Proposition $3.7$ in \cite {FHS})}.
\end{remark}

Examples of (nonseparable) type ${\rm II}_1$ factors $\mathcal M$ with property $\Gamma$ and with $\mathcal M'\cap\mathcal M^\omega=\mathbb CI$ can also be found in the following proposition, which is a consequence of Popa's result in \cite{Popa2}.

\begin{proposition}
    Let $\mathcal N$ be a type ${\rm II}_1$ factor with separable predual and with property $\Gamma$. Let $\omega$  be a free ultrafilter on $\mathbb N$ and $\mathcal M=\mathcal N^\omega$ an ultrapower of $\mathcal N$ along   $\omega$. Then $\mathcal M$ is a type ${\rm II}_1$ factor  with property $\Gamma$ and with $\mathcal M'\cap \mathcal M^\omega=\mathbb CI$.
\end{proposition}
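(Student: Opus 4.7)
The plan is to verify the three assertions in sequence: that $\mathcal M=\mathcal N^{\omega}$ is a type ${\rm II}_1$ factor, that it has Property $\Gamma$, and that $\mathcal M'\cap\mathcal M^{\omega}=\mathbb C I$. The first is the standard ultrapower construction of Wright and Sakai already recorded in Section 2 of the paper.

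For Property $\Gamma$ of $\mathcal M$, I would argue by a direct diagonal construction. Given finitely many $X_1,\dots,X_n\in\mathcal M$, choose bounded representing sequences $X_i=(x_{i,k})_{\omega}$ with $x_{i,k}\in\mathcal N$. Property $\Gamma$ of $\mathcal N$ furnishes, for each $k\in\mathbb N$, a unitary $u_k\in\mathcal N$ with $\tau(u_k)=0$ and $\|[x_{i,k},u_k]\|_2\le 1/k$ for $i=1,\dots,n$. Then $U=(u_k)_{\omega}\in\mathcal M$ is a unitary with $\tau_{\omega}(U)=0$ and
\[
\|[X_i,U]\|_2=\lim_{k\to\omega}\|[x_{i,k},u_k]\|_2=0,
\]
so $U$ exactly commutes with each $X_i$ in $\mathcal M$, which is much stronger than Property $\Gamma$ demands.

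The third assertion is the heart of the proposition and is where the cited Popa result from \cite{Popa2} enters. The intuition is that although $\mathcal N^{\omega}$ has an abundance of unitaries almost commuting with $\mathcal N$ (which is what yielded Property $\Gamma$ above), the separability of the predual of $\mathcal N$ prevents $\mathcal N^{\omega}$ from carrying any \emph{non-scalar} sequences almost commuting with all of $\mathcal N^{\omega}$ itself. Concretely, given $T=(T_k)_{\omega}\in\mathcal M'\cap\mathcal M^{\omega}$ with $T_k\in\mathcal N^{\omega}$, I would pick a $\|\cdot\|$-dense sequence $(a_n)_{n\ge 1}$ in the unit ball of $\mathcal N$, write each $T_k=(t_{k,j})_{\omega}$ with $t_{k,j}\in\mathcal N$, and run a standard diagonal extraction over the three indices $(n,k,j)$ to produce a single bounded sequence $(s_m)_{m\ge 1}$ in $\mathcal N$ that represents $T$ (after suitable identification) and is asymptotically central in $\mathcal N$. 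Popa's rigidity/spectral-gap theorem in \cite{Popa2}, which makes essential use of separability of $\mathcal N$, then forces $(s_m)_{\omega}$ (and hence $T$) to be a scalar.

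The main obstacle is precisely this last rigidity step, which I would quote from \cite{Popa2} rather than reprove. The diagonal reduction preceding it is routine, and the conceptual punchline — that passing from $\mathcal N$ to $\mathcal N^{\omega}$ destroys Property $\Gamma$ at the level of the \emph{second} ultrapower even though $\mathcal N^{\omega}$ itself inherits Property $\Gamma$ — is exactly what produces the promised nonseparable examples of factors with Property $\Gamma$ and trivial central sequence algebra.
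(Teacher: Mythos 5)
Your handling of the first two assertions is fine: the factoriality is standard, and the diagonal construction of a trace-zero unitary $U=(u_k)_\omega$ that exactly commutes with finitely many given $X_1,\dots,X_n\in\mathcal N^\omega$ is the usual way to see that an ultrapower of a Property~$\Gamma$ factor again has Property~$\Gamma$.

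The proposed treatment of the third assertion, however, has a genuine gap, and it stems from two intertwined misreadings. First, the diagonal reduction you describe only arranges for the representing sequence $(s_m)$ to be asymptotically central \emph{in $\mathcal N$} (because the $a_n$ you control against were taken from $\mathcal N$). But asymptotic centrality in $\mathcal N$ is far too weak a conclusion: since $\mathcal N$ is a separable Property~$\Gamma$ factor, $\mathcal N'\cap\mathcal N^\omega$ is diffuse (this is Dixmier's theorem, quoted in the paper), so there is an abundance of nonscalar sequences with that property. The content of the proposition is precisely that $T$ commutes with the much larger, nonseparable algebra $\mathcal M=\mathcal N^\omega$, and your reduction to a countable $\|\cdot\|_2$-dense set in $\mathcal N$ (the unit ball of $\mathcal N$ is never $\|\cdot\|$-separable, incidentally) discards exactly that extra information. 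Second, the Popa result in \cite{Popa2} is not a rigidity or spectral-gap theorem; it is an \emph{asymptotic freeness} theorem for separable Property~$\Gamma$ factors: it produces a sequence of trace-zero unitaries $u_n\in\mathcal N$ with $\tau(u_nb_1u_n^*b_2)\to\tau(b_1)\tau(b_2)$ for all $b_1,b_2\in\mathcal N$. A spectral-gap statement for $\mathcal N$ would contradict $\mathcal N$ having Property~$\Gamma$ in the first place.

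The paper's actual argument runs in the opposite direction from yours. Rather than trying to show that every $T\in\mathcal M'\cap\mathcal M^\omega$ is scalar by squeezing it through $\mathcal N$, the paper assumes a nontrivial projection $(P_m)_\omega\in\mathcal M'\cap\mathcal M^\omega$ of trace $\lambda\in(0,1)$ exists, lifts it to projections $p_n^{(m)}\in\mathcal N$ of trace $\lambda$, and then uses Popa's asymptotically free unitaries $\{u_n\}$ to build a single unitary $V=(u_{k_n})_\omega\in\mathcal M$ via a diagonal choice of indices so that $\|[V,P_m]\|_{2}^2=2\lambda(1-\lambda)>0$ \emph{uniformly in $m$}. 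That uniform lower bound is what contradicts centrality of $(P_m)_\omega$. So the separability of $\mathcal N$ is exploited not to run a diagonal reduction on $T$, but to invoke Popa's freeness theorem and to make the diagonal choice of $k_n$ catch up with all $m\le n$ simultaneously.
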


\begin{proof}
    It is known that $\mathcal M=\mathcal N^\omega$ is a type ${\rm II}_1$ factor (see Theorem A.3.5 in \cite {SS}). It is straightforward to verify that $\mathcal M$ has property $\Gamma$. We need only to show that $\mathcal M'\cap \mathcal M^\omega=\mathbb CI$.

    The traces on $\mathcal N$, $\mathcal M$, and $\mathcal M^\omega$ will be  denoted by $\tau_{\mathcal N}$, $\tau_{\mathcal M}$, and $\tau_{\mathcal M^\omega}$ respectively. The 2-norms induced by the corresponding traces on $\mathcal N$, $\mathcal M$
     and $\mathcal M^\omega$  will be  denoted by $ \|\cdot\|_{2,\mathcal N}$, $ \|\cdot\|_{2,\mathcal M}$
     and $ \|\cdot\|_{2,\mathcal M^\omega}$ respectively. Elements in  $\mathcal N$, $\mathcal M$ and $\mathcal M^\omega$  will be
      denoted by $x$, $X$ or $(x_n)_\omega$, and $(X_m)_\omega$ respectively if there is no confusion.

    Suppose that  $\mathcal M'\cap \mathcal M^\omega\ne \mathbb CI$ and $(P_m)_\omega$ is a nontrivial projection in $\mathcal M'\cap \mathcal M^\omega$. Let $\lambda = \tau_{\mathcal M^\omega}((P_m)_\omega)$. Then $0<\lambda<1$. By Theorem A.5.3 in \cite{SS}, we assume that each  $P_m$ is a projection in $\mathcal M$ with $\tau_{\mathcal M}(P_m)=\lambda $ for   $m\ge 1$. As $P_m=(p_n^{(m)})_\omega$ is in $\mathcal M=\mathcal N^\omega$, we further assume that each $p_n^{(m)}$ is a projection in $\mathcal N$ with $\tau_{\mathcal N}(p_n^{(m)})=\lambda$ for $n,m\ge 1$.

    By Corollary on page 187 in \cite{Popa2}, there exists a family $\{u_n\}_{n=1}^\infty$ of unitary elements in $\mathcal N$ such that $\tau_{\mathcal N}(u_n)=0$ and
    \begin{equation}
        \lim_{n\rightarrow \infty}
        \tau_{\mathcal N}(u_nb_1u_n^*b_2)=\tau_{\mathcal N}(b_1)\tau_{\mathcal N}(b_2), \qquad \forall \ b_1,b_2\in\mathcal N.\label{Jun20Equa2.1}
    \end{equation}
    For each $n\ge 1$, by Equation (\ref{Jun20Equa2.1}), we let $k_n$ be a positive integer such that
    \begin{equation*}
        |\tau_{\mathcal N}(u_{k_n}p_n^{(m)}u_{k_n}^*p_n^{(m)}) -(\tau_{\mathcal N}(p_n^{(m)}))^2 |= |\tau_{\mathcal N}(u_{k_n}p_n^{(m)}u_{k_n}^*p_n^{(m)}) -\lambda^2|\le 1 /n, \ \ \forall \ 1\le m\le
        n.
    \end{equation*}
    So, for $1\le m\le n$,
    \begin{equation*}
        | \|[u_{k_n}, p_n^{(m)}]\|_{2,\mathcal N}^2 -(2\lambda -2\lambda^2)|= |2\tau_{\mathcal N} (p_n^{(m)})-2\tau_{\mathcal N}(u_{k_n}p_n^{(m)}u_{k_n}^*p_n^{(m)}) -(2\lambda -2\lambda^2)| \le
        2/n.
    \end{equation*}
    Let $V=(u_{k_n})_\omega$ be a unitary element in $\mathcal M=\mathcal N^\omega$. Then
    \begin{equation*}
        \|[V, P_m]\|_{2,\mathcal M}^2 =\lim_{n\rightarrow \omega}\|[u_{k_n}, p_n^{(m)}]\|_{2,\mathcal N}^2=2\lambda -2\lambda^2 >0, \ \ \forall \ m\ge 1.
            \end{equation*}
    This contradicts the assumption that $(P_m)_\omega$ is  in $\mathcal M'\cap \mathcal M^\omega$. Hence $\mathcal M'\cap \mathcal M^\omega=\mathbb CI$.
\end{proof}

The next lemma is well-known. We include its proof for completeness.

\begin{lemma}\label{lemma2.2}
    Let $\mathcal M$ be a type ${\rm II}_1$ factor with    trace $\tau$. Suppose that $p$ is a nonzero projection in $\mathcal M$. Then $\mathcal M$ has property $\Gamma$ if and only if $p\mathcal Mp$ has property $\Gamma$.
\end{lemma}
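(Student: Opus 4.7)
The plan is to prove the two implications in the order $\Gamma(\mathcal M) \Rightarrow \Gamma(p\mathcal Mp)$ first, then $\Gamma(p\mathcal Mp) \Rightarrow \Gamma(\mathcal M)$, since the latter will invoke the former inside the smaller corner $p\mathcal Mp$.

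For $(\Rightarrow)$, I would work in the ultrapower $\mathcal M^\omega$. Dixmier's characterization of Property $\Gamma$ produces a unitary $U \in \mathcal M' \cap \mathcal M^\omega$ with $\tau_\omega(U) = 0$. Since $U$ commutes with $p \in \mathcal M$, the element $V := Up$ is a unitary in $p\mathcal M^\omega p = (p\mathcal Mp)^\omega$ (indeed $VV^* = UU^*p = p$), and it commutes with every $x \in p\mathcal Mp$; hence $V \in (p\mathcal Mp)' \cap (p\mathcal Mp)^\omega$. To show $V$ is not a scalar multiple of $p$, I would invoke the uniqueness of the trace on the factor $\mathcal M$: the functional $x \mapsto \tau_\omega(Ux)$ is tracial on $\mathcal M$ (since $U \in \mathcal M'$ gives $\tau_\omega(Uxy) = \tau_\omega(xUy) = \tau_\omega(Uyx)$), hence a scalar multiple of $\tau$, and the scalar is $\tau_\omega(U) = 0$; therefore $\tau_\omega(V) = \tau_\omega(Up) = 0$. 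So $V$ is a trace-zero unitary in $(p\mathcal Mp)' \cap (p\mathcal Mp)^\omega$, witnessing $\Gamma(p\mathcal Mp)$ via Dixmier.

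For $(\Leftarrow)$, I would pick an integer $n \ge 1/\tau(p)$ and a subprojection $q \le p$ with $\tau(q) = 1/n$, together with matrix units $\{e_{ij}\}_{i,j=1}^n \subset \mathcal M$ satisfying $e_{11} = q$, so that $\mathcal M \cong q\mathcal Mq \otimes M_n(\mathbb C)$. Since $q\mathcal Mq = q(p\mathcal Mp)q$ is a nonzero corner of $p\mathcal Mp$, the $(\Rightarrow)$ direction already proved, applied inside $p\mathcal Mp$ with $q$ in place of $p$, yields $\Gamma(q\mathcal Mq)$. Then given a finite set $F \subset \mathcal M$ and $\epsilon > 0$, I would expand each $x \in F$ as $x = \sum_{i,j=1}^n x_{ij} \otimes e_{ij}$ with $x_{ij} := e_{1i}xe_{j1} \in q\mathcal Mq$, apply $\Gamma(q\mathcal Mq)$ to the finite family of all $x_{ij}$'s to obtain a unitary $u_0 \in q\mathcal Mq$ with $\tau_{q\mathcal Mq}(u_0) = 0$ and $\|[u_0, x_{ij}]\|_{2, q\mathcal Mq}$ sufficiently small, and set $u := \sum_i e_{i1} u_0 e_{1i}$. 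A routine calculation gives $\tau_{\mathcal M}(u) = 0$ and $\|[u, x]\|_2 < \epsilon$ for every $x \in F$.

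The hardest step is the trace-vanishing identity $\tau_\omega(Up) = 0$ in $(\Rightarrow)$: a unitary $u \in \mathcal M$ with $\tau(u) = 0$ that merely approximately commutes with $p$ need not have $|\tau(up)|$ small (take $u = 2p - I$ with $\tau(p) = 1/2$), so the full hypothesis that $U$ commutes with \emph{all} of $\mathcal M$ is indispensable. Additionally, Dixmier's characterization $\Gamma(\mathcal M) \Leftrightarrow \mathcal M' \cap \mathcal M^\omega \ne \mathbb C I$ requires separable predual, whereas the lemma is stated in full generality; the non-separable case can be handled either by reducing to a separable subfactor containing $p$ and the finite data of interest, or by unwinding the ultrapower argument into a finitary statement via amplification to $\mathcal M \otimes M_N(\mathbb C)$ for $N$ large and a Ces\`aro-type averaging over many disjoint copies of $p$ to force $\tau(pup)$ small.
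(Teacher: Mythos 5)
Your argument is correct when $\mathcal M$ has separable predual, and there it is a cleaner route than the paper's: the paper handles that case by citing Pimsner--Popa (Proposition~1.11 of their paper) and, inside the non-separable reduction, Connes' Lemma~2.5, whereas you give a short self-contained ultrapower proof of $(\Rightarrow)$. The key step --- that $x \mapsto \tau_\omega(Ux)$ is a trace on the factor $\mathcal M$, hence proportional to $\tau$ with constant $\tau_\omega(U) = 0$, so $\tau_\omega(Up) = 0$ --- is exactly the right observation, and you correctly flag that approximate commutation alone would not control $|\tau(up)|$. Your $(\Leftarrow)$ direction, cutting to a corner $q\mathcal Mq$ with $\tau(q)=1/n$, expanding $x$ against the matrix units, and transporting the unitary via $u_0\mapsto\sum_i e_{i1}u_0 e_{1i}$, is the same argument the paper obtains by citing Theorem~13.4.5 of Sinclair--Smith, just written out by hand.

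The genuine gap is the non-separable case, which you identify but do not close. Dixmier's equivalence $\Gamma(\mathcal M) \Leftrightarrow \mathcal M' \cap \mathcal M^\omega \neq \mathbb C I$ truly fails without separable predual (the paper's Remark 2.3 records non-separable factors with Property $\Gamma$ yet $\mathcal M'\cap\mathcal M^\omega=\mathbb CI$), so your ultrapower proof of $(\Rightarrow)$ does not apply as stated. Your first suggested repair --- passing to a separable subfactor containing $p$ and the finite data --- is indeed what the paper does, but it conceals a non-trivial ingredient: one needs a separable subfactor $\mathcal M_1$ with $\{p, x_1,\ldots,x_n\}\subset \mathcal M_1\subset\mathcal M$ that \emph{itself} has Property $\Gamma$. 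The paper obtains this from Proposition~7.1 of Christensen--Pop--Sinclair--Smith; without such a separable-reduction result preserving $\Gamma$, the sketch does not go through. Your second alternative (amplification plus averaging over many disjoint copies of $p$) is too vague to evaluate and does not obviously avoid the same difficulty. So: correct and elegant for the separable case, correct diagnosis of the obstruction in the non-separable case, but the crucial step that implements the reduction is missing.
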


\begin{proof} When $\mathcal M$ has separable predual,   the result can be found in Proposition 1.11 of \cite{PimPopa}. Now we assume that $\mathcal M$ has nonseparable predual.

      (i). Suppose that  $\mathcal{M}$ has property $\Gamma$. Let $x_1,\ldots, x_n$ be   in $p\mathcal M p$ and $\varepsilon>0$.  By Proposition 7.1 in \cite{CPSS}, there exists a  subfactor $\mathcal M_1$, with separable predual and with property $\Gamma$, such that $ \{p, x_1,\ldots, x_n, I\}\subseteq  \mathcal M_1\subseteq \mathcal M$. Then $p \M_1 p$ has property $\Gamma$ by Lemma 2.5 of \cite{Connes}. So there exists a unitary $u$ in $p\mathcal M_1p\subseteq p\mathcal Mp$, with $\tau(u)=0$, such that $\|[x_i, u]\|_2\le \varepsilon$ for $1\le i\le n$. By definition, $p\mathcal M p$ has property $\Gamma$.

          (ii). Assume that $p\mathcal M p$ has property $\Gamma$. Let $n\in\mathbb N$ and $q$ be a subprojection of $p$ such that $\tau(q)=1/n$. By part (i), $q\mathcal M q$ has property $\Gamma$. Notice that $\mathcal{M}$ is $\ast$-isomorphic to the von Neumann algebra tensor product $q\mathcal M q \otimes \mathcal M_n(\mathbb C)$, which is denoted by $\mathcal M\cong q\mathcal M q \otimes \mathcal M_n(\mathbb C)$. From Theorem 13.4.5 of \cite{SS}, it follows that $\mathcal M$ has property $\Gamma$.
\end{proof}

A  quick consequence of spectral theory is needed in the paper and its proof is sketched.

\begin{lemma}\label{lemma5.2}
    Let $\{p_i\}_{i=1}^n$ be a family of mutually orthogonal projections in $\mathcal M$ such that $p_1+ \cdots + p_n=I$. Suppose that $\{x_i\}_{i=1}^n$ is a family of elements in $\mathcal M$ satisfying
    \begin{enumerate}[label={\rm(\roman*)}]
        \item   $x_i$ is in $p_i\mathcal Mp_i$ for each $1\le i \le n$,
        \item   as an operator in  $p_i\mathcal Mp_i$,    $x_i$ is self-adjoint and invertible for each $1\le i\le n$, and
        \item   $\sigma_{p_i\mathcal Mp_i}(x_i) \cap \sigma_{p_j\mathcal Mp_j}(x_j)=\emptyset$, $\forall \ 1\le i\ne j\le n$, i.e., the spectra of $x_i$ and $x_j$ are pairwise disjoint for $i\neq j$.
    \end{enumerate}
    If
    $
        x=x_1+ x_2+ \cdots + x_n,
   $
    then
    \begin{equation*}
        \{p_1,\ldots, p_n, x_1,\ldots, x_n\}\subseteq  C^* (x)\subseteq W^*(x).
    \end{equation*}
\end{lemma}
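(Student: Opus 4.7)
The plan is to identify $x = x_1 + \cdots + x_n$ as a self-adjoint element of $\M$ whose spectrum decomposes as a disjoint union of the spectra of the pieces $x_i$, and then use continuous functional calculus to extract each $p_i$ from $x$. Once the projections $p_i$ lie in $\mathrm{C}^*(x)$, the $x_i = p_i x p_i$ follow automatically.

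First I would observe that, since each $x_i$ is self-adjoint in $p_i \M p_i$ and the $p_i$ are mutually orthogonal, $x$ is self-adjoint in $\M$, and for every polynomial $q \in \CCC[t]$ with $q(0)=0$ we have $q(x) = \sum_i q(x_i)$ (with $x_i^k \in p_i \M p_i$ using $p_i$ as the local identity). The next step is to prove
\begin{equation*}
\sigma_{\M}(x) \;=\; \bigsqcup_{i=1}^n \sigma_{p_i \M p_i}(x_i).
\end{equation*}
For the inclusion $\supseteq$, if $\lambda \notin \sigma_{\M}(x)$ and $z \in \M$ inverts $x - \lambda I$, then compressing by $p_j$ gives $(p_j z p_j)(x_j - \lambda p_j) = p_j = (x_j - \lambda p_j)(p_j z p_j)$, contradicting $\lambda \in \sigma_{p_j \M p_j}(x_j)$. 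For $\subseteq$, if $\lambda$ lies outside every $\sigma_{p_i \M p_i}(x_i)$, each $x_i - \lambda p_i$ is invertible in $p_i \M p_i$ with some inverse $y_i \in p_i \M p_i$, and orthogonality of the $p_i$ kills all cross terms in $(\sum_i y_i)(x - \lambda I)$, producing $I$.

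Next, hypothesis (iii) makes $\sigma_{\M}(x)$ a disjoint union of $n$ compact subsets of $\RRR$. Hence for each $i$ the indicator function $\chi_i$ of $\sigma_{p_i \M p_i}(x_i)$, restricted to $\sigma_{\M}(x)$, is continuous (its support is clopen in $\sigma_{\M}(x)$), so by continuous functional calculus $\chi_i(x) \in \mathrm{C}^*(x)$. Evaluating corner by corner via the polynomial identity above (and passing to uniform limits on each $\sigma_{p_i \M p_i}(x_i)$) yields $\chi_i(x) = \sum_j \chi_i(x_j) = p_i$. Therefore $p_i \in \mathrm{C}^*(x)$, and then $x_i = p_i x p_i \in \mathrm{C}^*(x)$ as well. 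The inclusion $\mathrm{C}^*(x) \subseteq W^*(x)$ is automatic from the definitions.

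The only point requiring real care is the spectrum computation, because each corner $p_i \M p_i$ has its own identity $p_i$ and the word ``invertible'' must be interpreted accordingly; assumption (ii) guarantees $0 \notin \sigma_{p_i \M p_i}(x_i)$, so hypothesis (iii) really is the statement that all $n$ local spectra are pairwise disjoint compact sets, which is exactly what functional calculus needs in order to separate the spectral projections of $x$.
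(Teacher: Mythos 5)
Your proposal is correct and follows essentially the same route as the paper: both arguments observe $x$ is self-adjoint, identify the indicator functions of the (clopen) pieces $\sigma_{p_i\mathcal Mp_i}(x_i)$ as continuous functions on $\sigma(x)$, and apply continuous functional calculus to get $p_i = \chi_i(x) \in {\rm C}^*(x)$, after which $x_i = p_ixp_i$ follows. The only difference is that you spell out the spectrum decomposition $\sigma_{\mathcal M}(x) = \bigsqcup_i \sigma_{p_i\mathcal Mp_i}(x_i)$ explicitly, which the paper leaves implicit.
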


\begin{proof}
    Since $x_i$ is self-adjoint for $1\leq i \leq n$, it follows that $x$ is self-adjoint. Note that the spectra of $x_i$ and $x_j$ are pairwise disjoint for $1\leq i\neq j \leq n$. Define continuous functions $f_{i}(t)$ on $\sigma( x)$ as follows:
    \begin{equation*}
        f_{i}(t)=\begin{cases}
            1,& t\in \sigma_{p_i\mathcal Mp_i}(x_i);\\
            0,& t\in \sigma( x)\backslash \sigma_{p_i\mathcal Mp_i}(x_i).
        \end{cases}
    \end{equation*}
    That $x_i$ is invertible in  $p_i\mathcal Mp_i$ entails $p_i=f_i(x)\in  C^* (x)$ for $1\leq i \leq n$. Moreover, $x_i=p_i x p_i$ belongs to $ C^* (x)$ for $1\leq i \leq n$. This completes the proof.
\end{proof}

\section{A characterization of property \texorpdfstring{$\Gamma$} \ \ for type \texorpdfstring{${\rm II}_1$}\ \   factors}

Let $\mathcal M$ be a type ${\rm II}_1$ factor with trace $\tau$. It is an open question whether a type ${\rm II}_1$ factor with separable predual is generated by a single operator. When $\mathcal M$ is singly generated,  the following Theorem \ref{thm3.1} is a direct consequence of the definition of property $\Gamma$.  The main goal of this section is to provide an equivalent characterization of property $\Gamma$ for type ${\rm II}_1$ factors without the assumption on the cardinality of generators.

\begin{theorem} \label{thm3.1}
    Let $\mathcal M$ be a type ${\rm II}_1$ factor with    trace $\tau$. Then $\mathcal M$ has property $\Gamma$ if and only if, for any element $x$  in $\mathcal M$ and any $\varepsilon>0$, there exists a unitary element $u$ in $\mathcal M$ such that $\tau(u)=0$ and $\|[x, u]\|_2\le \varepsilon$.
\end{theorem}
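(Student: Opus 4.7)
The forward direction is immediate: it is simply the definition of Property $\Gamma$ restricted to the single-element family $\{x\}$.

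For the backward direction, the plan is to pass to the ultrapower $\mathcal M^\omega$ and reduce the finite-family case to the single-element hypothesis via an encoding. Given $x_1, \ldots, x_n \in \mathcal M$ and $\epsilon > 0$, I would first construct a single self-adjoint operator $y \in \mathcal M$ such that $x_1, \ldots, x_n \in W^*(y)$. Once such a $y$ is in hand, the hypothesis applied to $y$ with $\epsilon$ replaced by $1/k$ produces trace-zero unitaries $u_k \in \mathcal M$ with $\|[y, u_k]\|_2 < 1/k$. In the ultrapower, the class $U = (u_k)_\omega$ is a trace-zero unitary commuting with $y$, and since $W^*_{\mathcal M^\omega}(y) = W^*_{\mathcal M}(y)$, the double commutant theorem forces $U$ to commute with every element of $W^*(y)$, in particular with each $x_i$. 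Therefore $\lim_{k \to \omega}\|[u_k, x_i]\|_2 = 0$ for every $i$, and along the ultrafilter one may select a single index $k$ satisfying $\|[u_k, x_i]\|_2 \le \epsilon$ for all $i$; this $u_k$ is the required trace-zero unitary.

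The construction of $y$ is the heart of the argument and proceeds through a matrix-unit encoding. Choose orthogonal projections $p_1, \ldots, p_N \in \mathcal M$ summing to $I$ with equal trace $1/N$, together with matrix units $\{e_{ij}\}$ satisfying $e_{ii} = p_i$. Under the identification $\mathcal M \cong p_1 \mathcal M p_1 \otimes M_N(\mathbb C)$, each $x_k$ decomposes as $x_k = \sum_{i,j} e_{i1} a^{(k)}_{ij} e_{1j}$ with entries $a^{(k)}_{ij} = e_{1i} x_k e_{j1} \in p_1 \mathcal M p_1$. One arranges translates of the finitely many entries $\{a^{(k)}_{ij}\}$ into pairwise orthogonal corners of $\mathcal M$ with pairwise disjoint spectra; Lemma \ref{lemma5.2} then ensures that the resulting self-adjoint element recovers each entry and each supporting projection via continuous functional calculus, i.e.\ in ${\rm C}^{\ast}(y)$. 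Adjoining an element that encodes the off-diagonal matrix-unit structure then yields a single $y$ with $\{p_i, e_{ij}, a^{(k)}_{ij}\} \subseteq W^*(y)$, from which $x_k \in W^*(y)$ follows.

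The main obstacle is precisely this construction. If $\mathcal M$ were known to be singly generated, a generator of $\mathcal M$ would suffice immediately, but whether every $\mathrm{II}_1$ factor with separable predual is singly generated is a long-standing open problem, and the theorem covers the non-separable case as well. The point of the argument is that one need not generate all of $\mathcal M$: only the finitely many block entries and the matrix units needed to reassemble the $x_k$ must lie in $W^*(y)$. Packaging this diagonal spectral data together with the off-diagonal matrix-unit structure into a single self-adjoint element, while keeping the hypothesis at hand applicable, is the delicate technical step; it combines the disjoint-spectra separation of Lemma \ref{lemma5.2} with a careful use of the matrix-unit expansion in $\mathcal M \cong p_1 \mathcal M p_1 \otimes M_N(\mathbb C)$.
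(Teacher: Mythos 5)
Your high-level strategy---reduce the $n$-element definition of Property $\Gamma$ to the single-element hypothesis by packaging $x_1,\ldots,x_n$ inside one operator, then pass to the ultrapower---is exactly the framework the paper uses (Theorem \ref{thm3.1} is deduced from Proposition \ref{prop3.7}). However, the specific reduction you propose, namely constructing a single $y\in\mathcal M$ with $x_1,\ldots,x_n\in W^*(y)$ \emph{exactly}, is a genuine gap, and in fact cannot be carried out by any known method.

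To see the difficulty in full strength: if $x_1,\ldots,x_n$ generate $\mathcal M$, then $x_1,\ldots,x_n\in W^*(y)$ forces $W^*(y)=\mathcal M$, so your packaging claim would prove that every finitely generated ${\rm II}_1$ factor is singly generated. That is precisely the Generator Problem, which is open; you cannot assume it. Your defense (``one need not generate all of $\mathcal M$'') does not rescue the argument, because the block entries $a^{(k)}_{ij}$ together with the matrix units $e_{ij}$ already recover each $x_k$ by reassembly, so putting all of them in $W^*(y)$ still forces $x_k\in W^*(y)$. The concrete packaging you sketch also fails arithmetically: under $\mathcal M\cong p_1\mathcal Mp_1\otimes M_N(\mathbb C)$ there are $nN^2$ entries $a^{(k)}_{ij}\in p_1\mathcal Mp_1$ but only $N$ diagonal corners, so you cannot place translates of all of them in pairwise orthogonal corners without first replacing each $a^{(k)}_{ij}$ by a $*$-isomorphic copy in a strictly smaller corner---and then Lemma \ref{lemma5.2} recovers the copy, not $a^{(k)}_{ij}$ itself, so the reassembly of $x_k$ breaks down.

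The paper circumvents this obstruction by abandoning exactness. The implication \ref{prop3.8.ii}$\Rightarrow$\ref{prop3.8.i} of Proposition \ref{prop3.7} builds, for each block entry $y_r$, only an \emph{$\epsilon$-approximation} $w_r$ lying in a singly generated algebra $W^*(a+ib_3)=W^*(q_1,\ldots,q_{9n},\mathcal N_0,\ldots,\mathcal N_{9n})$. The approximation is produced by Lemma \ref{lemma6}, which invokes Shen's Cut-and-Paste theorem, and the reason the resulting projections $q_r$ can be squeezed into mutually orthogonal small corners (so that the final algebra is singly generated via Lemma \ref{lemma7}) is precisely the hypothesis that $W^*(x,\mathcal A)'\cap\mathcal M^\omega$ is diffuse---the hypothesis is being used \emph{during} the packaging step, not just afterwards as in your outline. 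Since the eventual target is only an approximate central unitary, the $\epsilon$-approximation suffices, whereas your plan demands exact containment, which is both unnecessary and unavailable. Finally, note that your single-element hypothesis directly yields only condition \ref{prop3.8.iv} of Proposition \ref{prop3.7} ($W^*(x)'\cap\mathcal M^\omega\ne\mathbb CI$), not the diffuseness in \ref{prop3.8.ii}, so you would also need the intermediate implications \ref{prop3.8.iv}$\Rightarrow$\ref{prop3.8.iii}$\Rightarrow$\ref{prop3.8.ii}, which require the partial-isometry case analysis and Lemma \ref{may20Lemma2.5}; these do not appear in your sketch.
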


The proof of Theorem \ref{thm3.1} is postponed until after a few technical lemmas. We start with a definition of the support of an operator with respect to a family of mutually orthogonal projections.

\begin{definition} \label{def3}
    Let $x\in \M$ and $\{p_i\}_{i=1}^k\subseteq\M$ be a family of mutually orthogonal equivalent projections with $p_1+\cdots + p_k=I$. Define
    \begin{equation*}
        \operatorname{supp}(x, \{p_i\}^{k}_{i=1}) = \bigvee \{p_i : p_ix\ne 0 \ \text { or }  \ xp_i \ne 0, \ 1\leq i \leq k
        \}.
    \end{equation*}
\end{definition}

\begin{lemma}\label{lemma3.4}
    Let $k\in\mathbb N$. Suppose that $\mathcal A$ is a type ${\rm I}_k$ subfactor of $\mathcal M$  and $\{e_{ij}\}_{i,j=1}^k$ is a system of matrix units of $\mathcal A$. Let $q_1$ be a projection in $\mathcal M$.

    If $\{i_1,\ldots, i_{\ell}\}\subseteq \{1,\ldots, k\}$ with $\ell \ge k\cdot\tau(\operatorname{supp}(q_1, \{e_{ii}\}_{i=1}^k))$, then there exists a projection  $q\in \mathcal M$ such that
    \begin{enumerate}[label={\rm(\roman*)}]
        \item   $\operatorname{supp}(q, \{e_{ii}\}_{i=1}^k) \le  e_{i_1i_1}+\cdots + e_{i_{\ell} i_{\ell}}$ and
        \item   $W^*(q_1, \mathcal A)= W^*(q , \mathcal A)$.
    \end{enumerate}
\end{lemma}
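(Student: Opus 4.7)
The plan is to produce $q$ as a unitary conjugate of $q_1$ by a \emph{permutation unitary} drawn from $\mathcal{A}$ itself. Because such a conjugating unitary lies in $\mathcal{A}$, property (ii) will be automatic: $W^*(q, \mathcal{A}) = W^*(u^* q_1 u, \mathcal{A}) = W^*(q_1, \mathcal{A})$. Thus the whole task reduces to choosing the permutation so that property (i) holds.

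First I would unpack the support. Since $q_1$ and each $e_{ii}$ are self-adjoint projections, $e_{ii} q_1 = 0$ if and only if $q_1 e_{ii} = 0$, so
\begin{equation*}
    \operatorname{supp}(q_1, \{e_{ii}\}_{i=1}^k) \;=\; \sum_{j \in J} e_{jj}, \qquad J \;:=\; \{\, j \in \{1,\ldots,k\} : e_{jj} q_1 \neq 0 \,\}.
\end{equation*}
Each $e_{jj}$ has trace $1/k$, hence $|J| = k\,\tau\bigl(\operatorname{supp}(q_1, \{e_{ii}\}_{i=1}^k)\bigr)$, and the hypothesis $l \ge k\,\tau(\operatorname{supp}(q_1,\{e_{ii}\}))$ becomes the clean combinatorial inequality $|J| \le l$.

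Since $|J| \le |\{i_1,\ldots,i_l\}|$, I would pick any injection $\iota : J \hookrightarrow \{i_1,\ldots,i_l\}$ and extend $\iota^{-1}$ to a permutation $\pi$ of $\{1,\ldots,k\}$; this ensures $\pi^{-1}(J) \subseteq \{i_1,\ldots,i_l\}$. Then set
\begin{equation*}
    u \;=\; \sum_{j=1}^{k} e_{\pi(j)\,j} \;\in\; \mathcal{A},
\end{equation*}
the associated permutation unitary. A direct matrix-unit computation (using $e_{\pi(j),j} e_{ii} = \delta_{ji}\, e_{\pi(i),i}$) gives $u e_{ii} u^* = e_{\pi(i)\pi(i)}$. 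Define $q := u^* q_1 u$, a projection in $\mathcal{M}$.

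Finally, to verify (i), I would compute
\begin{equation*}
    e_{ii}\, q \;=\; e_{ii}\, u^* q_1 u \;=\; u^*\bigl(u e_{ii} u^*\bigr) q_1 u \;=\; u^*\, e_{\pi(i)\pi(i)}\, q_1\, u,
\end{equation*}
so $e_{ii} q \neq 0$ iff $e_{\pi(i)\pi(i)} q_1 \neq 0$ iff $\pi(i) \in J$ iff $i \in \pi^{-1}(J) \subseteq \{i_1,\ldots,i_l\}$. Hence $\operatorname{supp}(q,\{e_{ii}\}_{i=1}^k) \le e_{i_1 i_1} + \cdots + e_{i_l i_l}$. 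No step looks like a genuine obstacle: the whole insight is to replace the vague notion of a ``moving'' unitary in $\mathcal{M}$ by a coordinate permutation living inside $\mathcal{A}$, which forces (ii) for free; the counting hypothesis $l \ge k \tau(\operatorname{supp}(q_1,\{e_{ii}\}))$ is then precisely what pigeonhole needs in order to place the permuted support inside $\{i_1,\ldots,i_l\}$.
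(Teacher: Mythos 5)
Your proof is correct and follows essentially the same route as the paper: both arguments conjugate $q_1$ by a permutation unitary drawn from $\mathcal A$, observe that this conjugation leaves $W^*(q_1,\mathcal A)$ unchanged, and use the counting hypothesis to move the support of $q_1$ into $\{i_1,\ldots,i_l\}$. The only (cosmetic) differences are that you make the permutation unitary fully explicit via $u=\sum_j e_{\pi(j)j}$ and use $q=u^*q_1 u$ where the paper writes $q=uq_1u^*$ with the matching direction of the permutation.
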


\begin{proof}
List $ \{e_{ii}   :   e_{ii}q_1\ne 0 \text { or } q_1e_{ii}\ne 0  \ \mbox{ for }\ 1\leq i\leq k\}$ as $\{e_{j_1j_1},\ldots, e_{j_mj_m}\} $, where $m= k\cdot\tau(\operatorname{supp}(q_1, \{e_{ii}\}_{i=1}^k)) $ is an integer by Definition \ref{def3}. As  $\tau(\operatorname{supp}(q_1, \{e_{ii}\}_{i=1}^k)) \le \ell/k$, we have $m\le \ell$. Thus, from the fact that $\mathcal A$ is a type ${\rm I}_k$ factor, we deduce that  there is a unitary element $u\in \mathcal A$ such that $ue_{j_nj_n}u^*=e_{i_ni_n}$ for $1\le n\le m.$ Now $q=uq_1u^*$ is a desired projection in $\mathcal M$.
\end{proof}

Recall that a von Neumann algebra is called {\em diffuse} if it contains no nonzero minimal projections.
The following lemma is prepared for an induction argument of Claim \ref{Gamma-2}.1. 

\begin{lemma} \label{lemma6}
    Let $k\in\mathbb N$. Suppose that $\mathcal A$ is a type ${\rm I}_k$ subfactor of $\mathcal M$  and $\{e_{ij}\}_{i,j=1}^k$ is a system of matrix units of $\mathcal A$. Assume that $x$ is an element in $\mathcal M$ such that
    $$\begin{aligned}
        W^*(x, \mathcal A)'\cap \mathcal M^\omega \quad \text{ is
        diffuse}.
    \end{aligned}$$
    Then, for any $\varepsilon>0$, there exist a positive integer $m$, a type ${\rm I}_m$ subfactor $\mathcal N $ of $\mathcal A'\cap \mathcal M$, a system of matrix units $\{f_{ij}\}_{i,j=1}^m$ of $\mathcal N $  and a projection $q$ in $\mathcal M$ such that
    \begin{enumerate}[label={\rm (\roman*)}]
      \item  \label{lemma3.4.i}   $q = q (\sum_{i=2}^m f_{ii} e_{11})= (\sum_{i=2}^m f_{ii} e_{11})q$;
      \item  \label{lemma3.4.ii}  $\operatorname{dist}_{\|\cdot\|_2} (x, W^*(q, \mathcal A,\mathcal N ))\le \varepsilon$.
    \end{enumerate}
\end{lemma}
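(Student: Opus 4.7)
The approach is to pass to the tensor decomposition $\M\cong \A\otimes \R$, where $\R=\A'\cap\M$ is a type ${\rm II}_1$ factor, and reduce the lemma to an approximation task carried out entirely inside $\R$. Writing $x=\sum_{a,b=1}^k e_{ab}\otimes x_{ab}$ with $x_{ab}\in\R$, Lemma \ref{lemma1}(ii) turns the hypothesis into the statement that $W^*(\{x_{ab}\})'\cap \R^\omega$ is diffuse. A projection $q\in \M$ obeying the support condition must take the form $q=e_{11}\otimes r$ for a projection $r\in \R$ with $r\le \sum_{i\ge 2}\hat f_{ii}$, where $\hat f_{ii}\in \R$ is the element corresponding to $f_{ii}\in \A'\cap\M$ under the canonical amplification. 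A direct computation then gives $W^*(q,\A,\N)=\A\otimes W^*(r,\N_\R)$, where $\N_\R\subset \R$ is the image of $\N$. Thus the lemma reduces to producing, inside $\R$, a type ${\rm I}_m$ subfactor $\N_\R$ with matrix units $\{\hat f_{ij}\}$ and a projection $r\le \sum_{i\ge 2}\hat f_{ii}$ such that every $x_{ab}$ is within $\epsilon/k$ of $W^*(r,\N_\R)$ in $\|\cdot\|_2$.

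To build $\N_\R$ I exploit the diffuseness hypothesis: any diffuse finite von Neumann algebra contains $m$ mutually orthogonal projections of trace $1/m$, so for every $m$ I can find such a family in $W^*(\{x_{ab}\})'\cap \R^\omega$ and lift it, with a standard perturbation, to pairwise orthogonal projections $\hat f_{11},\ldots,\hat f_{mm}\in \R$ of equal trace $1/m$ summing to $I$, each approximately commuting with every $x_{ab}$ in $\|\cdot\|_2$ within an error $\delta>0$ that I am free to prescribe. Any completion to matrix units $\{\hat f_{ij}\}$ defines $\N_\R$; the off-diagonal partial isometries need not approximately commute with $x$. A standard 2-norm estimate from the diagonal commutation then yields $\|x_{ab}-\sum_i \hat f_{ii}x_{ab}\hat f_{ii}\|_2=O(\sqrt{m}\,\delta)$, while $\|\hat f_{11}x_{ab}\hat f_{11}\|_2\le \|x_{ab}\|/\sqrt{m}$, so choosing $m$ large and $\delta$ small forces $\|x_{ab}-\sum_{i\ge 2}\hat f_{ii}x_{ab}\hat f_{ii}\|_2\le \epsilon/(2k)$ for every $a,b$.

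It remains to encode the tail $\sum_{i\ge 2}\hat f_{ii}x_{ab}\hat f_{ii}$ inside $W^*(r,\N_\R)$. Under the further decomposition $\R\cong \N_\R\otimes \Q$ with $\Q=\hat f_{11}\R\hat f_{11}$, this tail corresponds to $\sum_{i\ge 2}e_{ii}\otimes y_{ab}^{(i)}$ where $y_{ab}^{(i)}=\hat f_{1i}x_{ab}\hat f_{i1}\in \Q$, and a direct calculation gives $W^*(r,\N_\R)=\N_\R\otimes W^*(\{r_{ij}\}_{i,j\ge 2})$ with $r_{ij}=\hat f_{1i}r\hat f_{j1}\in \Q$. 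Therefore I need the finite family $\{y_{ab}^{(i)}\}$ to lie, up to $\epsilon/(2k)$ in $\|\cdot\|_2$, in the subalgebra $\T:=W^*(\{r_{ij}\}_{i,j\ge 2})\subset \Q$. By the L\"owenheim-Skolem principle for ${\rm II}_1$ factors, this finite family sits inside a separable subfactor $\Q_0\subset \Q$, which is finitely generated as a von Neumann algebra. Inserting these generators into a few disjoint off-diagonal $2\times 2$ blocks of an $(m-1)\times (m-1)$ matrix through the classical graph-projection construction, and enlarging $m$ to accommodate the needed blocks together with the earlier requirements, produces a projection $r\in \R$ with $\T\supseteq \Q_0$. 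Pulling back, $q=e_{11}\otimes r$ satisfies the support condition and $\operatorname{dist}_{\|\cdot\|_2}(x,W^*(q,\A,\N))\le \epsilon$. The main obstacle is this final encoding step, where $m$ must be coordinated with the number of generators of $\Q_0$ and the 2-norm budget carefully apportioned among the diagonal approximation, the truncated $(1,1)$-block, and the encoding.
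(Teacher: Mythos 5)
Your first two paragraphs track the paper's proof closely: the identification $\M\cong\A\otimes\R$ with $\R=\A'\cap\M$, the use of Lemma~\ref{lemma1} to move the diffuseness hypothesis into $\R^\omega$, the lift of an orthogonal family of equal-trace projections from $W^*(\{x_{ab}\})'\cap\R^\omega$ to a matrix-unit diagonal $\{\hat f_{ii}\}$ in $\R$, and the block-diagonal approximation of each $x_{ab}$ are all steps the paper takes. You also correctly observe that condition (i) forces $q=e_{11}\otimes r$ with $r\le\sum_{i\ge2}\hat f_{ii}$ and that $W^*(q,\A,\N)=\A\otimes W^*(r,\N_\R)$.

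The third paragraph, the encoding step, is where the argument breaks. You need the family $\{y_{ab}^{(i)}=\hat f_{1i}x_{ab}\hat f_{i1}:1\le a,b\le k,\ 2\le i\le m\}$ to sit (approximately) inside $\T=W^*(\{r_{ij}\}_{i,j\ge2})$. This family has roughly $k^2(m-1)$ members; the matrix $r$ lives in $M_{m-1}(\Q)$ and, by the graph-projection device, each encoded element consumes a block of rows/columns, so the budget of $m-1$ positions is insufficient as soon as $m$ is nontrivially large. Your attempted repair, ``enlarging $m$,'' is circular: changing $m$ changes the partition $\{\hat f_{ii}\}$ and hence the very elements $y_{ab}^{(i)}$ you are trying to encode. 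The fallback --- ``this finite family sits inside a separable subfactor $\Q_0\subset\Q$, which is finitely generated'' --- does not help: L\"owenheim--Skolem yields a separable subfactor, but ``separable $\Rightarrow$ finitely generated'' is exactly the open generator problem that the paper itself flags; and the genuinely finitely generated object $W^*(\{y_{ab}^{(i)}\})$ has an $m$-dependent number of generators, putting you back in the circle. So the ``few disjoint $2\times2$ blocks'' never becomes ``few.''

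The paper avoids this by a different mechanism: it fixes $m=64k^2$ once and for all (depending only on $k$, not on $\epsilon$ or $x$), keeps the entire block diagonal $y=\sum_i p_ixp_i$ (no need to discard the $(1,1)$ block), and invokes the Cut-and-Paste Theorem (Theorem 4.1 of \cite{Shen}) to produce a single projection $q$ with the exact equality $W^*(y,\A,\N)=W^*(q,\A,\N)$ whose support has trace $\le 1/(2k)$; Lemma~\ref{lemma3.4} then relocates that support into $\sum_{i\ge2}f_{ii}e_{11}$. The only error is the one incurred in replacing $x$ by $y$, controlled by the ultrapower lift, so there is no competition between the size of $m$ and the amount of data to encode. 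If you want to complete your approach without citing the Cut-and-Paste Theorem, you would essentially have to reprove it; as written, the counting does not close.
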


\begin{proof}
    Fix $\varepsilon>0$. Let $m=64k^2$ and $\mathcal P=\mathcal A'\cap \mathcal M$. As $\mathcal A$ is a type ${\rm I}_k$ subfactor of $\mathcal M$, we identify $\mathcal M$ with $\mathcal P\otimes \mathcal A$. 
    There exists a family $\{(p^{(i)}_n)_\omega : 1\le i\le m\}$ of mutually orthogonal projections in $W^*(x, \mathcal A)'\cap \mathcal M^\omega$ with the same trace such that $\sum_{1\le i\le m} (p^{(i)}_n)_\omega = I$.

    By Lemma \ref{lemma1}, $W^*(x, \mathcal A)'\cap \mathcal M^\omega \subseteq \mathcal A'\cap \mathcal M^\omega =\mathcal P^\omega, $ whence $\{(p^{(i)}_n)_\omega : 1\le i\le m\} \subseteq \mathcal P^\omega$.  By Lemma A.5.3 in \cite{SS}, we can further assume that  $\{p_n^{(i)}\}_{1\le i\le m}$ is a family of mutually  orthogonal equivalent projections in  $\mathcal P$ such that $\sum_{1\le i\le m}  p^{(i)}_n  =I$. Thus $\{(p^{(i)}_n)_\omega : 1\le i\le m\}\subseteq  W^*(x, \mathcal A)'\cap \mathcal M^\omega$ implies that there exists a family $\{p_{i } \}_{ 1\le i\le m } $  of mutually orthogonal equivalent  projections in $\mathcal P$ such that
    \begin{enumerate}[label=(\alph*)]
        \item  \label{lemma3.4.a} $p_1+p_2+\cdots + p_m = I$;
        \item  \label{lemma3.4.b} $\tau(p_i)=1/m $ for each $1\le i\le m$;
        \item \label{lemma3.4.c}  $\|x- \sum_{1\le i\le m} p_i x p_i \|_2 \le \varepsilon$.
    \end{enumerate}
    Since $\mathcal P$ is a subfactor, there exist a type ${\rm I}_m$ subfactor $\mathcal N $ of $\mathcal P$  and a system of matrix units $\{f_{ij}\}_{i,j=1}^m$ of $\mathcal N $ such that $f_{ii}=p_i$ for each $1\le i\le m$.

    Define $y= \sum_{i=1}^m p_i x p_i= \sum_{i=1}^m f_{ii} x f_{ii} $. Recall that $\{e_{st}\}^{k}_{s,t=1}$ is a system of matrix units of $\mathcal A$. As $\mathcal A$ commutes with $\mathcal N $, it follows that $W^*(\mathcal A,\mathcal N )$ is a subfactor of type ${\rm I}_{km}$. Moreover, we have that $\{e_{st}f_{ij}\}_{1\le s,t\le k; 1\le i,j\le m}$ is a system of matrix units of $W^*(\mathcal A,\mathcal N )$ and $\{e_{ss}f_{ii}\}_{1\le s\le k; 1\le i\le m}$ is a family of mutually orthogonal projections in $\mathcal M$ such that $\tau(e_{ss}f_{ii})= \frac 1{km}$ for each $1\le s\le k$ and $1\le i\le m$.  It follows that
    \begin{equation*}
        \begin{aligned}
            \sum_{i=1}^m f_{ii} x f_{ii}=y   = \sum_{j=1}^m f_{jj} y f_{jj} 
             = \sum_{s,t=1}^k e_{ss} ( \sum_{i=1}^m  f_{ii} y f_{ii} )e_{tt}  =\sum_{s,t=1}^k\sum_{i=1}^m  e_{ss}   f_{ii} y   e_{tt}f_{ii}.
        \end{aligned}
    \end{equation*}
    When no confusion can arise, we write $|\SS |$ for the cardinality of a set $\SS$. Thus, we obtain the following inequality:
    \begin{equation*}
         \frac {  |\{ (e_{ss}f_{ii}, e_{tt}f_{jj} )  :   e_{ss}   f_{ii} y  e_{tt}   f_{jj}   \ne 0 \ \mbox{ for }   1\leq s,t \leq k  \mbox{ and } 1\leq i, j \leq m \}| }{(km)^2} \leq  \frac {k^2 m}{k^2m^2}=\frac 1
         m.
    \end{equation*}
    By the cut-and-paste theorem (Theorem 4.1 of \cite{Shen}), there exists a projection $q$ in $\mathcal M$ such that
    \begin{enumerate}[resume, label=(\alph*)]
       \item \label{lemma3.4.d} $W^*(y, \mathcal A, \mathcal N ) = W^*(q, \mathcal A, \mathcal N )$;
       \item \label{lemma3.4.e}  $\displaystyle \tau \Big(\operatorname{supp} (q, \{e_{ss}f_{ii}\}_{1\le s\le k; 1\le i\le m}) \Big)\le
       2\left (\frac {1}{ m }\right )^{1/2}+\frac 2 {km} =   2 \left (\frac 1  {64k^2} \right )^{1/2}
        + \frac 2 { 64k^3}\le  \frac 1 {2k}$.
    \end{enumerate}
    Note that, from \ref{lemma3.4.e}, we obtain that
    \begin{equation*}
        ( km) \cdot \tau \Big(\operatorname{supp} (q, \{e_{ss}f_{ii}\}_{1\le s\le k; 1\le i\le m}) \Big)
        \le  (km) \cdot \frac 1 {2k} = \frac m 2
        \le  \ | \{ f_{ii}e_{11} : 2\le i\le m\}|.
    \end{equation*}
    By Lemma \ref{lemma3.4}, we can further assume that $ \operatorname{supp} (q, \{e_{ss}f_{ii}\}_{1\le s\le k; 1\le i\le m}) \le \sum_{i=2}^m f_{ii} e_{11}$, which implies \ref{lemma3.4.i}
    \begin{equation*}
        q = q (\sum_{i=2}^m f_{ii} e_{11})= (\sum_{i=2}^m f_{ii}
        e_{11})q.
    \end{equation*}
    Now \ref{lemma3.4.ii} follows from \ref{lemma3.4.c}, the choice of $y$, and \ref{lemma3.4.d}.
\end{proof}

An easy exercise of spectral theory is needed.

\begin{lemma}\label{lemma7}
    Let $\mathcal N_0\subseteq \mathcal M$ be a subfactor of type ${\rm I}_3$ and $\{e_{ij}\}_{1\le i,j\le 3}$  a system of matrix units of $\mathcal N_0$. Let $  z_1$ and $z_2$ be self-adjoint elements in $\mathcal N_0^{\prime}\cap \mathcal M$ and $y$   an element in $\mathcal N_0^{\prime}\cap \mathcal M$. Assume that
     \begin{equation*}
        a= e_{11}+2e_{22}+3e_{33}
     \end{equation*} and
     \begin{equation*}
     \begin{aligned}
     b=  & \ z_1e_{11} & + & \ ye_{12}   & + &  \ e_{13} &+\\
         & \ y^*e_{21}& + & \ z_2e_{22}  & + & \ e_{23} &+\\
         & \  \ \ \ e_{31}   & + & \ \ \   e_{32}    & + & \ e_{33} .
     \end{aligned}
     \end{equation*}
     Then
     \begin{equation*}
        W^*(y, z_1,z_2 , \mathcal N_0 ) \subseteq W^*(a+ib).
     \end{equation*}
\end{lemma}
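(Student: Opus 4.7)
The plan is to set $c := a+ib$ and show directly that $W^*(c)$ contains the matrix units $\{e_{ij}\}_{i,j=1}^{3}$ together with $z_1, z_2, y$, one batch at a time, by combining spectral calculus on $a$ with compressions of $b$ by the matrix units we have already recovered. The first step is to verify that $b$ is self-adjoint: using $z_1 = z_1^*$, $z_2 = z_2^*$, and the fact that $y, z_1, z_2$ all commute with $\mathcal{N}_0$, one checks $(y e_{12})^* = e_{21} y^* = y^* e_{21}$, and analogous identities show that every term in the given expression for $b$ is paired with its adjoint. Since $a$ is manifestly self-adjoint, both $a = (c+c^*)/2$ and $b = (c-c^*)/(2i)$ lie in $W^*(c)$.

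Next, since $a = e_{11} + 2 e_{22} + 3 e_{33}$ has the three simple eigenvalues $1, 2, 3$ with spectral projections $e_{11}, e_{22}, e_{33}$ respectively, the continuous functional calculus gives $e_{11}, e_{22}, e_{33} \in W^*(a) \subseteq W^*(c)$. With these diagonal projections in hand I would compress $b$: a direct calculation shows $e_{11} b e_{33} = e_{13}$, $e_{22} b e_{33} = e_{23}$, $e_{33} b e_{11} = e_{31}$, and $e_{33} b e_{22} = e_{32}$ (each such compression kills every term of $b$ except a single matrix unit whose coefficient is $1$). The remaining off-diagonal matrix units are then recovered as products $e_{12} = e_{13} e_{32}$ and $e_{21} = e_{23} e_{31}$, so $\mathcal{N}_0 \subseteq W^*(c)$.

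Finally, with all matrix units of $\mathcal{N}_0$ at our disposal, the three compressions $e_{11} b e_{11} = z_1 e_{11}$, $e_{22} b e_{22} = z_2 e_{22}$, and $e_{11} b e_{22} = y e_{12}$ lie in $W^*(c)$. Because $z_1, z_2, y$ commute with $\mathcal{N}_0$, these corner-pieces transport across the matrix units: for each $i$ one has $z_1 e_{ii} = e_{i1}(z_1 e_{11}) e_{1i}$, whence $z_1 = \sum_i z_1 e_{ii} \in W^*(c)$, and likewise $z_2 \in W^*(c)$. For $y$, first $(y e_{12}) e_{21} = y e_{11}$, and then the same conjugation trick yields $y = \sum_i e_{i1}(y e_{11}) e_{1i}$. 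Thus $W^*(y, z_1, z_2, \mathcal{N}_0) \subseteq W^*(c)$. I do not foresee a real obstacle: the proof is essentially matrix-unit bookkeeping, with the one substantive ingredient being the spectral calculus on $a$ that produces the diagonal projections needed to launch the compression arguments.
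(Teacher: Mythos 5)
Your proposal is correct and takes essentially the same route as the paper's proof: recover the diagonal projections $e_{11},e_{22},e_{33}$ from the spectral decomposition of $a$, then use compressions of $b$ of the form $e_{ii}be_{33}$ and $e_{33}be_{ii}$ to obtain the remaining matrix units, and finally read off $z_1,z_2,y$ from $e_{i1}be_{1i}$, $e_{i2}be_{2i}$, and $e_{i1}be_{2i}$ respectively. The only minor presentational caveat is the phrase ``three simple eigenvalues,'' which is slightly imprecise in a ${\rm II}_1$ factor (the spectral projections are not rank one), but the intended conclusion -- that $e_{11},e_{22},e_{33}$ lie in $W^*(a)$ by functional calculus -- is exactly right and is what the paper uses.
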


\begin{proof}
Apparently, $e_{11}, e_{22}, e_{33}$ are in $W^*(a)$. Observe that
$e_{ii}be_{33}=e_{i3}$ and $e_{33}be_{ii}=e_{3i}$ for $i=1,2,3$. We
have $e_{i3}e_{3j}=e_{ij}$ is in $ W^*(a+ib)$ for $1\le i,j\le 3$.
Thus $\mathcal N_0\subseteq  W^*(a+ib)$. From the fact that
$e_{i1}be_{1i}= z_1e_{ii}$ for $i=1,2,3$, it follows that $z_1=
z_1e_{11}+ z_1e_{22}+ z_1e_{33}$ is in $W^*(a+ib)$. Similarly, it
can be shown that $z_2, y\in W^*(a+ib)$. Therefore,
\begin{equation*}
        W^*(y, z_1,z_2 , \mathcal N_0) \subseteq W^*(a+ib).
     \end{equation*}
     This ends the proof.
\end{proof}

The next lemma, used repeatedly in the proof of Proposition \ref{prop3.9-gamma}, is probably well-known. However, we are unable to find a reference.  For the sake of completeness and to make the paper self-contained, we provide an elementary proof.

\begin{lemma}\label{lemma3.2}
Suppose that $\mathcal M_1$ is a finite von Neumann algebra and $\mathcal M_2\subseteq  \mathcal M_1$ is a von Neumann subalgebra containing the identity  $I$ of $\mathcal M_1$. If $\mathcal M_2$ is diffuse, then $\mathcal M_1$ is diffuse.
\end{lemma}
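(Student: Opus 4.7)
My plan is to argue by contrapositive: suppose $\mathcal{M}_1$ has a nonzero minimal projection $p$, and produce a nonzero minimal projection in $\mathcal{M}_2$, contradicting the diffuseness hypothesis. The bridge between the two algebras will be built from the central support $z = z(p) \in Z(\mathcal{M}_1)$.

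First I identify a matrix-algebra direct summand of $\mathcal{M}_1$ that contains $p$. Since $\mathcal{M}_1$ is finite and $p$ is minimal, $p\mathcal{M}_1 p$ is a finite von Neumann algebra whose only projections are $0$ and $p$, so $p\mathcal{M}_1 p = \mathbb{C} p$. For any central projection $z_0 \in Z(\mathcal{M}_1)$ with $0 < z_0 \le z$, the projection $z_0 p \le p$ must be either $0$ or $p$ by minimality; a short case analysis using the definition of central support then forces $z_0 = 0$ or $z_0 = z$. Hence $z$ is a minimal central projection of $\mathcal{M}_1$, so $z\mathcal{M}_1$ is a finite factor, and since it contains the minimal projection $p$ it is $*$-isomorphic to $M_n(\mathbb{C})$ for some $n \in \mathbb{N}$.

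Next I transfer this structure to $\mathcal{M}_2$ via the compression $\phi : \mathcal{M}_2 \to z\mathcal{M}_1 \cong M_n(\mathbb{C})$ defined by $\phi(x) = zx$. Centrality of $z$ in $\mathcal{M}_1$ makes $\phi$ a unital normal $*$-homomorphism. Its kernel is a weak-$*$ closed two-sided ideal of $\mathcal{M}_2$, hence has the form $z'\mathcal{M}_2$ for some central projection $z' \in Z(\mathcal{M}_2)$. Because $\phi(I) = z \neq 0$, we have $z' \neq I$, and $\phi$ restricts to a unital injective $*$-homomorphism of $(I - z')\mathcal{M}_2$ into $M_n(\mathbb{C})$. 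Therefore $(I - z')\mathcal{M}_2$ is a nonzero finite-dimensional $C^*$-algebra and admits a nonzero minimal projection $q$.

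Finally, $q$ is also minimal in $\mathcal{M}_2$: since $z'$ is central in $\mathcal{M}_2$ and $qz' = 0$, for every $x \in \mathcal{M}_2$ we have $qxq = q(I - z')\, x\, (I - z')q \in q\bigl((I - z')\mathcal{M}_2\bigr)q = \mathbb{C} q$. Thus $q\mathcal{M}_2 q = \mathbb{C} q$, contradicting the diffuseness of $\mathcal{M}_2$. The only conceptually nontrivial point is the first step — recognizing that a minimal projection in a finite von Neumann algebra yields a type $\mathrm{I}_n$ direct summand through its central support; once this is in hand, the compression $\phi$ together with the standard correspondence between weak-$*$ closed two-sided ideals and central projections makes the remaining argument essentially routine bookkeeping.
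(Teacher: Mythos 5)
Your proof is correct. It shares with the paper's the key initial reduction: take the central support $z = C_p$ of a minimal projection $p\in\mathcal M_1$, observe that $z$ is a minimal central projection, and conclude that $z\mathcal M_1$ is a finite factor containing a minimal projection, hence a type ${\rm I}_n$ summand. Where the two proofs diverge is in the endgame. The paper forms the projection $q=\bigvee\{e\in\mathcal P(\mathcal M_2): eC_p=0\}$ and argues by a pigeonhole count: if $\mathcal M_2$ were diffuse, one could split $I-q$ into $n+1$ mutually orthogonal nonzero projections of $\mathcal M_2$, each of whose compressions by $C_p$ remains nonzero, producing $n+1$ orthogonal nonzero projections inside the type ${\rm I}_n$ factor $C_p\mathcal M_1$ — a contradiction. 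You instead exploit the normal $*$-homomorphism $\phi: x\mapsto zx$, identify $\ker\phi=z'\mathcal M_2$ for a central $z'\in Z(\mathcal M_2)$ (in fact $z'$ coincides with the paper's $q$, since $q$ is easily seen to be central and to satisfy $qC_p=0$), deduce that $(I-z')\mathcal M_2$ embeds unitally into $M_n(\mathbb C)$ and is therefore finite-dimensional, and finally promote a minimal projection of $(I-z')\mathcal M_2$ to a minimal projection of $\mathcal M_2$. Your route is somewhat more structural — it leans on the correspondence between weak-$*$ closed ideals and central projections and produces an explicit minimal projection in $\mathcal M_2$ — whereas the paper's avoids that machinery at the cost of a slightly more ad hoc orthogonal-projection count. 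Both are fully rigorous, and the substantive idea (passing to the type ${\rm I}_n$ central summand determined by $C_p$) is the same.
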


\begin{proof}
    Assume that $\mathcal M_1$ is not diffuse. Then there exists a nonzero minimal projection $p$ in $\mathcal M_1$. Let $C_p$ be the central support of $p$ in $\mathcal M_1$. By Proposition 6.4.3 of \cite{Kadison2}, $\mathcal M_1 C_p$ is a factor. As $\mathcal M_1$ is finite, $\mathcal M_1 C_p$ is a factor of type ${\rm I}_k$ for some positive integer $k$.

    Define
    $$
        q = \bigvee\{e: e \text{ is a projection in } \mathcal M_2 \text{ such that } eC_p =0 \}.
        $$
    Then $I-q$ is a nonzero projection in $\mathcal M_2$ satisfying, for every nonzero subprojection $f$ of $I-q$ in $\mathcal M_2$, $fC_p \ne 0$. If $\mathcal M_2$ contains no minimal projections, then there exists a family of mutually orthogonal nonzero projections $f_1, \ldots, f_{k+1}$ in $\mathcal M_2$ such that $I-q=f_1+\cdots+ f_{k+1}$. The choice of $q$  ensures that $f_1C_p, \ldots,  f_{k+1}C_p$ is a family of mutually orthogonal nonzero projections in $\mathcal M_1 C_p$, which contradicts the fact that $\mathcal M_1 C_p$ is a factor of type ${\rm I}_k$. This completes the proof.
\end{proof}

Central sequence algebras of type ${\rm II}_1$ factors, developed by McDuff in \cite{McDuff}, play an essential role in the paper. Inspired by a method similar to Theorem 5 of \cite{McDuff} and the subsequent comment, we prove the following result, which is a slight modification of Lemma 3.5 of \cite{FGL} (or Theorem A.6.5 of \cite{SS}) by removing the condition that $\mathcal M$ is separable. Recall that a finite von Neumann algebra with a faithful, normal, tracial state is separable if it has a separable predual, which is equivalent to it being countably generated (see \cite {Dix2} Exercise I.7.3 b and c).

\begin{lemma}\label{may20Lemma2.5}
    Let $\mathcal M$ be a type ${\rm II}_1$ factor with  trace $\tau$. Suppose that $\mathcal Q$ is a separable irreducible ${\rm II}_1$ subfactor of $\mathcal M$. If $\mathcal Q'\cap \mathcal M^\omega \ne \mathbb CI$, then  $\mathcal Q'\cap \mathcal M^\omega  $ is diffuse.
\end{lemma}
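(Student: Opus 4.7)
The plan is to reduce the statement to the separable case, which is exactly Lemma 3.5 of \cite{FGL} (equivalently Theorem A.6.5 of \cite{SS}), by localizing the projection at issue to a separable subfactor of $\mathcal M$. The tool that powers this reduction in the nonseparable setting is Proposition 7.1 of \cite{CPSS}, already used in the proof of Lemma \ref{lemma2.2}, which permits the absorption of any countable subset of $\mathcal M$ into a subfactor with separable predual.

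To prove $\mathcal Q' \cap \mathcal M^\omega$ is diffuse, I would show that every nonzero projection $P$ in this algebra has a proper nonzero subprojection still lying in it. Fix such a $P$. By Theorem A.5.3 of \cite{SS}, represent $P = (p_n)_\omega$ with each $p_n$ a projection in $\mathcal M$ of trace $\tau_\omega(P)$. Invoke Proposition 7.1 of \cite{CPSS} to produce a subfactor $\mathcal M_0 \subseteq \mathcal M$ with separable predual containing $\mathcal Q \cup \{p_n : n \in \mathbb N\}$ (this is where the separability of $\mathcal Q$ is crucial: only countably many generators need to be absorbed). The inclusion $\mathcal Q \subseteq \mathcal M_0$ is automatically irreducible, since $\mathcal Q' \cap \mathcal M_0 \subseteq \mathcal Q' \cap \mathcal M = \mathbb C I$. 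Because $(p_n)$ is a sequence in $\mathcal M_0$, the projection $P$ lies in $\mathcal M_0^\omega \subseteq \mathcal M^\omega$, and asymptotic commutation with $\mathcal Q$ is preserved, so $P \in \mathcal Q' \cap \mathcal M_0^\omega$.

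Now apply the separable version of the lemma to the inclusion $\mathcal Q \subseteq \mathcal M_0$: both algebras have separable predual, $\mathcal Q$ is an irreducible subfactor of $\mathcal M_0$, and $\mathcal Q' \cap \mathcal M_0^\omega$ is nontrivial because it contains $P$. Hence $\mathcal Q' \cap \mathcal M_0^\omega$ is diffuse, so $P$ admits a proper nonzero subprojection there, which automatically lies in the larger algebra $\mathcal Q' \cap \mathcal M^\omega$. Since $P$ was an arbitrary nonzero projection of $\mathcal Q' \cap \mathcal M^\omega$, the algebra contains no minimal projection and is diffuse. I expect the only real obstacle is organizational: verifying that a single separable subfactor $\mathcal M_0$ can be chosen to absorb both $\mathcal Q$ and a representing sequence of the chosen $P$, and that irreducibility transfers along containment. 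Both are immediate from the cited results.
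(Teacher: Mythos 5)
Your proposal is correct and takes a genuinely different route from the paper. You reduce to the separable case by an absorption (downward L\"owenheim--Skolem) step: given a nontrivial projection $P=(p_n)_\omega$ in $\mathcal Q'\cap\mathcal M^\omega$, place $\mathcal Q$ together with the representing sequence $\{p_n\}_n$ inside a separable II$_1$ subfactor $\mathcal M_0\subseteq\mathcal M$, note that $\mathcal Q\subseteq\mathcal M_0$ remains irreducible and $P$ is a nontrivial projection in $\mathcal Q'\cap\mathcal M_0^\omega$, and invoke the separable result (Lemma 3.5 of \cite{FGL} or Theorem A.6.5 of \cite{SS}) as a black box to obtain a proper nonzero subprojection of $P$, which then sits inside $\mathcal Q'\cap\mathcal M^\omega$. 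The paper instead argues directly in the nonseparable setting: assuming a minimal projection $(q_n)_\omega$ exists, it passes to its central carrier $z$, so that $z(\mathcal Q'\cap\mathcal M^\omega)$ is a type I$_k$ factor, proves $\mathcal Q'\cap\mathcal M^\omega$ is $\|\cdot\|_{2,\tau_\omega}$-nonseparable via irreducibility $\mathcal Q'\cap\mathcal M=\mathbb C I$ and the conditional expectation $E_{\mathcal M}^{\mathcal M^\omega}$, then forces $z=I$ from the trace estimate $kr^2\ge r$, making the whole relative commutant finite-dimensional --- contradicting nonseparability. Your approach is more modular and avoids redoing the separable argument in a nonseparable ambient factor; the cost is that you must justify the absorption. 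Two small points to tighten in a final write-up: Proposition 7.1 of \cite{CPSS}, as used elsewhere in the paper, is a statement about Property-$\Gamma$ factors producing separable $\Gamma$ subfactors, whereas here $\mathcal M$ carries no $\Gamma$ hypothesis --- you only need the bare downward L\"owenheim--Skolem fact that a countable subset of a type II$_1$ factor is contained in a separable II$_1$ subfactor, which is standard but should be cited or sketched on its own; and the case $P=I$ deserves a one-line separate treatment, since containing $I$ alone does not witness $\mathcal Q'\cap\mathcal M_0^\omega\ne\mathbb C I$, though a proper nonzero subprojection of $I$ is supplied directly by the hypothesis $\mathcal Q'\cap\mathcal M^\omega\ne\mathbb C I$.
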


\begin{proof}
    Assume that  $\mathcal Q'\cap \mathcal M^\omega \ne \mathbb CI$ and $\mathcal Q'\cap \mathcal M^\omega  $ is not diffuse. Note that $\mathcal M^\omega$ is a type ${\rm II}_1$ factor with a natural trace $\tau_\omega$ and $\mathcal M\subseteq \mathcal M^\omega$. Let $E_{\mathcal M}^{\mathcal M^\omega}: \mathcal M^\omega\rightarrow \mathcal M$ be the trace-preserving conditional expectation from $\mathcal M^\omega$ onto $\mathcal M$. Suppose that $\{a_m\}_{m=1}^\infty$ is a countable family of self-adjoint generators of $\mathcal Q$.

    Let $(q_n)_\omega\ne 0, I$ be a minimal projection in  $\mathcal Q'\cap \mathcal M^\omega  $   and $z=(Y_{n})_\omega$ the central support of $(q_n)_\omega$   in  $\mathcal Q'\cap \mathcal M^\omega  $. Then $ z(\mathcal Q'\cap \mathcal M^\omega)$ is a factor of type ${\rm I}_k$ for some positive integer $k$ by Proposition 6.4.3 of \cite{Kadison2}. Assume that $\tau_\omega((q_n)_\omega)=r$ with $0<r<1$, then $\tau_\omega(z) = kr\le 1$. We can further assume that $q_n$ and $Y_n$ are  projections in $\mathcal M$ with $\tau(q_n)=r$ and $\tau(Y_n)=kr$ for each $n\ge 1$ by Theorem A.5.3 in \cite {SS}.

    We claim that  $\mathcal Q'\cap \mathcal M^\omega  $  is a nonseparable subspace of $ \mathcal M^\omega  $ with respect to $\|\cdot\|_{2,\tau_\omega}$,  the trace norm of $\mathcal M^\omega$.  Assume, to the contrary, that $\mathcal Q'\cap \mathcal M^\omega  $ is separable with respect to $\|\cdot\|_{2,\tau_\omega}$ and assume that  $\{(y_n^{(m)})_\omega\}_{m=1}^\infty$ is a dense subset of $\mathcal Q'\cap \mathcal M^\omega  $.   As $\mathcal Q'\cap\mathcal M=\mathbb CI$, $(q_n)_\omega\notin \mathcal M$, whence $\delta=\|(q_n)_\omega- E_{\mathcal M}^{\mathcal M^\omega}((q_n)_\omega)\|_{2, \tau_\omega} >0$. It follows that
    \begin{equation*}
        \delta =\|(q_n)_\omega- E_{\mathcal M}^{\mathcal M^\omega}((q_n)_\omega)\|_{2, \tau_\omega}\le \|(q_n)_\omega -y\|_{2,\tau_\omega} =\lim_{n\rightarrow \omega} \|q_n-y\|_2, \qquad  \forall \ y\in \mathcal
        M.
    \end{equation*}
    Combining it with the fact that $(q_n)_\omega$ is in $ \mathcal Q'\cap \mathcal M^\omega  $, for each $n\ge 1$  we let $k_n\in\mathbb N$  be  such that
    \begin{equation*}
        \|q_{k_n}- y_n^{(m)} \|_2 \ge \delta /2 \quad \mbox{ and } \quad \|[q_{k_n}, a_m]\|_2 \le 1/n, \qquad \forall \ 1\le m\le
        n.
    \end{equation*}
    Therefore, $(q_{k_n})_\omega \in \mathcal Q'\cap \mathcal M^\omega  $ and $\|(q_{k_n})_\omega- (y_n^{(m)})_\omega\|_{2,\tau_\omega}\ge \delta/2>0$ for $m\ge 1$. This contradicts with the assumption that  $\{(y_n^{(m)})_\omega\}_{m=1}^\infty$ is   dense in $\mathcal Q'\cap \mathcal M^\omega $. Hence  $\mathcal Q'\cap \mathcal M^\omega  $  is  nonseparable.

    Observe that $E_{\mathcal M}^{\mathcal M^\omega}(\mathcal Q'\cap \mathcal M^\omega)\subseteq \mathcal Q'\cap \M =\mathbb C I$. Thus $E_{\mathcal M}^{\mathcal M^\omega}((q_n)_\omega) =\tau_\omega((q_n)_\omega)=r,$ which implies that
    \begin{equation*}
        \begin{aligned}
        \lim_{n\rightarrow \omega}\tau(q_ny) &=\tau_\omega((q_n)_\omega y)= \tau_\omega(E_{\mathcal M}^{\mathcal M^\omega}((q_n)_\omega y))=
        \tau_\omega(E_{\mathcal M}^{\mathcal M^\omega}((q_n)_\omega) y)\\&=\tau_\omega(\tau_\omega((q_n)_\omega) y) = \tau_\omega((q_n)_\omega)\tau(y)=r \ \tau(y), \qquad \text{ for all $y \in \mathcal
        M$.}
    \end{aligned}
    \end{equation*}


    Combining it with that $(q_n)_\omega$ is in $ \mathcal Q'\cap \mathcal M^\omega  $,  for each $n\ge 1$  we let $j_n\in\mathbb N$  be  such that
    \begin{equation*}
        |\tau(q_{j_n} Y_n)-r\cdot kr |\le 2^{-n} \quad \mbox{ and } \quad \max_{1\le m \le n}\|[q_{j_n}, a_m]\|_2 \le 2^{-n}.
    \end{equation*}
    Hence, $( q_{j_n} Y_n)_\omega= ( q_{j_n} )_\omega z$ is a projection in $ z(\mathcal Q'\cap \mathcal M^\omega)$ with trace $kr^2$. On the other hand,  $ z(\mathcal Q'\cap \mathcal M^\omega)$ is   a type I$_k$ factor with a minimal projection $(q_n)_\omega $ of trace $r$. Therefore, $kr^2\ge r$, whence $kr=1$ and $z=I$. This means that $\mathcal Q'\cap \mathcal M^\omega  $ is a type I$_k$ factor, which contradicts the fact that  $\mathcal Q'\cap \mathcal M^\omega  $  is nonseparable. This ends the proof of the lemma.
\end{proof}

The following \Cref{Gamma-2} aims to simplify the proof of \Cref{prop3.9-gamma}. Additionally, the proof of this lemma draws inspiration from that of Proposition 5.10 in \cite{Shen}. The matrix tricks used to obtain the ``almost single generator'' here are also related to the proof of \cite{GeShen}.

\begin{lemma} \label{Gamma-2}
    Let $\mathcal M$ be a type ${\rm II}_1$ factor with trace $\tau$. Suppose that $W^*(x)'\cap \mathcal M^\omega $ is  diffuse for every $x$ in $\mathcal{M}$.

    Then for any finitely many elements $x_1,\ldots, x_n$ in $\mathcal{M}$ and $\varepsilon>0$, there exists an element $z$ in $\mathcal{M}$ such that $\operatorname{dist}_{\Vert\cdot\Vert_2}(x_j,W^*(z)) < \varepsilon$ for every $j=1,\ldots,n$.
\end{lemma}

\begin{proof}
    Let $\mathcal N_0$ be a type ${\rm I}_3$ subfactor of $\mathcal M$ and $\mathcal P =\mathcal N_0 '\cap \mathcal M$ so that $\mathcal M \cong\mathcal P \otimes \mathcal N_0$. 
    Assume that $\{e_{ij}\}_{i,j=1}^3$ is a system of matrix units of $\mathcal N_0$. Then there is a family of elements $\{y_{ij}^{(r)}\}_{1\le i,j\le 3; 1\le r\le n}$ in $\mathcal P $ such that
    \begin{equation} \label{eq-xr-yrij}
        x_r= \sum_{1\le i,j\le 3} y_{ij}^{(r)}e_{ij}, \qquad \forall \ 1\le r\le n.
    \end{equation}
    List elements in $\{ y_{ij}^{(r)} \}_{1\le i,j\le 3; 1\le r\le n} $ as $ \{y_r\}_{1\le r\le 9n}$. To proceed, we prove a claim as follows.
    
    \vspace{0.1in}

    \textbf{Claim \ref{Gamma-2}.1.} \emph{There exist a family of projections $\{q_r\}_{r=1}^{9n}$ in $\mathcal M$ and a family of commuting subfactors $\mathcal N_r$ of type ${\rm I}_{m_r}$ for $r=1, \dots, 9n$ satisfying
    \begin{equation*}
        \mathcal N_r\subseteq (\mathcal N_0\cup \mathcal N_1\cup \cdots \cup \mathcal N_{r-1})'\cap \mathcal M,
    \end{equation*}
    where each $\mathcal N_r$ is equipped with a system of matrix units $\{\{f_{ij}^{(r)}\}_{i,j=1}^{m_r}$, 
    such that, for each $r$,
    \begin{enumerate} [label= {\rm(\arabic*)} ]
        \item  \label{claim3.8.1} $q_r = q_r \left (\sum_{i=2}^{m_r} f_{i,i}^{(r)}\right )f_{1,1}^{(r-1)}\cdots f_{1,1}^{(1)}e_{11} = \left(\sum_{i=2}^{m_r} f_{i,i}^{(r)}\right )f_{1,1}^{(r-1)}\cdots f_{1,1}^{(1)}e_{11} q_r$, and
        \item  \label{claim3.8.2}  $\operatorname{dist}_{\|\cdot\|_2} (y_r, W^*(q_r, \mathcal N_0,\mathcal N_1,\ldots, \mathcal N_r)) < \varepsilon/9$.
    \end{enumerate}}
    
    \vspace{0.1in}

    By induction, we prove Claim \ref{Gamma-2}.1 in two steps.       

    \noindent\textbf{Step One.} Assume that the claim is true for $r=1$. Write
    \begin{equation*}
    a= e_{11}+2e_{22}+3e_{33} \quad \text{ and } \quad b_1=    y_1e_{12}+e_{13}+
        y_1^*e_{21}+e_{23}+e_{31}+e_{32}+e_{33}.
    \end{equation*}
    Then \Cref{lemma7} implies that $W^*(y_1, \mathcal N_0)\subseteq W^*(a,b_1)$. By the assumption and \Cref{lemma3.2},
    \begin{equation*}
        W^*(y_1, \mathcal N_0)'\cap \mathcal M^\omega  \supseteq W^*(a+ib_1)' \cap \mathcal M^\omega\text{ is diffuse.}
    \end{equation*}
    Applying Lemma \ref{lemma6} for $\mathcal A=\mathcal N_0$ and $x=y_1$,  there exist an integer $m_1>0$, a type ${\rm I}_{m_1}$ subfactor $\mathcal N_1$ of $\mathcal N_0'\cap \mathcal M$, a system of matrix units $\{f_{i,j}^{(1)}\}_{i,j=1}^{m_1}$ of $\mathcal N_1$,  and a projection $q_1$ in $\mathcal M$ such that
    \begin{enumerate} [label= {\rm(\arabic*)} ]
      \item   $q_1 = q_1 (\sum_{i=2}^{m_1} f_{ i,i}^{(1)}) e_{11}= (\sum_{i=2}^{m_1} f_{i,i}^{(1)}) e_{11}q_1$;
      \item   $\operatorname{dist}_{\|\cdot\|_2} (y_1, W^*(q_1, \mathcal N_0,\mathcal N_1)) < \varepsilon /9$.
    \end{enumerate}
    This completes the first step of the induction proof.

    \noindent\textbf{Step Two.}  Assume that the claim is true for $r=k$, where $1\le k< 9n$, i.e., the desired $\{m_r\}_{r=1}^{k}$, $\{\mathcal N_r\}_{r=1}^{k}$,  $\{\{f_{i,j}^{(r)}\}_{i,j=1}^{m_r}\}_{r=1}^{k}$ and $\{q_r\}_{r=1}^{k}$ have been obtained. Notice that $\{\mathcal N_r\}_{r=1}^{k}$ is a family of commuting  subfactors in $\mathcal N_0^{\prime}\cap \mathcal M$. So $W^*(\mathcal N_1,\ldots, \mathcal N_k)$ is a subfactor of type {\rm I}$_{m_1\cdots m_k}$, which has two self-adjoint generators   $z_{1},z_{2}$. Moreover, $W^*(\mathcal N_0, \mathcal N_1,\ldots, \mathcal N_k)$ is   a subfactor of type {\rm I}$_{3m_1\cdots m_k}$ with a system of matrix units
    \begin{equation*}
        \{ f_{i_kj_k}^{(k)} f_{i_{k-1}j_{k-1}}^{(k-1)}\cdots
         f_{i_1j_1}^{(1)}e_{ij}  \}_{1\le i,j\le 3; 1\le i_1,j_1\le m_1; \ldots; 1\le i_k, j_k\le
         m_k}.
    \end{equation*}
    Define a self-adjoint operator $b_2$ in the form
    \begin{equation*}
        \begin{aligned}
            b_2= \  & z_{ 1}e_{11} & +  & y_{k+1}e_{12} &+ & e_{13} & +   \\
            & y_{k+1}^*e_{21} &+ & z_2e_{22} & + & e_{23} & +\\
            & e_{31} & + & e_{32} &   + & e_{33}.
    \end{aligned}
    \end{equation*}
    Recall that  $a= e_{11}+2e_{22}+3e_{33} $. Thus \Cref{lemma7} implies that
    \begin{equation} \label{subset-a+ib2}
        W^*(y_{k+1}, \mathcal N_k, \ldots, \mathcal N_1, \mathcal N_0) = W^*(y_{k+1}, z_{1},z_{2}, \mathcal N_0)\subseteq W^*(a+ib_2).
    \end{equation}
    By the assumption, we have that $W^*(a+ib_2)' \cap \mathcal M^\omega$ is diffuse. Combining this with \eqref{subset-a+ib2} and Lemma \ref{lemma3.2}, we obtain that
    \begin{equation*}
        W^*(y_{k+1}, \mathcal N_k, \ldots, \mathcal N_1, \mathcal N_0)'\cap \mathcal M^\omega    \text{ is diffuse.}
    \end{equation*}
    By  writing $\mathcal A=W^*(\mathcal N_0, \mathcal N_1,\ldots, \mathcal N_k)$ and $x=y_{k+1}$ in  Lemma \ref{lemma6},  there exist a positive integer $m_{k+1}$, a type {\rm I}$_{m_{k+1}}$ subfactor $\mathcal N_{k+1}$ of $W^*(\mathcal N_0,\mathcal N_1,\ldots,\mathcal N_k)'\cap \mathcal M$, a system of matrix units $\{f_{ij}^{(k+1)}\}_{i,j=1}^{m_{k+1}}$ of $\mathcal N_{k+1}$, and a projection $q_{k+1}$ in $\mathcal M$ such that
    \begin{enumerate}[label={\rm (\arabic*)}]
        \item   $q_{k+1} = q_{k+1} \left (\sum_{i=2}^{m_{k+1}} f_{i i}^{(k+1)}\right )f_{11}^{(k)}\cdots f_{1 1}^{(1)}e_{11} = \left (\sum_{i=2}^{m_{k+1}} f_{ i i}^{(k+1)} \right ) f_{11}^{(k)}\cdots f_{1 1}^{(1)}e_{11}q_{k+1}$;
        \item   $\operatorname{dist}_{\|\cdot\|_2} (y_{k+1}, W^*(q_{k+1}, \mathcal N_0,\mathcal N_1,\ldots, \mathcal N_{k+1})) < \varepsilon /9$.   
    \end{enumerate} 
    Thus, the claim is true for $r = k+1$, which completes the induction step and hence the proof of Claim \ref{Gamma-2}.1.

    (\emph{End of the proof of $\Cref{Gamma-2}$}.) By Claim \ref{Gamma-2}.1, we obtain $\{\mathcal N_r\}_{r=1}^{9n}$, $\{\{f_{ij}^{(r)} \}_{i,j=1}^{m_r}\} _{r=1}^{9n}$, and $\{q_r\}_{r=1}^{9n}$ with the properties as listed in Claim \ref{Gamma-2}.1.  Notice that $W^*(\mathcal N_1,\ldots, \mathcal N_{9n})$ is a subfactor of type ${\rm I}_{m_1\cdots m_{9n}}$. Assume that $\tilde z_{1},\tilde z_{2}$ are two self-adjoint generators of $W^*(\mathcal N_1,\mathcal N_2, \ldots, \mathcal N_{9n})$. The Conclusion \ref{claim3.8.1} of Claim \ref{Gamma-2}.1 entails that $q_1,\ldots, q_{9n}$ are mutually orthogonal  sub-projections of $e_{11}$. The spectral theorem for self-adjoint operators implies that
    \begin{equation*}
        \{  q_1, \ldots, q_{9n}\}\subseteq W^*(  q_1 + q_2/2 + \cdots+ q_{9n}/ 2^{9n }  ).
    \end{equation*}
    Recall that  $a=  e_{11} +2e_{22}+3e_{33}$.  Define a self-adjoint operator $b_3$ in the form
    \begin{equation*}
        \begin{aligned}
            b_3=    (q_1 + q_2/2 + \cdots+ q_{9n}/ 2^{9n } ) & e_{11} & +   \tilde z_{ 1}e_{12} & +   e_{13}+   \\
            \tilde z_{ 1}& e_{21} & + \tilde z_2e_{22} & + e_{23}+\\
            & e_{31}   & + \ \ e_{32}     & +e_{33}.
        \end{aligned}
    \end{equation*}
    A similar proof to \Cref{lemma7} yields that
    \begin{equation}  \label{subset-a+ib3}
        W^*(q_1,\ldots, q_{9n}, \mathcal N_0, \mathcal N_1,\ldots, \mathcal N_{9n})\subseteq W^*(a+ib_3).
    \end{equation}
    By Conclusion \ref{claim3.8.2} of Claim  \ref{Gamma-2}.1, there is an element $w_k$ in $ W^*(q_1,\ldots, q_{9n}, \mathcal N_0, \mathcal N_1,\ldots, \mathcal N_{9n})$ such that $\|y_k-w_k\|_2 < \varepsilon /9$ for each $1\le k\le 9n$. 
    Since $\{ y_{ij}^{(r)} \}_{1\le i,j\le 3; 1\le r\le n}$ was listed as $ \{y_k\}_{1\le k \le 9n}$, we can  rename $ \{w_k\}_{1\le k \le 9n}$ as $\{w_{ij}^{(r)}\}_{1\le i,j\le 3; 1\le r\le n}$ correspondingly with
    \begin{equation*}
        \|y_{ij}^{(r)}- w_{ij}^{(r)}\|_2 < \varepsilon /9.
    \end{equation*}
    Write $z = a + ib_3$ and $w_r = \sum^3_{i,j=1} w_{ij}^{(r)}e_{ij}$ for each $1\le r\le n$. It follows that
    \begin{equation*}
        \begin{aligned}
        \|x_r - w_r\|_2 & 
        \le \sum_{i,j=1}^3 \left\|(y_{ij}^{(r)}-w_{ij}^{(r)})e_{ij} \right\|_2  
        \le \sum_{i,j=1}^3 \left\| y_{ij}^{(r)}-w_{ij}^{(r)} \right\|_2 <\varepsilon.
        \end{aligned}
    \end{equation*}
    Since each $w_r$ lies in $W^*(z)$, we obtain that $\operatorname{dist}_{\Vert\cdot\Vert_2}(x_r,W^*(z)) < \varepsilon$ for every $r=1,\ldots,n$. This completes the proof. 
\end{proof}

If $\mathcal M$ is a singly generated type ${\rm II}_1$ factor, then, by \cite{Dixm}, that $\mathcal M$ has property $\Gamma$ is equivalent to $W^*(x)'\cap \mathcal M^\omega\ne \mathbb CI$ for all $x\in\mathcal M$. In fact  a more general statement, without the assumption on cardinality of generators of $\mathcal M$, is still valid. We develop this in the following proposition. 

\begin{proposition}\label{prop3.9-gamma}
    Let $\mathcal M$ be a type ${\rm II}_1$ factor with trace $\tau$. Then the following statements are equivalent:
    \begin{enumerate}[label={\rm (\roman*)}]
        \item \label{prop3.8.i}  $\mathcal M$ has property $\Gamma$.
        \item \label{prop3.8.ii}  For every $x$ in $\mathcal M$, $W^*(x)'\cap \mathcal M^\omega $ is  diffuse.
        \item \label{prop3.8.iii}  For every $x$ in $\mathcal M$ and every nonzero projection $p$ in $\mathcal M$, 
        $$W^*(p x p)'\cap (p\mathcal M p)^\omega \ne \mathbb C p. $$
        \item \label{prop3.8.iv}  For every $x $ in $\mathcal M$,  $W^*(x )'\cap \mathcal M^\omega \ne \mathbb C I$.
    \end{enumerate}
\end{proposition}

\begin{proof}  \ref{prop3.8.i} $\Rightarrow$ \ref{prop3.8.ii}.
    Assume that \ref{prop3.8.i} holds.  Let $x$ be an element in $\mathcal M$. By Proposition 7.1 in \cite{CPSS}, there exists a separable subfactor $\mathcal M_1$ with property $\Gamma$ such that $ x\in \mathcal M_1\subseteq \mathcal M$. It follows that $\mathcal M_1'\cap \mathcal M_1^\omega$ is diffuse (see \cite{Dixm} or Lemma \ref{may20Lemma2.5}). Hence, by Lemma \ref{lemma3.2},  $W^*(x )'\cap \mathcal M^\omega \supseteq  \mathcal M_1'\cap \mathcal M_1^\omega$ is diffuse, i.e., \ref{prop3.8.ii} is true.

    \ref{prop3.8.ii} $\Rightarrow$ \ref{prop3.8.i}.
    Assume that \ref{prop3.8.ii} holds. Suppose that $x_1,\ldots, x_n$ are elements in $\mathcal M$ and $\varepsilon>0$ is a positive number. Thus, we can directly follow the proof of \Cref{Gamma-2} and its notation. 
    By Conclusion \ref{claim3.8.2} of Claim \ref{Gamma-2}.1, there is an element $w_k$ in $ W^*(q_1,\ldots, q_{9n}, \mathcal N_0, \mathcal N_1,\ldots, \mathcal N_{9n})$ such that $\|y_k-w_k\|_2 < \varepsilon$ for each $1\le k\le 9n$. From \eqref{subset-a+ib3} and Lemma \ref{lemma3.2}, we obtain that  $ W^*(q_1,\ldots, q_{9n}, \mathcal N_0, \mathcal N_1,\ldots, \mathcal N_{9n})'\cap \mathcal M^\omega$ is diffuse. By the inclusion 
    \begin{equation*}
        W^*(q_1,\ldots, q_{9n}, \mathcal N_0, \mathcal N_1,\ldots, \mathcal N_{9n})'\cap \mathcal M^\omega \subseteq (\mathcal N_0'\cap \mathcal M)^\omega
    \end{equation*}
    there exists a unitary element $u$ in $\mathcal N_0'\cap \mathcal M$ such that $\tau(u)=0$ and $\|[w_k, u]\|_2 < \varepsilon $ for $1\le k\le 9n$. Rename $ \{w_r\}_{1\le r\le 9n}$ as $\{w_{ij}^{(r)}\}_{1\le i,j\le 3; 1\le r\le n}$ correspondingly such that
    \begin{equation*}
        \|y_{ij}^{(r)}- w_{ij}^{(r)}\|_2 < \varepsilon.
    \end{equation*}
    Therefore, for each $1\le r\le n$, it follows from \eqref{eq-xr-yrij} that
    \begin{equation*}
        \begin{aligned}
        \left\|[x_r, u]\right\|_2
        & \le \sum_{i,j=1}^3 \left\|[y_{ij}^{(r)}e_{ij}, u]\right\|_2 
        \le \sum_{i,j=1}^3 \left\|[y_{ij}^{(r)}, u]  \right\|_2 \\
        & \le \sum_{i,j=1}^3 \left\|[w_{ij}^{(r)}, u]  \right\|_2 + \sum_{i,j=1}^3 \left\|[y_{ij}^{(r)}-w_{ij}^{(r)}, u]  \right\|_2   
        < 27\varepsilon.
        \end{aligned}
    \end{equation*}
    By the definition of property $\Gamma$, \ref{prop3.8.i} of Proposition \ref{prop3.9-gamma} is proved.

    \ref{prop3.8.i} $\Rightarrow$ \ref{prop3.8.iii}.
    Assume that $\mathcal M$ has property $\Gamma$. Let $x$ be an element in $\mathcal M$ and $p$ a nonzero projection in $\mathcal M$. By virtue of Lemma \ref{lemma2.2}, $p\mathcal Mp$ has property $\Gamma$. This entails the inequality $W^*(p x p)'\cap (p\mathcal M p)^\omega \ne \mathbb C p$.

    \ref{prop3.8.iii} $\Rightarrow$  \ref{prop3.8.ii}.
    Let $x $ be an element in $\mathcal M$. Define $\mathcal N=W^*(x)'\cap \mathcal M$. If $\mathcal N$ contains no minimal projections, then $\mathcal N$ is diffuse, whence $W^*(x )'\cap \mathcal M^\omega \supseteq \mathcal N$ is diffuse by Lemma  \ref{lemma3.2}. Otherwise,   we assume that $\{p_{n}\}_{n=1}^N$ is a maximal family of mutually orthogonal minimal projections in $\mathcal N$, where $1\le N\le \infty$. Define $p_0=I-\sum_{n=1}^N p_n$. Then $p_0\mathcal Np_0$ is diffuse.   For each $n\ge 1$,  it is not hard to verify that $W^*(p_n x p_n )$ is an irreducible subfactor of $p_n \mathcal M p_n$. The assumption $W^*(p_n x p_n )'\cap (p_n \mathcal M p_n )^\omega\ne \mathbb Cp_n $ and Lemma \ref{may20Lemma2.5} guarantee that $W^*(p_n x p_n )'\cap (p_n \mathcal M p_n )^\omega$ is diffuse. Obviously,
    \begin{equation*}
        W^*(x )'\cap \mathcal M^\omega \supseteq p_0\mathcal Np_0 \oplus \Big( \oplus_{n=1}^N \big(W^*(p_nx p_n)'\cap (p_n\mathcal Mp_n)^\omega
        \big)\Big)\supseteq \mathbb CI.
    \end{equation*}
    This entails that $W^*(x )'\cap \mathcal M^\omega$ is diffuse, by virtue of Lemma \ref{lemma3.2}.

    \ref{prop3.8.iii} $\Rightarrow$ \ref{prop3.8.iv}.   This is trivial.

    \ref{prop3.8.iv} $\Rightarrow$ \ref{prop3.8.iii}.
    We use a contrapositive proof here. Assume that   \ref{prop3.8.iii}  is false. Thus there exist  two self-adjoint elements $y_1$ and $ y_2$ in $\mathcal M$ and a nonzero projection $p$ in $\mathcal M$ such that
    \begin{equation*}
        W^*(p(y_1+iy_2)p)'\cap (p\mathcal M p)^\omega = \mathbb C p.
            \end{equation*}
    Denote $p$ by $p_0$. As $\mathcal M$ is a type ${\rm II}_1$ factor, there exists a family of mutually orthogonal subprojections $p_1, \ldots, p_{k-1}, p_k$ of $I-p_0$ such that
    \begin{equation*}
        \tau(p_0)=\tau(p_1)=\cdots =\tau(p_{k-1})\ge \tau(p_k) \quad \text{ and } \quad p_0+p_1+\cdots +p_k=I.
    \end{equation*}
    Furthermore, there exists a family of partial isometries $v_1,\ldots, v_k$ in $\mathcal M$ such that
    \begin{align}\text{ $v_iv^*_i=p_0$, $v_i^*v_i=p_i$ for $1\le i\le k-1$ and
     $v_kv_k^*\le p_0$, $v_k^*v_k=p_k$.} \label{may21Equ2}\end{align}

    Without loss of generality, we assume that $\|p_0y_1p_0\|<1$. Define two self-adjoint elements $x_1$ and $x_2$ in $\mathcal M$ as follows:
    \begin{equation*}
        \begin{aligned}
            x_1 & =  (p_0+p_0y_1p_0)+ 2p_1+3p_2+ \cdots+ (k+1)p_k \quad \text{ and} \\
            x_2 & =  p_0y_2p_0 + (v_1 + v_2+ \cdots + v_k) + ( v_1 + v_2 + \cdots + v_k)^*.
        \end{aligned}
    \end{equation*}
    By spectral theory, we obtain that $p_0, p_1, \ldots, p_k, p_0y_1p_0$ are in $W^*(x_1)$ and $p_0y_2p_0, v_1,\ldots, v_k$ are in $W^*(x_1+i x_2)$.

    We claim that $W^*(x_1+i x_2)'\cap \mathcal M^\omega=\mathbb CI$. In fact, assume that $(q_n)_\omega$ is a projection in $W^*(x_1, x_2)'\cap \mathcal M^\omega$. From the fact that $p_0,  p_0y_1p_0$ are in $W^*(x_1)$, we conclude that
    \begin{align} 
        \text{$p_0 $ commutes with $(q_n)_\omega$ in $\mathcal M^\omega$,} \label{may21Equ1}
    \end{align} 
    whence $p_0(q_n)_\omega p_0$ is a projection in $\mathcal M^\omega$.   From the fact that  $p_0y_1p_0, p_0y_2p_0$ are in $W^*(x_1,x_2)$, it follows that $p_0y_1p_0$ and $p_0y_2p_0$ commute with $(q_n)_\omega$  in $\mathcal M^\omega$. Therefore
    \begin{equation*}
        p_0(q_{n})_\omega p_0\in W^*(p_0y_1p_0,p_0y_2p_0)'\cap (p_0\mathcal Mp_0)^\omega=\mathbb C
        p_0.
    \end{equation*}
    Thus $p_0(q_{n})_\omega p_0 =0$ or $p_0$. We proceed the proof by considering the following two cases.

    {\em Case $1$}. Assume that $p_0(q_{n})_\omega p_0 =0$. Since $v_1,\ldots, v_k$ are in $ W^*(x_1, x_2)$, it follows that $(q_n)_\omega v_i  =v_i (q_n)_\omega$. This, together with (\ref{may21Equ2})  and (\ref{may21Equ1}), implies that $ 0= p_0(q_n)_\omega p_0 v_i  = (q_n)_\omega v_i= v_i (q_{n })_\omega  $, whence $p_i(q_{n })_\omega = v_i^*v_i(q_{n })_\omega=0$ for all $1\le i\le k$. So $(q_{n })_\omega =\sum^{k}_{i=0}p_i(q_{n })_\omega=0$.

    {\em Case $2$}. Assume that $p_0(q_{n})_\omega p_0 =p_0$.  As $v_1,\ldots, v_k\in W^*(x_1, x_2)$, we have the equality $(q_n)_\omega v_i =v_i(q_n)_\omega $.  This, together with  (\ref{may21Equ2})  and (\ref{may21Equ1}), implies that $ v_i  = p_0v_i=p_0(q_{n})_\omega p_0 v_i  =  (q_{n})_\omega   v_i =  v_i (q_{n})_\omega  $, whence $p_i=v_i^*v_i=v_i^* v_i (q_{n})_\omega =p_i(q_{n})_\omega$ for all  $1\le i\le k$. It follows that $(q_{n })_\omega =\sum^{k}_{i=0}p_i(q_{n })_\omega  =\sum^{k}_{i=0}p_i=I$.

    In summary, we conclude that $(q_n)_\omega$ is either $0$ or $I$. Thus $W^*(x_1+i x_2)'\cap \mathcal M^\omega=\mathbb CI$, whence \ref{prop3.8.iv} is false.  This ends the proof of the implication \ref{prop3.8.iv} $\Rightarrow$ \ref{prop3.8.iii}.
  \end{proof}

Now Theorem \ref{thm3.1} follows directly from Proposition \ref{prop3.9-gamma}.

\begin{proof}[Proof of Theorem \ref{thm3.1}]
    The implication ``$\Rightarrow$'' is obvious.
    For the implication ``$\Leftarrow$'',    the assumption implies that $W^*(x )^{\prime}\cap \mathcal M^\omega\ne \mathbb CI$, for every $x $ in $\mathcal M$. Now   Proposition \ref{prop3.9-gamma} guarantees that $\mathcal M$ has property $\Gamma$.
\end{proof}

The following result, implied in  Theorem 2.1 in \cite{Connes}, is well known and its proof is sketched.

\begin{proposition}\label{prop3.9}
    Let $\mathcal M$ be a type ${\rm II}_1$ factor with    trace $\tau$. Let $x $ be an element in $\mathcal M$. Consider the following statements:
    \begin{enumerate}[label={\rm (\roman*)} ]
        \item \label{prop3.10.i}  For any given $\varepsilon>0$, for every nonzero projection $p\in\mathcal M$, there exists a nonzero sub-projection $q$ of $ p$ in $\mathcal M$ satisfying
        \begin{equation*}
            \tau(q) \leq \varepsilon \quad \mbox{ and } \quad \Vert [px p,q] \Vert_{2} \leq \varepsilon \cdot \Vert q\Vert_{2} ,
        \end{equation*}
        \item \label{prop3.10.ii}   $W^*(x )^{\prime}\cap \mathcal M^\omega\ne \mathbb CI$.
    \end{enumerate}
    Then the implication  \ref{prop3.10.i} $ \Rightarrow$ \ref{prop3.10.ii}  holds.
\end{proposition}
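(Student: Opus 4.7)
The plan is to produce, for each $n\in\mathbb N$, a projection $P_n\in\mathcal M$ with $\tau(P_n)\in[1/2,1/2+1/n]$ and $\|[x,P_n]\|_2\le\sqrt 2/n$; then $P:=(P_n)_\omega$ is a projection in $\mathcal M^\omega$ with $\tau_\omega(P)=1/2$ that commutes with $x$, hence with $W^*(x)$ (since it then also commutes with $x^*$ by self-adjointness of $P_n$), so $P$ is a nontrivial element of $W^*(x)'\cap\mathcal M^\omega$, proving \ref{prop3.10.ii}.

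To build $P_n$, I would iteratively apply hypothesis \ref{prop3.10.i}. Set $R_0:=I$. At a successor stage $\alpha+1$ with $\tau(R_\alpha)>1/2$, apply \ref{prop3.10.i} with $p=R_\alpha$ and $\epsilon=1/n$ to obtain a nonzero sub-projection $q_\alpha\le R_\alpha$ satisfying $\tau(q_\alpha)\le 1/n$ and $\|[R_\alpha xR_\alpha,q_\alpha]\|_2\le (1/n)\|q_\alpha\|_2$, then set $R_{\alpha+1}:=R_\alpha-q_\alpha$; at a limit stage $\lambda$ take $R_\lambda:=\inf_{\alpha<\lambda}R_\alpha$. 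Because the $q_\alpha$ are pairwise orthogonal nonzero projections in a type ${\rm II}_1$ factor with $\sum_\alpha\tau(q_\alpha)\le 1$, only countably many can occur, and the process terminates at some countable ordinal $\alpha^*$ with $\tau(R_{\alpha^*})\le 1/2$. Setting $P_n:=I-R_{\alpha^*}$, the trace increment at the last successor step is at most $1/n$, so $\tau(P_n)\in[1/2,1/2+1/n]$.

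The commutator bound rests on the identity $[R_\alpha xR_\alpha,q_\alpha]=R_{\alpha+1}xq_\alpha-q_\alpha xR_{\alpha+1}$, valid because $R_\alpha=R_{\alpha+1}+q_\alpha$ and $q_\alpha\le R_\alpha$; its two summands lie in the orthogonal corners $R_{\alpha+1}\mathcal M q_\alpha$ and $q_\alpha\mathcal M R_{\alpha+1}$, so each has $2$-norm at most $(1/n)\|q_\alpha\|_2$. Writing $R:=R_{\alpha^*}$ and noting $R\le R_{\alpha+1}$ for every $\alpha<\alpha^*$, one gets $\|Rxq_\alpha\|_2\le\|R_{\alpha+1}xq_\alpha\|_2\le(1/n)\|q_\alpha\|_2$ and similarly $\|q_\alpha xR\|_2\le(1/n)\|q_\alpha\|_2$. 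The $2\times 2$ block decomposition with respect to $(P_n,R)$ gives $[x,P_n]=RxP_n-P_nxR$; the pieces $\{Rxq_\alpha\}_\alpha$ are pairwise orthogonal in $L^2(\mathcal M,\tau)$ (since $q_\alpha q_\beta=0$ for $\alpha\ne\beta$), whence
\[
\|RxP_n\|_2^2=\sum_\alpha\|Rxq_\alpha\|_2^2\le\frac{1}{n^2}\sum_\alpha\tau(q_\alpha)\le\frac{1}{n^2},
\]
with the analogous bound for $\|P_nxR\|_2^2$. Since $RxP_n$ and $P_nxR$ are themselves $L^2$-orthogonal (as $RP_n=0$), the two estimates combine to give $\|[x,P_n]\|_2\le\sqrt 2/n$.

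The main technical point is that hypothesis \ref{prop3.10.i} only controls the local commutator $[R_\alpha xR_\alpha,q_\alpha]$ rather than $[x,q_\alpha]$ directly; the key device is that the residual factor $R_{\alpha+1}$ left over after removing $q_\alpha$ dominates the global terminal residual $R=R_{\alpha^*}$, and the residual-corner pieces $Rxq_\alpha$ are mutually $L^2$-orthogonal as $\alpha$ varies, so the individual estimates $(1/n)\|q_\alpha\|_2$ aggregate in the $\ell^2$-sense against $\sum_\alpha\tau(q_\alpha)\le 1$ instead of additively.
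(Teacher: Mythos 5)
Your proof is correct and uses essentially the same approach as the paper's: accumulate small subprojections $q_\alpha$ whose commutators with the residual corner of $x$ are $\|\cdot\|_2$-small, aggregate the commutator estimate via the $L^2$-orthogonality of the blocks to get a projection of trace about $1/2$ almost commuting with $x$, and pass to the ultrapower. The only real difference is cosmetic: the paper obtains the accumulated projection via Zorn's lemma on the set $\{e:\tau(e)\le 1/2,\ \|[x,e]\|_2\le\epsilon\|e\|_2\}$ and closes with a one-step augmentation contradiction, whereas you unwind the same accretion explicitly as a transfinite recursion and perform the $\ell^2$-summation all at once.
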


\begin{proof}
    Assume that \ref{prop3.10.i} holds. To prove \ref{prop3.10.ii}, it suffices to show that, for any $\varepsilon>0$, there exists a projection $q$ in $\mathcal M$ such that $\tau(q)=1/2$ and  $\|[x , q]\|_2\le \varepsilon$.

    Denote by $\P(\M)$ the set of all the projections in $\mathcal{M}$. Define $\mathcal S$ to be a subset of $\mathcal M$ in the following form:
    \begin{equation*}
        \SS =\{ e\in \P (\M) : 0< \tau(e)\leq \frac{1}{2} \text{ and } \Vert [x ,e] \Vert_{2} \leq \varepsilon \cdot \Vert e\Vert_{2}
        \}.
    \end{equation*}

    For projections $q_1$ and $q_2$ in $\mathcal S$, if $q_1$ is a sub-projection of $q_2$, then define $q_1\preceq q_2$. Thus, the binary relation ``$\preceq$'' is a partial order on $\mathcal S$.

    The Assumption \ref{prop3.10.i} implies that $\mathcal S$ contains a nonzero projection in $\mathcal M$. Moreover, since $\tau$ is normal, each totally ordered chain in $\mathcal S$ has an upper bound in $\mathcal S$. Thus, by Zorn's lemma, there exists a maximal element $q$ in $\mathcal S$.

    If $\tau(q)=1/2$, then the proof is completed. Assume, on the contrary, that $\tau(q)<1/2$. Then there exists a positive number $\delta>0$ with $\tau(q)+\delta<1/2$ and with $0<\delta<\varepsilon/2$. By applying Assumption (i) to $\delta>0$ and $I-q$, there exists a nonzero sub-projection $q_0$ of $I-q$ such that
    \begin{equation*}
        \tau(q_0)\le \delta \quad \mbox{ and } \quad \Vert [(I-q)x (I-q),q_0] \Vert_2 \leq \delta \Vert q_0
        \Vert_2.
    \end{equation*}
    Note that $\tau(q)<\tau(q+q_0)<1/2$ and
    \begin{eqnarray*}\label{q+q_0}
        \Vert [x ,q+q_0] \Vert^2_2&=&\Vert qx (I-q-q_0) \Vert^2_2+\Vert q_0x (I-q-q_0) \Vert^2_2  \\
        &&+\Vert (I-q-q_0)x  q \Vert^2_2+\Vert (I-q-q_0) x  q_0\Vert^2_2   \\
        &\leq & \Vert [x ,q] \Vert^2_2+\Vert [(I-q)x (I-q),q_0] \Vert^2_2   \\
        &\leq & \varepsilon^2\Vert q \Vert^{2}_{2}+\delta^2\Vert q_0 \Vert^{2}_{2}   
        \leq  \varepsilon^2\Vert q+q_0 \Vert^{2}_{2}.
    \end{eqnarray*}
    This implies that $q+q_0\in\mathcal S$. But $q+q_0\in\mathcal S$ contradicts the fact that $q$ is a maximal element in $\mathcal S$. This ends the proof of the proposition.
\end{proof}

A type ${\rm II}_1$ factor $\mathcal M$ with separable predual is called a {\em McDuff factor} if $\mathcal M\simeq \mathcal M \otimes \mathcal R$, where $\mathcal R$ is the hyperfinite II$_1$ factor with separable predual.
Here we provide an answer to Sherman's question in Problem 2.11 of \cite{Sherman}.

\begin{corollary}\label{Sherman}
    Let $n \ge 2$ be a positive integer and $\mathcal M$ a type ${\rm II}_1$ factor with separable predual. Then the following statements are equivalent:
    \begin{enumerate}[label={\rm (\roman*) }]
        \item  \label{cor3.11.i} $\mathcal M$ is a McDuff factor.
        \item  \label{cor3.11.ii}  For any $x\in \mathcal M$, $W^*(x)'\cap \mathcal M^\omega$ is a type ${\rm II}_1$ von Neumann algebra.
        \item  \label{cor3.11.iii}  For any $x\in\mathcal M$, $W^*(x)'\cap \mathcal M^\omega$  unitally contains $\mathcal M_n(\mathbb C)$.
        \item  \label{cor3.11.iv}  For any $x\in\mathcal M$, $W^*(x)'\cap \mathcal M^\omega$  is not abelian.
    \end{enumerate}
\end{corollary}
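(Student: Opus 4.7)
The plan is to prove the equivalences via the cycle (i) $\Rightarrow$ (ii) $\Rightarrow$ (iii) $\Rightarrow$ (iv) $\Rightarrow$ (i). The implication (iii) $\Rightarrow$ (iv) is immediate since $\mathcal M_n(\mathbb C)$ is non-abelian for $n \ge 2$, and (ii) $\Rightarrow$ (iii) follows from the fact that any type II$_1$ von Neumann algebra admits, via the center-valued trace, a unital decomposition $1 = e_1 + \cdots + e_n$ into $n$ equivalent projections with center-valued trace $1/n$; the partial isometries implementing these equivalences give a system of matrix units for a unital copy of $\mathcal M_n(\mathbb C)$.

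For (i) $\Rightarrow$ (ii), I would combine McDuff's theorem (via Connes), which gives that $\mathcal M'\cap \mathcal M^\omega$ is a II$_1$ factor containing the hyperfinite $\mathcal R$ unitally, with Proposition \ref{prop3.7}, which yields diffuseness of $W^*(x)'\cap \mathcal M^\omega$ from Property $\Gamma$. Since $\mathcal M'\cap \mathcal M^\omega \subseteq W^*(x)'\cap \mathcal M^\omega$ unitally, the latter contains a II$_1$ factor. A central-projection analysis, where a hypothetical non-zero type-I central summand $z$ of $W^*(x)'\cap \mathcal M^\omega$ would---because $z$ commutes with the contained II$_1$ factor---force a II$_1$ factor inside the type-I algebra $z\cdot(W^*(x)'\cap \mathcal M^\omega)$, contradicted by the structural features of $\mathcal M'\cap \mathcal M^\omega$ inside $\mathcal M^\omega$, forces $W^*(x)'\cap \mathcal M^\omega$ to be of type II$_1$.

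For (iv) $\Rightarrow$ (i), the main step, I would adapt the encoding technique used in the proof of Proposition \ref{prop3.7}. Since $\mathcal M$ has separable predual, it is finitely generated by self-adjoint elements $y_1, \ldots, y_N$ (for instance by Wogen's theorem). Applying the iterative construction of Claim \ref{claim3.8} to the $y_r$'s with approximation parameter $\epsilon > 0$ yields a single operator $T_\epsilon = a + ib_3 \in \mathcal M$ and a finite family of matrix subfactors such that $W^*(T_\epsilon)$ contains the matrix units exactly and each $y_r$ within $\epsilon$ in $2$-norm. Hypothesis (iv) applied to $T_\epsilon$ gives that $W^*(T_\epsilon)'\cap \mathcal M^\omega$ is non-abelian, so it contains a pair of elements whose commutator has positive $2$-norm after suitable normalization. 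Representing these by bounded sequences in $\mathcal M$ and letting $\epsilon = 1/k \to 0$ along a reindexed ultrafilter, I would extract two elements of $\mathcal M'\cap \mathcal M^\omega$ whose commutator is non-zero, forcing $\mathcal M'\cap \mathcal M^\omega$ to be non-abelian; Connes' characterization then gives that $\mathcal M$ is McDuff.

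The principal obstacle lies in (iv) $\Rightarrow$ (i): the non-abelian part of $W^*(T_\epsilon)'\cap \mathcal M^\omega$ could a priori be supported on a central projection of arbitrarily small trace, and one must ensure that the extracted pair has a uniform non-commutation constant independent of $\epsilon$ in order to survive the ultrafilter limit. A secondary technical issue is the ``type II$_1$'' upgrade in (i) $\Rightarrow$ (ii), since containing a II$_1$ factor unitally does not in general preclude a type-I direct summand in an ambient finite von Neumann algebra, so one must leverage the specific nature of $\mathcal M'\cap \mathcal M^\omega$ within $\mathcal M^\omega$ rather than treat it as an abstract II$_1$ factor containment.
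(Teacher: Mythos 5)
The forward implications (i) $\Rightarrow$ (ii) $\Rightarrow$ (iii) $\Rightarrow$ (iv) match the paper in spirit, though the paper simply declares them obvious, so your central-projection analysis for (i) $\Rightarrow$ (ii) is unneeded scaffolding (and it is indeed standard that $\mathcal M'\cap\mathcal M^\omega$ is a II$_1$ factor for McDuff $\mathcal M$, so $W^*(x)'\cap\mathcal M^\omega$, which contains it unitally and sits in the II$_1$ factor $\mathcal M^\omega$, is of type II$_1$).

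The genuine gap is in (iv) $\Rightarrow$ (i), and it is exactly the obstacle you flag yourself. Your plan re-runs the Claim \ref{claim3.8} machinery, producing generators $T_\epsilon$ with only $2$-norm-approximate control over the $y_r$'s, and then you need a \emph{uniform} non-commutation constant in $W^*(T_\epsilon)'\cap\mathcal M^\omega$ to survive the $\epsilon\to 0$ limit. Nothing in hypothesis (iv) supplies such uniformity, so the argument does not close. The paper's route avoids this entirely: from (iv) you immediately have condition (iv) of Proposition \ref{prop3.7}, hence Property $\Gamma$; then the key fact you are missing is Ge--Popa's theorem (Theorem 6.2 of \cite{PopaGe}) that a type II$_1$ factor \emph{with Property $\Gamma$ and separable predual is singly generated}. (Wogen's theorem, which you cite, concerns properly infinite algebras and does not give you a finite generating set for a II$_1$ factor; finite generation of general II$_1$ factors is not known.) Taking a single generator $y$ with $W^*(y)=\mathcal M$ and applying (iv) to $y$ directly gives that $\mathcal M'\cap\mathcal M^\omega=W^*(y)'\cap\mathcal M^\omega$ is non-abelian, so McDuff's theorem finishes the proof with no limit argument at all. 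So the fix is not to tighten your $\epsilon$-estimates but to invoke Proposition \ref{prop3.7} to get Property $\Gamma$ first and then use Ge--Popa to collapse to a single generator.
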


\begin{proof}
    \ref{cor3.11.i} $\Rightarrow$ \ref{cor3.11.ii} $\Rightarrow$ \ref{cor3.11.iii} $\Rightarrow$ \ref{cor3.11.iv}  is obvious.

     \ref{cor3.11.iv} $\Rightarrow$  \ref{cor3.11.i}.    The assumption (iv) implies that $W^*(x )^{\prime}\cap \mathcal M^\omega\ne \mathbb CI$, for every $x $ in $\mathcal M$. By   Proposition \ref{prop3.9-gamma}, $\mathcal M$ has property $\Gamma$. As $\mathcal M$ has a separable predual,  there exists an element $y$  in $\mathcal M$  such that $W^*(y)=\mathcal M$ by Theorem 6.2 of \cite{PopaGe}. Again from the assumption  \ref{cor3.11.iv}, $W^*(y)'\cap \mathcal M^\omega=\mathcal M'\cap \mathcal M^\omega$ is noncommutative. Now  \ref{cor3.11.i} follows from Theorem 3 of \cite{McDuff}.
\end{proof}

\section{An operator with  spectral gap in a  non-\texorpdfstring{$\Gamma$} \ \  type \texorpdfstring{${\rm II}_1$} \ \  factor}

In this section, we will show the existence of a single operator with spectral gap in a non-$\Gamma$ type  ${\rm II}_1$ factor. The main result, Theorem \ref{prop5.3},  will be proved by a series of lemmas. Let $\mathcal M$ be a type ${\rm II}_1$ factor with  trace $\tau$.

%

\begin{definition}\label{may14Def5.3}
    Let $\{q_k\}^{\infty}_{k=1}$ be a sequence of nonzero projections in $\mathcal M$. Define
    \begin{equation*}
        \mathcal A (\{q_k\}^{\infty}_{k=1}) =\{ x\in \mathcal M : \lim_{k\rightarrow \infty} \frac {\|[x,q_k]\|_2}{\|q_k\|_2} =0\}.
    \end{equation*}
\end{definition}

\begin{lemma}\label{may18Lemma4.2}
    If  $\{q_k\}^{\infty}_{k=1}$ is  a sequence of nonzero projections in $\mathcal M$, then $\mathcal A (\{q_k\}^{\infty}_{k=1})$ is a unital $C^*$-subalgebra of $\mathcal M$.
\end{lemma}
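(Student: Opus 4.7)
The plan is to verify the standard subalgebra axioms for the set
$\mathcal{A} = \mathcal{A}(\{q_k\}_{k=1}^\infty)$, exploiting the two basic facts that $\|[\,\cdot\,,q_k]\|_2$ is a seminorm and that left/right multiplication by $x\in\mathcal{M}$ is bounded in $\|\cdot\|_2$ by $\|x\|$. Since each step is essentially a one-line estimate, I would organize the verification to hit in turn: (a) unital and linear, (b) $*$-closed, (c) multiplicatively closed, (d) operator-norm closed.

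First I would note $I\in\mathcal{A}$ since $[I,q_k]=0$, and that for $x,y\in\mathcal{A}$ and $\lambda,\mu\in\mathbb{C}$, the bilinearity of the commutator together with the triangle inequality in $\|\cdot\|_2$ gives
\[
\frac{\|[\lambda x+\mu y,q_k]\|_2}{\|q_k\|_2} \le |\lambda|\,\frac{\|[x,q_k]\|_2}{\|q_k\|_2} + |\mu|\,\frac{\|[y,q_k]\|_2}{\|q_k\|_2}\xrightarrow{k\to\infty}0.
\]
For the $*$-operation, $[x^*,q_k]=-[x,q_k]^*$ and the trace is tracial, so $\|[x^*,q_k]\|_2=\|[x,q_k]\|_2$.

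Next, for multiplication I would use the derivation-style identity $[xy,q_k]=x[y,q_k]+[x,q_k]y$, combined with the standard inequalities $\|az\|_2\le\|a\|\,\|z\|_2$ and $\|za\|_2\le\|a\|\,\|z\|_2$ for $a\in\mathcal{M}$, $z\in L^2(\mathcal{M},\tau)$. This gives
\[
\frac{\|[xy,q_k]\|_2}{\|q_k\|_2}\le \|x\|\,\frac{\|[y,q_k]\|_2}{\|q_k\|_2} + \|y\|\,\frac{\|[x,q_k]\|_2}{\|q_k\|_2}\xrightarrow{k\to\infty}0.
\]

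The step most worth care (though still routine) is operator-norm closure. Suppose $x_n\in\mathcal{A}$ and $\|x_n-x\|\to0$. From $\|[x-x_n,q_k]\|_2\le 2\|x-x_n\|\,\|q_k\|_2$ (apply $\|az\|_2,\|za\|_2\le\|a\|\|z\|_2$ to $z=q_k$) together with the triangle inequality,
\[
\frac{\|[x,q_k]\|_2}{\|q_k\|_2} \le 2\|x-x_n\| + \frac{\|[x_n,q_k]\|_2}{\|q_k\|_2}.
\]
Taking $\limsup_{k\to\infty}$ bounds the left-hand side by $2\|x-x_n\|$, which tends to $0$ as $n\to\infty$; hence $x\in\mathcal{A}$. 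Combined with the previous steps, this shows $\mathcal{A}$ is a unital $*$-subalgebra of $\mathcal{M}$ that is closed in the operator norm, that is, a unital $\mathrm{C}^*$-subalgebra. The only "obstacle" worth mentioning is remembering to use the $\mathcal{M}$-module bound $\|az\|_2\le\|a\|\|z\|_2$ uniformly in both left and right multiplication, since the $2$-norm alone is not submultiplicative.
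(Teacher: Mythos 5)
Your proof is correct and follows essentially the same route as the paper's: both verify the seminorm estimates for linear combinations, adjoints, products via the derivation identity, and close under operator norm using the uniform bound $\|[a,q_k]\|_2\le 2\|a\|\,\|q_k\|_2$. No discrepancies to note.
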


\begin{proof}
    Apparently  $\mathcal A (\{q_k\}^{\infty}_{k=1})$ contains $0$ and $I$. Let $x$ and $y$ be in  $\mathcal A (\{q_k\}^{\infty}_{k=1})$ and $\alpha\in\mathbb C$. Observe that
    \begin{equation*}
        \begin{aligned}
        \|[x^*, q_k]\|_2 & =   \|[x, q_k]\|_2, \\
        \|[\alpha x+y, q_k]\|_2 &\le  |\alpha|\|[x, q_k]\|_2+  \|[y, q_k]\|_2, \\
        \|[xy,q_k]\|_2 &= \|xyq_k- q_kxy\|_2= \|x(yq_k-q_ky)+(xq_k-q_kx)y\|_2 \\
         &\le \|x(yq_k-q_ky)\|_2 + \|(xq_k-q_kx)y\|_2 \\
         &\le \|x\|\|[y, q_k]\|_2 + \|[x,q_k]\|_2\|y\|.
        \end{aligned}
    \end{equation*}
    It follows that   $\mathcal A (\{q_k\}^{\infty}_{k=1})$ is a $*$-algebra.

    Assume that $z\in\mathcal M$ is in the operator norm closure of  $\mathcal A (\{q_k\}^{\infty}_{k=1})$. For any $\delta>0$,  there exists an element $\tilde z$ in $\mathcal A (\{q_k\}^{\infty}_{k=1})$  such that $\|z-\tilde z\|\le \delta$. Hence
    \begin{equation*}
        \limsup_{k\rightarrow \infty} \frac {\|[z,q_k]\|_2}{\|q_k\|_2} \le  \limsup_{k\rightarrow \infty} \frac {\|[\tilde z,q_k]\|_2 + \|[(z-\tilde z), q_k]\|_2}{\|q_k\|_2} \le 0 + 2
        \delta.
    \end{equation*}
    It follows that $z\in \mathcal A (\{q_k\}^{\infty}_{k=1})$. Therefore, $\mathcal A (\{q_k\}^{\infty}_{k=1})$ is a unital $C^*$-subalgebra of $\mathcal M$.
\end{proof}

\begin{lemma}\label{may10Lem5.4}
    Let $\{q_k\}^{\infty}_{k=1}$ be  a sequence of nonzero projections in $\mathcal M$. If $p$ is a projection in $\mathcal A (\{q_k\}^{\infty}_{k=1})$, then
    \begin{equation*}
        \lim_{k\rightarrow \infty} \frac {\|[pxp, pq_kp]\|_2}{\|q_k\|_2}=0, \ \ \forall \ x\in \mathcal A
        (\{q_k\}^{\infty}_{k=1}).
    \end{equation*}
\end{lemma}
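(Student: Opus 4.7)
The plan is to expand the commutator $[pxp,\,pq_kp]$ directly, compare it with the ``obvious'' commutator $p[x,q_k]p$, and show that the difference is controlled entirely by $\|[p,q_k]\|_2$. Since both $x$ and $p$ lie in $\mathcal A(\{q_k\}_{k=1}^\infty)$, the quantities $\|[x,q_k]\|_2$ and $\|[p,q_k]\|_2$ are both $o(\|q_k\|_2)$, and combining these two estimates will yield the result.

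The key algebraic identity I would establish first is
\begin{equation*}
[pxp,\,pq_kp] \;=\; pxpq_kp - pq_kpxp \;=\; p[x,q_k]p \;-\; px(I-p)q_kp \;+\; pq_k(I-p)xp,
\end{equation*}
which follows from writing $pq_k = q_k - (I-p)q_k$ and $q_kp = q_k - q_k(I-p)$, inserting these into the two terms, and cancelling. This reduces the problem to bounding the off-diagonal blocks $(I-p)q_kp$ and $pq_k(I-p)$ of $q_k$ relative to $p$.

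The second step is the standard ``Pythagorean'' bound for a projection: writing $[p,q_k]=pq_k(I-p)-(I-p)q_kp$ and noting that these two summands are orthogonal in $L^2(\mathcal M,\tau)$ (because multiplying on the left by $p$ kills one and fixes the other, hence $\tau$ of the cross term vanishes), one obtains
\begin{equation*}
\|pq_k(I-p)\|_2^{\,2} + \|(I-p)q_kp\|_2^{\,2} \;=\; \|[p,q_k]\|_2^{\,2}.
\end{equation*}
Then, using $\|ab\|_2\le\|a\|_2\|b\|$ and $\|ab\|_2\le\|a\|\|b\|_2$ together with the identity above, I get the master estimate
\begin{equation*}
\|[pxp,\,pq_kp]\|_2 \;\le\; \|[x,q_k]\|_2 \;+\; 2\|x\|\cdot\|[p,q_k]\|_2.
\end{equation*}

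The final step is routine: divide by $\|q_k\|_2$ and take the limit $k\to\infty$; since $x,p\in\mathcal A(\{q_k\}_{k=1}^\infty)$, both $\|[x,q_k]\|_2/\|q_k\|_2$ and $\|[p,q_k]\|_2/\|q_k\|_2$ tend to zero by Definition \ref{may14Def5.3}, and the conclusion follows. There is no real obstacle here --- the only thing one has to be careful about is not losing a factor of $\|q_k\|_2$ when separating the $[x,q_k]$ contribution from the $[p,q_k]$ contribution; this is why it is essential to produce the identity displayed above, in which the ``error'' is expressed through $(I-p)q_kp$ and $pq_k(I-p)$ rather than through the raw commutator $[p,q_k]$ multiplied by $q_k$.
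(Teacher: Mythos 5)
Your proof is correct. The algebraic identity $[pxp,pq_kp]=p[x,q_k]p - px(I-p)q_kp + pq_k(I-p)xp$ checks out, as does the orthogonality $\|pq_k(I-p)\|_2^2 + \|(I-p)q_kp\|_2^2 = \|[p,q_k]\|_2^2$ (the cross term vanishes because $p(I-p)=0$), and the resulting master estimate $\|[pxp,pq_kp]\|_2 \le \|[x,q_k]\|_2 + 2\|x\|\,\|[p,q_k]\|_2$ gives the limit once you divide by $\|q_k\|_2$.

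The route is a mild variant of the paper's, not identical. The paper decomposes the same commutator as $pxp(q_kp-pq_k) + (pxpq_k - q_kpxp) + (q_kp-pq_k)pxp$, so the central term it needs to control is $[pxp,q_k]$; this forces an appeal to Lemma \ref{may18Lemma4.2} (that $\mathcal A(\{q_k\})$ is a ${\rm C}^*$-algebra, hence $pxp\in\mathcal A$) to conclude that $\|[pxp,q_k]\|_2/\|q_k\|_2\to 0$. Your decomposition isolates $p[x,q_k]p$ instead, so the only inputs you need are $x\in\mathcal A$ and $p\in\mathcal A$, which are the hypotheses as given --- you never need the algebraic closure of $\mathcal A$. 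This is a small but genuine simplification. The Pythagorean step you include is also somewhat more than is required: the bound $\|(I-p)q_kp\|_2 \le \|[p,q_k]\|_2$ follows at once from $(I-p)q_kp = -(I-p)[p,q_k]p$, without invoking orthogonality; but having the exact equality does no harm and may be useful elsewhere.
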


\begin{proof}
    For every $x$ in $\mathcal A (\{q_k\}^{\infty}_{k=1})$,
    \begin{equation*}
        \begin{aligned}
        \|[pxp,pq_kp]\|_2 
        & =\|pxp(q_kp-pq_k) + (pxpq_k- q_kpxp)+ (q_kp-pq_k)pxp\|_2\\
        &\le \|pxp\|\|[p, q_k]\|_2+ \|[pxp, q_k]\|_2 + \|[q_k, p]\|_2\|pxp\|.
        \end{aligned}
    \end{equation*}
    Since $p$ and $pxp$ are in $\mathcal A (\{q_k\}^{\infty}_{k=1})$,
    \begin{equation*}
        \lim_{k\rightarrow \infty} \frac {\|[pxp,pq_kp]\|_2
        }{\|q_k\|_2}=0.
    \end{equation*}
    This completes the proof.
\end{proof}

The following Lemma \ref{may14Lemma5.6} is prepared for Lemma \ref{may14Lemma5.7}.

\begin{lemma}\label{may14Lemma5.6}
    If $0\le y\le 1$ is an element in $\mathcal M$, then there exists a projection $e\in W^*(y)$ satisfying $\|y-e\|_2\le 2 \|y-y^2\|_2$. Moreover, $e$ can be chosen to be a subprojection of the range projection of $y$.
\end{lemma}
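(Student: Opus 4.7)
The plan is to use functional calculus on $y$ and choose $e$ to be a suitable spectral projection, then reduce the operator inequality to a scalar inequality on the spectrum.

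First, I would define $e := \chi_{[1/2, 1]}(y)$, the spectral projection of $y$ corresponding to the interval $[1/2,1]$. Since $y$ is self-adjoint with $\sigma(y) \subseteq [0,1]$, this is well-defined via Borel functional calculus, and $e \in W^*(y)$.

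Next, I would verify the pointwise scalar inequality
\begin{equation*}
  \bigl| t - \chi_{[1/2,1]}(t) \bigr| \;\le\; 2\,t(1-t), \qquad t \in [0,1].
\end{equation*}
This splits into two cases: for $t \in [0,1/2)$ the left side is $t$ and the inequality reduces to $1 \le 2(1-t)$, i.e. $t \le 1/2$, which holds; for $t \in [1/2, 1]$ the left side is $1-t$ and the inequality reduces to $1 \le 2t$, which also holds. Both cases thus follow from the interval constraint.

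Finally, I would promote this scalar bound to an operator bound using the functional calculus for $y$. Setting $f(t) = |t - \chi_{[1/2,1]}(t)|$ and $g(t) = 2t(1-t)$, the pointwise inequality $f \le g$ on $\sigma(y) \subseteq [0,1]$ gives $(y-e)^*(y-e) = f(y)^2 \le g(y)^2 = 4(y-y^2)^2$ as positive elements of $\mathcal M$. Applying the trace $\tau$ (which is positive) yields $\tau((y-e)^2) \le 4\,\tau((y-y^2)^2)$, and taking square roots gives $\|y-e\|_2 \le 2\|y-y^2\|_2$, as required. I do not expect any genuine obstacle here; the only subtlety is checking the scalar inequality at the two endpoints where $\chi_{[1/2,1]}$ jumps, which is handled by the case split above.
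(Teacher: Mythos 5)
Your proof is correct and follows essentially the same route as the paper: choose $e$ to be the spectral projection of $y$ over the half of $[0,1]$ near $1$, reduce to a scalar estimate on $[0,1]$ via functional calculus, and apply $\tau$. The paper phrases the scalar estimate as an integral inequality against the spectral measure induced by $\tau$, while you phrase it as an operator inequality $f(y)^2\le g(y)^2$ coming from a pointwise bound; these are the same argument in slightly different notation.
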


\begin{proof}
    According to the spectral theorem, the faithful normal tracial state $\tau$ induces a probability measure $\mu$ on $[0,1]$ such that
    \begin{equation*}
        \tau(f(y)) =\int_{[0,1]} f(t) \mathrm{~d} \mu(t)
    \end{equation*}
    for every bounded Borel function $f$ on $[0,1]$.
    Then
    \begin{equation*}
        \begin{aligned}
        \|y-y^2\|_2^2=\tau((y-y^2)^2)  & = \int_{[0,1]} |t-t^2|^2 \mathrm{~d} \mu \ge \frac 1 4  \int_{[0,1/2]} t^2 \mathrm{~d} \mu + \frac 1 4 \int_{(1/2,1]} (1-t)^2 \mathrm{~d} \mu. \\
        \end{aligned}
    \end{equation*}
    Let $e$ be the spectral projection of $y$ corresponding to the interval $(1/2, 1]$. It follows that
    \begin{equation*}
        \begin{aligned}
            \|y-e\|_2^2=\tau((y-e)^2) & =   \int_{[0,1]} |t-\chi_{(1/2,1]}(t)|^2 \mathrm{~d} \mu \\
            & =  \int_{[0,1/2]} t^2 \mathrm{~d} \mu +   \int_{(1/2,1]} (1-t)^2 \mathrm{~d} \mu \\
            & \le  4 \|y-y^2\|_2^2.
        \end{aligned}
    \end{equation*}
    Thus, $e$ is a projection in   $W^*(y)$ satisfying $\|y-e\|_2\le 2 \|y-y^2\|_2$.
\end{proof}

\begin{lemma}\label{may14Lemma5.7}
    Let $\{q_k\}^{\infty}_{k=1}$ be  a sequence of nonzero projections in $\mathcal M$. If $p$ is a projection in $\mathcal A (\{q_k\}^{\infty}_{k=1})$, then there exists a subprojection $e_k$ of $p$ for each $k\ge 1$ such that
    \begin{equation*}
        \lim_{k\rightarrow \infty} \frac
        {\|e_k-pq_kp\|_2}{\|q_k\|_2}=0.
    \end{equation*}
\end{lemma}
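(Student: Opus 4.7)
The plan is to reduce the statement to Lemma \ref{may14Lemma5.6} applied inside the type ${\rm II}_1$ factor $p\mathcal{M}p$, viewing the positive contractions $y_k := pq_kp$ as elements of that corner with $0 \le y_k \le p$. Lemma \ref{may14Lemma5.6} (applied in $p\mathcal{M}p$, whose identity is $p$) yields spectral projections $e_k \in W^{*}(y_k) \subseteq p\mathcal{M}p$, and in particular $e_k \le p$, satisfying
\[
\|y_k - e_k\|_2 \;\leq\; 2\,\|y_k - y_k^2\|_2.
\]
Thus it suffices to show that $\|y_k - y_k^2\|_2 \leq \|[p,q_k]\|_2$, since then the hypothesis $p \in \mathcal{A}(\{q_k\}_{k=1}^\infty)$ together with Definition \ref{may14Def5.3} will immediately produce the desired ratio limit.

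To establish the key bound, I would exploit the algebraic identity
\[
y_k - y_k^2 \;=\; pq_kp - pq_kpq_kp \;=\; p\,[p,q_k]\,q_k\,p,
\]
which is verified by expanding $[p,q_k]\,q_k = pq_k^2 - q_kpq_k = pq_k - q_kpq_k$ and then multiplying by $p$ on both sides, using the idempotences $p^2 = p$ and $q_k^2 = q_k$. The standard submultiplicative inequality $\|xyz\|_2 \leq \|x\|\,\|y\|_2\,\|z\|$ applied with $x=p$, $y=[p,q_k]$, $z = q_k p$ then gives
\[
\|y_k - y_k^2\|_2 \;\leq\; \|p\|\,\|[p,q_k]\|_2\,\|q_k p\| \;\leq\; \|[p,q_k]\|_2.
\]

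Combining the two displayed inequalities produces $\|e_k - pq_kp\|_2 \leq 2\,\|[p,q_k]\|_2$. Since $p \in \mathcal{A}(\{q_k\}_{k=1}^\infty)$ means precisely that $\|[p,q_k]\|_2/\|q_k\|_2 \to 0$, dividing by $\|q_k\|_2$ and taking $k\to\infty$ completes the proof. I do not foresee a serious obstacle; the only substantive ingredients are the commutator identity for $y_k - y_k^2$ (essentially a two-line computation) and the previously established Lemma \ref{may14Lemma5.6}. A minor bookkeeping point, handled automatically by working in $p\mathcal{M}p$, is ensuring that the projection $e_k$ truly satisfies $e_k \le p$; alternatively, one could apply Lemma \ref{may14Lemma5.6} in $\mathcal{M}$ and observe that the spectral projection of $y_k$ on $(1/2,1]$ is dominated by the support projection of $y_k$, which lies under $p$.
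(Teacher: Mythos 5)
Your proof is correct and follows essentially the same route as the paper: both bound $\|pq_kp-(pq_kp)^2\|_2$ by $\|[p,q_k]\|_2$ via the identity $pq_kp-(pq_kp)^2 = p[p,q_k]q_kp$, and then invoke Lemma \ref{may14Lemma5.6} applied to $y=pq_kp$ to produce the subprojection $e_k \le p$. The only cosmetic difference is that you frame the application of Lemma \ref{may14Lemma5.6} inside the corner $p\mathcal{M}p$, whereas the paper applies it in $\mathcal{M}$ directly and relies on the spectral projection for $(1/2,1]$ lying under the support of $pq_kp$, a point you also note.
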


\begin{proof}
    Note that
    \begin{equation*}
        \|pq_kp-pq_kpq_kp\|_2= \|  p [p, q_k]q_kp\|_2\le \|[p, q_k]\|_2.
    \end{equation*}
    Applying Lemma \ref{may14Lemma5.6} to $y=pq_kp$ in $p\mathcal{M}p$, we obtain a subprojection $e_k$ of $p$ such that
    \begin{equation*}
        \|pq_kp-e_k\|_2\le 2 \|pq_kp-(pq_kp)^2\|_2,
    \end{equation*}
    whence
    \begin{equation*}
        \lim_{k\rightarrow \infty}
        \frac {\|e_k-pq_kp\|_2}{\|q_k\|_2}\le\lim_{k\rightarrow \infty} \frac {2 \|pq_kp-(pq_kp)^2\|_2}{\|q_k\|_2} \le  \lim_{k\rightarrow \infty}
        \frac {2 \|[p, q_k]\|_2}{\|q_k\|_2} =0.
    \end{equation*}
    This completes the proof.
\end{proof}

The following Lemma \ref{may14Lemma5.75} is prepared for Lemma \ref{may10Lemma5.10}.

\begin{lemma}\label{may14Lemma5.75}
    Let $\mathcal{N}$, with $I_{\mathcal M}\in \mathcal N$,  be a type ${\rm I}_m$ subfactor of $\mathcal M$  and $\{p_{ij}\}_{i,j=1}^m$  be a system of matrix units of $\mathcal N$. For any $x\in \mathcal N'\cap \mathcal M$,
    \begin{equation*}
        \tau(xp_{11}) =\frac 1 m \tau(x).
    \end{equation*}
\end{lemma}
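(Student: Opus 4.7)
The plan is to exploit two features: that $x$ commutes with every matrix unit $p_{ij}$ (since $x\in\mathcal N'\cap\mathcal M$), and that the tracial state $\tau$ is invariant under cyclic permutation.

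First I would show that $\tau(xp_{ii})$ is independent of $i$. For each $1\le i\le m$, observe that $p_{1i}p_{i1}=p_{11}$ and $p_{i1}p_{1i}=p_{ii}$. Using the trace property together with the commutation $xp_{1i}=p_{1i}x$, one obtains
\begin{equation*}
\tau(xp_{11})=\tau(xp_{1i}p_{i1})=\tau(p_{i1}xp_{1i})=\tau(xp_{i1}p_{1i})=\tau(xp_{ii}).
\end{equation*}

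Next, since $\mathcal N$ contains $I_{\mathcal M}$ and $\{p_{ij}\}_{i,j=1}^m$ is a system of matrix units for the type ${\rm I}_m$ subfactor $\mathcal N$, we have $\sum_{i=1}^m p_{ii}=I_{\mathcal M}$. Therefore
\begin{equation*}
\tau(x)=\sum_{i=1}^m\tau(xp_{ii})=m\,\tau(xp_{11}),
\end{equation*}
which gives the desired identity $\tau(xp_{11})=\frac{1}{m}\tau(x)$.

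There is no real obstacle here; the entire argument is a routine two-line computation using the trace property and the fact that conjugation by the matrix-unit identities permutes the diagonal projections. The only point requiring a touch of care is ensuring $\sum_i p_{ii}=I_{\mathcal M}$, which is precisely guaranteed by the hypothesis $I_{\mathcal M}\in\mathcal N$.
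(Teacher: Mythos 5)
Your argument is correct and essentially identical to the paper's: both establish $\tau(xp_{ii})=\tau(xp_{11})$ for every $i$ using the matrix-unit identities $p_{1i}p_{i1}=p_{11}$, $p_{i1}p_{1i}=p_{ii}$, the cyclicity of $\tau$, and the commutation $x\in\mathcal N'\cap\mathcal M$, then sum over $i$ with $\sum_i p_{ii}=I_{\mathcal M}$.
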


\begin{proof}
    Notice that, for every $1\le i\le m$,
    \begin{equation*}
        \tau(p_{ii}x)= \tau(p_{i1}p_{1i}x) =\tau (p_{1i}x p_{i1})= \tau(p_{1i}p_{i1}x)
        =\tau(p_{11}x).
    \end{equation*}
    Hence
    \begin{equation*}
        \tau(x)= \sum_i \tau(p_{ii} x) = m\tau(p_{11}x).
    \end{equation*}
    This ends the proof.
\end{proof}


\begin{lemma}\label{may10Lemma5.10}
    Let $\{q_k\}^{\infty}_{k=1}\subseteq \M$ be  a sequence of nonzero projections. Suppose that $\mathcal N$, with $I_{\mathcal M}\in\mathcal N\subseteq \mathcal A (\{q_k\}^{\infty}_{k=1})$,  is a subfactor of type ${\rm I}_m$ for some positive integer $m$.  Let $\{p_{ij}\}_{ij=1}^m$ be a system of matrix units of $\mathcal N$. If $p$ is a projection in $\mathcal A (\{q_k\}^{\infty}_{k=1})$  with $p\ge p_{11}$, then
    \begin{equation*}
        \liminf_{k\rightarrow \infty} \frac {\|pq_kp\|_2^2}{\|q_k\|_2^2} \ge \frac 1
        m.
    \end{equation*}
\end{lemma}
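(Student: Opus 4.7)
The plan is to route through the $\tau$-preserving conditional expectation $E\colon\mathcal M\to \mathcal N'\cap\mathcal M$. Since $\mathcal N$ is a type ${\rm I}_m$ subfactor with matrix units $\{p_{ij}\}_{i,j=1}^m$, this expectation admits the explicit averaging formula
\[
E(x) \;=\; \frac{1}{m}\sum_{i,j=1}^m p_{ji}\, x\, p_{ij},\qquad x\in\mathcal M,
\]
whose $\mathcal N'$-valuedness, trace-preservation, and idempotence on $\mathcal N'\cap\mathcal M$ are routine checks via the relations $p_{ab}p_{cd}=\delta_{bc}p_{ad}$. The first step is to establish the approximation $\|q_k - E(q_k)\|_2 = o(\|q_k\|_2)$. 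Writing $q_k = \frac{1}{m}\sum_{i,j} p_{jj} q_k$ and using the telescoping identity $p_{jj}q_k - p_{ji}q_k p_{ij} = p_{ji}[p_{ij},q_k]$, one obtains
\[
\|q_k - E(q_k)\|_2 \;\le\; \frac{1}{m}\sum_{i,j=1}^m\|[p_{ij},q_k]\|_2,
\]
and each commutator is $o(\|q_k\|_2)$ because $p_{ij}\in\mathcal N\subseteq\mathcal A(\{q_k\}_{k=1}^\infty)$. By the $L^2(\mathcal M,\tau)$-orthogonality of the range of $E$ and its complement, $\|E(q_k)\|_2^2 = \|q_k\|_2^2 - \|q_k-E(q_k)\|_2^2 = (1-o(1))\|q_k\|_2^2$.

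Next, I extract the factor $1/m$ using $p\ge p_{11}$ and Lemma \ref{may14Lemma5.75}. From the contractive bound $\|pap\|_2\le\|a\|_2$ and the triangle inequality,
\[
\|pq_kp\|_2 \;\ge\; \|pE(q_k)p\|_2 - \|q_k-E(q_k)\|_2.
\]
Decomposing $p = p_{11}+(p-p_{11})$, the four corner blocks of $pE(q_k)p$ with respect to this decomposition are pairwise orthogonal in $L^2(\mathcal M,\tau)$, so $\|pE(q_k)p\|_2^2\ge \|p_{11}E(q_k)p_{11}\|_2^2$. Since $E(q_k)\in\mathcal N'\cap\mathcal M$ commutes with $p_{11}\in\mathcal N$, the right-hand side equals $\tau(p_{11}E(q_k)^2)$, and applying Lemma \ref{may14Lemma5.75} to $E(q_k)^2\in\mathcal N'\cap\mathcal M$ gives $\tau(p_{11}E(q_k)^2) = \tfrac{1}{m}\|E(q_k)\|_2^2$.

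Assembling these estimates yields
\[
\|pq_kp\|_2 \;\ge\; \tfrac{1}{\sqrt m}\|E(q_k)\|_2 - o(\|q_k\|_2) \;=\; \Big(\tfrac{1}{\sqrt m}\sqrt{1-o(1)}-o(1)\Big)\|q_k\|_2,
\]
and squaring and dividing by $\|q_k\|_2^2$ produces $\liminf_k \|pq_kp\|_2^2/\|q_k\|_2^2 \ge 1/m$. The main obstacle is precisely the Step 1 estimate: a more direct cyclic-permutation computation of $\tau(p_{11}q_k)$ only delivers $\frac{1}{m}\tau(q_k) + O(\epsilon_k\sqrt{\tau(q_k)})$, whose error is too coarse to survive division by $\|q_k\|_2^2 = \tau(q_k)$ when $\tau(q_k)\to 0$. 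Routing through the conditional expectation $E$ replaces this absolute error by a genuinely relative one, which is what makes the ratio bound work uniformly in the sizes of the projections $q_k$.
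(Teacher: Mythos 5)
Your proof is correct and follows essentially the same route as the paper's: both replace $q_k$ by its image under the $\tau$-preserving conditional expectation onto $\mathcal N'\cap\mathcal M$ (the paper writes this as the average $\frac{1}{|\mathcal U|}\sum_{u\in\mathcal U}uq_ku^*$ over a finite unitary group spanning $\mathcal N$, which is literally the same map as your $\frac{1}{m}\sum_{i,j}p_{ji}\,\cdot\,p_{ij}$), show the error is $o(\|q_k\|_2)$ using $\mathcal N\subseteq\mathcal A(\{q_k\})$, cut down from $p$ to $p_{11}$, and invoke Lemma~\ref{may14Lemma5.75} to extract the factor $1/m$. The only cosmetic differences are that the paper reduces $\tau(pq_kpq_kp)\ge\tau(p_{11}q_kp_{11}q_k)$ via two applications of $p\ge p_{11}$ and then substitutes $\tilde q_k$ with explicit error terms, whereas you substitute $E(q_k)$ up front using the triangle inequality and $L^2$-orthogonality of the corner blocks, but the content is identical.
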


\begin{proof}
    Let $\mathcal U$ be a finite group consisting of unitary elements in $\mathcal N$ such that $\mathcal N$ is a linear span of $\mathcal U$ (see Lemma 2.4.1 in \cite{SS2}). Define
    \begin{equation*}
        \tilde q_k = \frac 1 {|\mathcal U|} \sum _{u\in\mathcal U} uq_ku^*, \qquad \text{ for each } k\ge
        1,
    \end{equation*}
    where $|\mathcal U|$ is the cardinality of $\mathcal U$. It is easy to verify that
    $\tilde q_k$ is a positive element  in $\mathcal N'\cap \mathcal M$  for  $k\ge 1$ such that
    \begin{equation}\label{may15Equ4.1}
    \lim_{k\rightarrow \infty} \frac {\|q_k-\tilde q_k\|_2}{\|q_k\|_2} =0,
    \end{equation}
    whence
    \begin{equation}\label{may15Equ4.2}
    \|\tilde q_k\|_2 =\|q_k\|_2 + \varepsilon_k, \ \  \text{ where } \ \
    |\varepsilon_k| /\|q_k\|_2\rightarrow 0.
    \end{equation}
    Then
    \begin{align}
    \|pq_kp\|_2^2  & = \tau (pq_kpq_kp) \ge \tau(pq_kp_{11}q_kp)=\tau(p_{11}q_kpq_kp_{11})\ge\tau(p_{11}q_kp_{11}q_kp_{11})=\tau(p_{11}q_kp_{11}q_k) \notag\\
    & = \tau(p_{11} \tilde q_kp_{11}q_k)+ \tau(p_{11}(q_k-\tilde q_k)p_{11}q_k) \notag\\
    & =  \tau(p_{11} \tilde q_kp_{11} \tilde q_k) +  \tau(p_{11} \tilde q_kp_{11}(q_k -\tilde q_k))+  \tau((q_k-\tilde q_k)p_{11}q_kp_{11}) \notag\\
    & = \tau (p_{11} (\tilde q_k)^2)     +  \tau(p_{11} \tilde q_kp_{11}(q_k -\tilde q_k))+  \tau((q_k-\tilde q_k)p_{11}q_kp_{11})  \tag{\text{as $\tilde q_k\in \mathcal N'\cap \mathcal M$}}\\
    &\ge  \frac 1 m \|\tilde q_k\|_2^2- \|p_{11} \tilde q_kp_{11}\|_2\|q_k -\tilde q_k\|_2- \|q_k -\tilde q_k\|_2\|p_{11}q_kp_{11}\|_2 \tag{by Lemma \ref{may14Lemma5.75}}\\
    & \ge \frac 1 m (\|q_k\|_2+\varepsilon_k)^2 - 2(\|q_k\|_2+|\varepsilon_k|)\|q_k -\tilde q_k\|_2\notag\\
    &= \frac 1 m (\|q_k\|^2_2 + 2\varepsilon_k \|q_k\|_2+ \varepsilon_k^2) - 2(\|q_k\|_2+|\varepsilon_k|)\|q_k -\tilde q_k\|_2\notag
    \end{align}
    Hence, $(\ref{may15Equ4.1})$ and $(\ref{may15Equ4.2})$ guarantee
    \begin{equation*}
        \liminf_{k\rightarrow \infty} \frac {\|pq_kp\|_2^2}{\|q_k\|_2^2} \ge \frac 1
        m.
    \end{equation*}
    This completes the proof.
\end{proof}

Lemma \ref{may10Lem5.4}, Lemma \ref{may14Lemma5.7}, and Lemma \ref{may10Lemma5.10} are applied in the following Proposition \ref{may10Prop5.11}.

\begin{proposition}\label{may10Prop5.11}
    Let $\{q_k\}^{\infty}_{k=1}$ be  a sequence of nonzero projections in $\mathcal M$. Suppose that  $\mathcal N$, with $I_{\mathcal M}\in\mathcal N\subseteq \mathcal A (\{q_k\}^{\infty}_{k=1})$,   is a subfactor of type ${\rm I}_m$ for some positive integer $m$.  Let $\{p_{ij}\}_{ij=1}^m$ be a system of matrix units of $\mathcal N$. If $p$ is a projection in $\mathcal A (\{q_k\}^{\infty}_{k=1})$  with $p\ge p_{11}$, then there exists a   projection $e_k$ in $\mathcal M$  for each $k\ge 1$   such that
    \begin{enumerate}[label={\rm  (\roman*)}]
    \item \label{prop4.8.i} $e_k$ is a nonzero subprojection of $p$ when $k$ is large enough;
    \item  \label{prop4.8.ii}$\displaystyle \limsup_{k\rightarrow \infty} \frac {\|e_k\|_2}{\|q_k\|_2} \le 1 $;  and
    \item  \label{prop4.8.iii}
    $\displaystyle
    \lim_{k\rightarrow \infty} \frac {\|[pxp, e_k]\|_2}{\|e_k\|_2} =0, \qquad \text { for all } x\in \mathcal A (\{q_k\}^{\infty}_{k=1}).$
    \end{enumerate}
\end{proposition}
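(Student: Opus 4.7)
The plan is to take the candidate subprojections $e_k$ produced by Lemma \ref{may14Lemma5.7} applied to $p$, so that $e_k \le p$ and $\|e_k - pq_kp\|_2 / \|q_k\|_2 \to 0$. I would then verify items \ref{prop4.8.i}--\ref{prop4.8.iii} in turn, using the estimates prepared earlier in this section.

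For \ref{prop4.8.ii}, I would first observe the trivial bound
\begin{equation*}
\|pq_kp\|_2^2 \;=\; \tau(pq_kp \cdot pq_kp) \;\le\; \tau(pq_kp) \;=\; \tau(pq_k) \;\le\; \tau(q_k) \;=\; \|q_k\|_2^2,
\end{equation*}
so that $\|pq_kp\|_2 \le \|q_k\|_2$. Combined with $\|e_k\|_2 \le \|e_k - pq_kp\|_2 + \|pq_kp\|_2$ and the choice of $e_k$, this immediately yields $\limsup_k \|e_k\|_2/\|q_k\|_2 \le 1$.

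For \ref{prop4.8.i} and for the denominator in \ref{prop4.8.iii}, I would invoke Lemma \ref{may10Lemma5.10}, which gives $\liminf_k \|pq_kp\|_2^2/\|q_k\|_2^2 \ge 1/m$. Combined with the reverse triangle inequality $\|e_k\|_2 \ge \|pq_kp\|_2 - \|e_k - pq_kp\|_2$ and $\|e_k - pq_kp\|_2 = o(\|q_k\|_2)$, I would conclude that $\|e_k\|_2 \ge \|q_k\|_2/(2\sqrt{m})$ for all sufficiently large $k$. Since each $q_k$ is nonzero, $\|q_k\|_2 > 0$, and hence $e_k \ne 0$ for large $k$; this establishes \ref{prop4.8.i} and, at the same time, gives the crucial lower bound $\|e_k\|_2 \ge \|q_k\|_2/(2\sqrt{m})$ needed below.

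Finally, for \ref{prop4.8.iii}, fix $x \in \mathcal A(\{q_k\}^\infty_{k=1})$ and split the commutator as
\begin{equation*}
[pxp, e_k] \;=\; [pxp, pq_kp] + [pxp, e_k - pq_kp].
\end{equation*}
The first term is $o(\|q_k\|_2)$ by Lemma \ref{may10Lem5.4}, and the second is bounded by $2\|pxp\|\cdot \|e_k - pq_kp\|_2 = o(\|q_k\|_2)$ by the choice of $e_k$. Dividing by $\|e_k\|_2 \ge \|q_k\|_2/(2\sqrt{m})$ yields the desired $\|[pxp, e_k]\|_2/\|e_k\|_2 \to 0$. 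The argument is a routine assembly of Lemmas \ref{may10Lem5.4}, \ref{may14Lemma5.7}, and \ref{may10Lemma5.10}, and no substantive obstacle is anticipated; the only care required is to keep $\|e_k\|_2$ comparable to $\|q_k\|_2$ so that the normalisation in \ref{prop4.8.iii} does not destroy the vanishing of the numerator.
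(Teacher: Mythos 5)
Your proof is correct and follows essentially the same route as the paper: take the $e_k$ from Lemma~\ref{may14Lemma5.7}, get the upper bound in \ref{prop4.8.ii} from $\|pq_kp\|_2\le\|q_k\|_2$ plus the triangle inequality, get the lower bound $\|e_k\|_2\gtrsim\|q_k\|_2/\sqrt m$ from Lemma~\ref{may10Lemma5.10} for \ref{prop4.8.i}, and then split the commutator via Lemma~\ref{may10Lem5.4} and divide by that lower bound for \ref{prop4.8.iii}. The only cosmetic difference is that you spell out the bound $\|pq_kp\|_2\le\|q_k\|_2$, which the paper leaves implicit.
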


\begin{proof}
    By Lemma \ref{may14Lemma5.7},  there exists a subprojection $e_k$ of $p$ for each $k\ge 1$ such that
    \begin{equation}\label{may10Equ5.1}
        \lim_{k\rightarrow \infty} \frac {\|e_k-pq_kp\|_2}{\|q_k\|_2 }=0.
    \end{equation} Thus
    \begin{equation*}
        \limsup_{k\rightarrow \infty} \frac { \|e_k\|_2}{\|q_k\|_2}  \le \limsup_{k\rightarrow \infty}\frac {\|pq_kp\|_2+ \|pq_kp- e_k\|_2}{\|q_k\|_2} \le
        1,
    \end{equation*}
    which means that  \ref{prop4.8.ii} holds. By Lemma \ref{may10Lemma5.10},
    \begin{equation}\label{may10Equ5.2}
        \liminf_{k\rightarrow \infty} \frac { \|e_k\|_2}{\|q_k\|_2} \ge \liminf_{k\rightarrow \infty}\frac {\|pq_kp\|_2- \|pq_kp- e_k\|_2}{\|q_k\|_2}\ge \frac 1 {\sqrt m}.
    \end{equation}
    This means that   $e_k$ is nonzero when  $k$ is large enough. Thus,  \ref{prop4.8.i} is true. Moreover, for every $ x\in \mathcal A (\{q_k\}^{\infty}_{k=1})$, from Lemma \ref{may10Lem5.4}, the limit in (\ref{may10Equ5.1}), and the inequality in  (\ref{may10Equ5.2}), it follows that
    \begin{equation*}
        \begin{aligned}
            \limsup_{k\rightarrow \infty}\frac {\|[pxp, e_k]\|_2  }{\|e_k\|_2} & \le \limsup_{k\rightarrow \infty}  \frac {\|[pxp, pq_kp]\|_2 +   \|[pxp, e_k- pq_kp]\|_2  }{\|q_k\|_2} \cdot \frac {\|q_k\|_2}{\|e_k\|_2} \notag\\
            &\le \limsup_{k\rightarrow \infty} \frac {\|[pxp, pq_kp]\|_2 +  2 \|pxp\|\| e_k- pq_kp\|_2  }{\|q_k\|_2} \cdot \frac {\|q_k\|_2}{\|e_k\|_2}\notag\\&
            =0.
        \end{aligned}
    \end{equation*}
    This completes the proof.
\end{proof}

We are now ready to prove the main result in this section.  By $\mathcal{P}(\mathcal{M})$ we denote the set of all the projections in $\mathcal{M}$.

\begin{theorem}\label{prop5.3}
  Let $\mathcal M$ be a {non-$\Gamma$} type ${\rm II}_1$  factor with  trace $\tau$. Then there exist two self-adjoint elements $x_1$, $x_2$ in $\mathcal M$ and a positive number $\alpha>0$ such that
  \begin{equation*}
    \|[x_1, e]\|_2+\|[x_2, e]\|_2\ge \alpha \|e\|_2\|I-e\|_2, \qquad \forall \ e \in \P(\mathcal
    M).
  \end{equation*}
\end{theorem}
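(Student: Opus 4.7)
The plan is to use Proposition~\ref{prop3.7}\ref{prop3.8.iv} to extract, from the non-$\Gamma$ assumption, an element $x\in\mathcal M$ with $W^*(x)'\cap\mathcal M^\omega=\mathbb C I$, to take $x_1,x_2$ as the self-adjoint and skew-adjoint parts of $x$ (so $x=x_1+ix_2$), and to show that this pair satisfies the desired inequality for some $\alpha>0$. Note this choice also gives $W^*(x)'\cap\mathcal M\subseteq W^*(x)'\cap\mathcal M^\omega=\mathbb C I$, so $W^*(x)$ is irreducible in $\mathcal M$, which rules out the trivial obstruction of a nontrivial projection exactly commuting with $x$.

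I argue by contradiction. If no such $\alpha>0$ works, one can extract a sequence of projections $\{e_n\}$ with $\|[x_1,e_n]\|_2+\|[x_2,e_n]\|_2<(1/n)\|e_n\|_2\|I-e_n\|_2$, so by the triangle inequality $\|[x,e_n]\|_2<(1/n)\|e_n\|_2\|I-e_n\|_2$ as well. After passing to a subsequence and replacing $e_n$ by $I-e_n$ if necessary, I may assume $\tau(e_n)\to t_0\in[0,1/2]$. The non-degenerate case $t_0>0$ is immediate: $\|e_n\|_2\|I-e_n\|_2$ stays bounded below, forcing $\|[x,e_n]\|_2\to 0$, so $(e_n)_\omega$ is a projection in $\mathcal M^\omega$ with trace $t_0\in(0,1/2]$ commuting with $x$, contradicting the choice of $x$.

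The main obstacle is the degenerate case $t_0=0$: here $\tau(e_n)\to 0$ and $(e_n)_\omega=0$ in $\mathcal M^\omega$, so no nontrivial element is produced directly. However, the relative bound $\|[x_i,e_n]\|_2/\|e_n\|_2\to 0$ places $x_1,x_2$ and hence $x$ in the unital C$^*$-algebra $\mathcal A(\{e_n\})$ of Definition~\ref{may14Def5.3} (using Lemma~\ref{may18Lemma4.2}). To derive a contradiction I would verify hypothesis \ref{prop3.10.i} of Proposition~\ref{prop3.9} for $x$, thereby forcing $W^*(x)'\cap\mathcal M^\omega\ne\mathbb C I$. Given $\epsilon>0$ and a nonzero projection $p\in\mathcal M$, I must produce a nonzero subprojection $q\le p$ with $\tau(q)\le\epsilon$ and $\|[pxp,q]\|_2\le\epsilon\|q\|_2$; this is exactly the output of Proposition~\ref{may10Prop5.11}, applied with a type I$_m$ subfactor $\mathcal N\subseteq\mathcal A(\{e_n\})$ (for $m$ large enough that $1/m<\epsilon$) and with $p$ (or a suitable approximation of it) lying in $\mathcal A(\{e_n\})$ and dominating one of the matrix units of $\mathcal N$.

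The delicate step, and where the real work lies, is arranging $\mathcal N$ and a replacement of $p$ to sit inside $\mathcal A(\{e_n\})$ for an arbitrary prescribed projection $p\in\mathcal M$, since $\mathcal A(\{e_n\})$ is only a norm-closed $C^*$-subalgebra rather than a von Neumann subalgebra and thus does not automatically contain the projections of interest. Lemmas~\ref{may14Lemma5.7} and \ref{may10Lemma5.10} are built exactly for this: the first converts approximately-central elements $pq_kp$ into honest subprojections, and the second controls their $2$-norm from below. Exploiting that $\tau(e_n)\to 0$, one can select matrix units of trace $1/m$ that are approximately absorbed by $e_n$ in the relative $2$-norm, thereby realising the required subfactor and adjusted projection inside $\mathcal A(\{e_n\})$; feeding this into Proposition~\ref{may10Prop5.11} then yields the desired $q$, completing the verification of Proposition~\ref{prop3.9}\ref{prop3.10.i} and the proof.
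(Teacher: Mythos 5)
Your outer contradiction structure, the split by $t_0 = \lim\tau(e_n)$, and the disposal of the easy case $t_0>0$ via $(e_n)_\omega$ are all sound, and choosing $x$ with $W^*(x)'\cap\mathcal M^\omega=\mathbb CI$ via Proposition~\ref{prop3.7} is exactly the paper's starting point. The gap is in the hard case $t_0=0$, and it sits precisely at the step you flag as delicate. You want to verify hypothesis~\ref{prop3.10.i} of Proposition~\ref{prop3.9} for $x$, which is a statement quantified over \emph{all} nonzero projections $p\in\mathcal M$, by invoking Proposition~\ref{may10Prop5.11}. But that proposition needs a type ${\rm I}_m$ subfactor $\mathcal N$ with $I\in\mathcal N\subseteq\mathcal A(\{e_n\})$ and a projection in $\mathcal A(\{e_n\})$ dominating a minimal projection of $\mathcal N$. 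For an arbitrary prescribed $p$, neither ingredient is available: $\mathcal A(\{e_n\})$ is only a norm-closed unital ${\rm C}^*$-subalgebra, and it has no reason to contain any type ${\rm I}_m$ subfactor, let alone one compatible with an externally prescribed $p$; the condition $\|[p_{ij},e_n]\|_2/\|e_n\|_2\to 0$ is a strong approximate-commutation requirement, and $\tau(e_n)\to 0$ alone gives you no way to arrange it for a fixed system of matrix units, much less one with $1/m<\epsilon$ and a minimal projection under $p$. Lemmas~\ref{may14Lemma5.7} and~\ref{may10Lemma5.10} do not help here: they are applied \emph{after} the projection and the subfactor are already inside $\mathcal A(\{e_n\})$, not to place them there.

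The paper avoids this obstruction by not trying to verify~\ref{prop3.10.i} for $x$ at all. It uses Proposition~\ref{prop3.9} in contrapositive form to extract a \emph{specific} pair $(\epsilon,p)$ violating~\ref{prop3.10.i}, and treats two cases. When $p=I$ the argument is essentially your easy case. When $p\ne I$, the crucial move you are missing is to \emph{re-engineer the generators}: choose a type ${\rm I}_m$ subfactor $\mathcal N$ with a minimal projection $p_{11}\le p$, take a fresh non-$\Gamma$ pair $y_1,y_2$ in the corner $\mathcal N'\cap\mathcal M$ (using Lemma~\ref{lemma2.2} and Proposition~\ref{prop3.7} again), and package $p$, $px_1p$, $px_2p$, $\mathcal N$, $y_1$, $y_2$ into two self-adjoint elements $z_1,z_2$ so that $\{p,px_1p,px_2p,\mathcal N\}\subseteq{\rm C}^*(z_1,z_2)$ and $W^*(z_1+iz_2)'\cap\mathcal M^\omega=\mathbb CI$. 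The proof of the theorem then runs the contradiction for $z_1,z_2$ rather than $x_1,x_2$: any violating sequence $q_k$ for $z_1,z_2$ automatically satisfies $\{p,px_1p,px_2p,\mathcal N\}\subseteq\mathcal A(\{q_k\})$ because these elements lie in ${\rm C}^*(z_1,z_2)$, and only then does Proposition~\ref{may10Prop5.11} apply, producing the subprojections of $p$ that contradict the chosen $(\epsilon,p)$. Without this encoding step, the path from your $\{e_n\}$ to Proposition~\ref{prop3.9}\ref{prop3.10.i} does not close, so the proposal as written does not establish the theorem.
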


\begin{proof}
    By Proposition \ref{prop3.9-gamma}, there exist two self-adjoint elements $x_1$ and $x_2$ in $\mathcal M$ such that $W^*(x_1+i x_2)'\cap \mathcal M^{\omega}= \mathbb CI$. By Proposition \ref{prop3.9}, there exist an $\varepsilon>0$ and a nonzero projection $p$ in $\mathcal M$ such that for every nonzero subprojection $e$ of $p$,
    \begin{equation}\label{equ5.1}
    \text{either  \quad $\tau(e)>\varepsilon$ \quad or \quad  $\|[px_1p, e]\|_2+ \|[px_2p, e]\|_2>\varepsilon \|e\|_2 $.}
    \end{equation}

    If $p=I$, we claim that there exists an $\alpha$ such that $x_1$, $x_2$, and $\alpha$ have the desired property stated in the theorem.   Actually, assume, to the contrary, that there is no such an $\alpha$. Thus for each $\alpha=1/k$, there exists a projection  $q_k$  in $\mathcal M$ such that
    \begin{equation*}
        \|[x_1, q_k]\|_2+\|[x_2, q_k]\|_2 < \frac { \|q_k\|_2\|I-q_k\|_2} {k}.
    \end{equation*}
    Obviously, $0<\tau(q_k)<1$. Replacing $q_k$ by $I-q_k$ if needed, we assume that
    \begin{equation}\label{may16Equ5.6}
        \text {$0<\tau(q_k)\le 1/2$ \ for \ $k\ge 1$ \ \ and }  \ \  \lim_{k\to \infty} \frac { \|[x_1, q_k]\|_2+\|[x_2, q_k]\|_2}{\|q_k\|_2}=0.
    \end{equation}
    Notice that $W^*(x_1+i x_2)'\cap \mathcal M^{\omega}= \mathbb CI$.
    We have that
    \begin{equation}\label{may10Equ5.4}
        \lim_{k\to \omega} \|q_k\|_2 =0.
    \end{equation}
    Since we have assumed that $p=I$, we obtain that (\ref{may16Equ5.6}) and (\ref{may10Equ5.4}) contradict with (\ref{equ5.1}). This ends the proof in this case.

    Now we assume that $p\ne I$. Let $m\in\mathbb N$ be such that $2/m <\min\{\tau(p), \tau(I-p)\}$. Let $i_0$ be an integer such that $(i_0-1)/m\le \tau(p)<i_0/m$. Then $3\le i_0\le m-2$. Assume that $p$ and $px_1p$ are contained in a masa $\mathcal W$ of $\mathcal M$. Let $\{p_i\}_{i=1}^m$ be a family of mutually orthogonal   projections in $\mathcal W$ such that
    \begin{enumerate}[label={\rm (\alph*) }]
        \item   $p_1+\cdots+p_m=I$ and $\tau(p_i)=1/m$ for $1\le i\le m$;
        \item   $p_1, p_i, \ldots, p_{i_0-1}$ are subprojections of $p$;
        \item  $p_{i_0+1}, \ldots,p_{m-1}, p_m$ are subprojections of $I-p$.
    \end{enumerate}
    Let $\mathcal N$ be a type ${\rm I}_m$ subfactor of $\mathcal M$ with a system of matrix units $\{p_{ij}\}_{i,j=1}^m$ such that $p_{ii}=p_i$ for $1\le i\le m$. Let $\mathcal P=\mathcal N'\cap \mathcal M$. Then $\mathcal M  \cong \mathcal P\otimes \mathcal N$. By Lemma \ref{lemma2.2}, we obtain that $\mathcal P$ is also a type ${\rm II}_1$ factor without property $\Gamma$. Proposition \ref{prop3.9-gamma} implies there exist two self-adjoint elements $y_1, y_2$ in $\mathcal P$ such that $W^*(y_1, y_2)'\cap \mathcal P^{\omega}= \mathbb CI$. From Lemma \ref{lemma1}, it follows that
    \begin{equation}\label{equ5.2}
        W^*(y_1,y_2, \mathcal N)'\cap \mathcal M^\omega \cong  ( W^*(y_1,y_2)\otimes\mathcal N)'\cap (\mathcal P\otimes \mathcal N)^\omega \cong  W^*(y_1,y_2 )'\cap \mathcal P^\omega= \mathbb CI.
    \end{equation}
    Define $p_{i_0, 1}= pp_{i_0}$ and $p_{i_0, 2}:= (I-p)p_{i_0}$.

    Without loss of generality, we assume that $\|px_1p\|<1/10, \|y_1\|<1/10$ and $\|y_2\|<1/10$. Let $z_1$ and $z_2$ be elements in $\mathcal{M}$ defined as follows:
    \begin{equation*}
        \begin{aligned}
       z_1 & = \left (\sum_{j=1}^{i_0-1}   (2^jp_{j}+ p_{j}x_1p_{j})\right ) +   \left (2^{i_0} p_{i_0,1}+ p_{i_0,1}x_1p_{i_0,1} \right ) + (2^{i_0}+1)p_{i_0,2}   \\& \qquad \qquad \qquad   + \left (\sum_{j= i_0+1}^{m-2}    2^jp_{j} \right )    + (2^{m-1}p_{m-1}+p_{m-1}y_1)+ (2^{m}p_{m}+p_{m}y_2) \\
       z_2  &=  px_2p+ (p_{1m}+\cdots +p_{m-1, {m}}) + (p_{ m1}+\cdots +p_{m , {m-1}}) + p_m.
    \end{aligned}
    \end{equation*}
    Notice that
    \begin{equation}\label{e5.3}
        \{p_1,\ldots, p_{m},  p_{i_0,1},  p_{i_0,2}, p, px_1p, I-p\}\subseteq \mathcal W \text{ and } px_1p= (\sum_{j=1}^{i_0-1} p_ix_1p_i)+  p_{i_0,1}x_1p_{i_0,1} .
    \end{equation}
    By Lemma \ref{lemma5.2}, we have that
    \begin{equation*}
        \{p_1, \ldots, p_m, p_{i_0,1}, p_{i_0,2}, p, px_1p, p_{m-1}y_1, p_my_2\}\subseteq {C}^*(z_1).
    \end{equation*}
    From $p_i z_2p_m$ and $pz_2p$, we obtain that $\{p_{1m}, p_{2m},\ldots, p_{m {m}}, px_2p\}\subseteq C^*(z_1,z_2)$, whence
    \begin{equation}\label {equ5.3}
        \{p,   px_1p, px_2p, y_1, y_2,\mathcal N\}\subseteq C^*(z_1,z_2).
    \end{equation}
    Combining (\ref{equ5.2}) and  (\ref{equ5.3}), we have that
    \begin{equation}\label{equ5.6}
    \mathbb CI =  W^*(y_1,y_2, \mathcal N)'\cap \mathcal M^\omega \supseteq C^*(z_1,z_2)'\cap \mathcal M^\omega.
    \end{equation}
    To finish the proof of the theorem, it suffices to show the following claim.

    \begin{claim}
        There exists a constant $\alpha>0$ such that, for every projection $e$ in $\mathcal M$,
        \begin{equation*}
            \|[z_1,e]\|_2+ \|[z_2,e]\|_2\ge \alpha \|e\|_2\|I-e\|_2.
                    \end{equation*}
    \end{claim}

    \begin{proof}[Proof of the Claim]
    Assume, to the contrary, that for each $\alpha=1/k$ there exists a projection  $q_k$  in $\mathcal M$ such that
    \begin{equation*}
        \|[z_1, q_k]\|_2+\|[z_2, q_k]\|_2 < \frac { \|q_k\|_2\|I-q_k\|_2}
        {k}.
    \end{equation*}
    Obviously, $0<\tau(q_k)<1$. Replacing $q_k$ by $I-q_k$ if needed, we assume that $0<\tau(q_k)\le 1/2$. Then it follows that
    \begin{equation}\label{may15Equ5.9}
        \lim_{k\to \infty} \frac { \|[z_1, q_k]\|_2+\|[z_2, q_k]\|_2}{\|q_k\|_2}=0.
    \end{equation}
    Notice from (\ref{equ5.6}) that $W^*(z_1+i z_2)'\cap \mathcal M^{\omega}= \mathbb CI$. Therefore, we have that
    \begin{equation}\label{may10Equ5.10}
        \lim_{k\to \omega} \|q_k\|_2 =0.
    \end{equation}
    Let  $\mathcal A (\{q_k\}^{\infty}_{k=1})$ be as defined in Definition \ref{may14Def5.3}. From (\ref{equ5.3}) and (\ref{may15Equ5.9}),
    \begin{equation*}
        \{px_1p, px_2p, p,\mathcal N\}\subseteq \mathcal A
        (\{q_k\}^{\infty}_{k=1}).
    \end{equation*}
    Applying Proposition \ref{may10Prop5.11} to $\mathcal N$ and $p$, we can find  a   projection $e_k$ in $\mathcal M$  for each $k\ge 1$   such that
    \begin{enumerate}[label={\rm (\roman*)}]
        \item \label{thm4.9.i} $e_k $ is a nonzero subprojection of $p$ when $k$ is large enough.
        \item \label{thm4.9.ii} $\displaystyle \limsup_{k\to \infty} \frac {\|e_k\|_2}{\|q_k\|_2} \le 1 $  and
        \item  \label{thm4.9.iii} $\displaystyle \lim_{k\to \infty} \frac {\|[px_ip, e_k]\|_2}{\|e_k\|_2} =0, \qquad \text { for all } i=1,2.$
    \end{enumerate}
    Combining (\ref{may10Equ5.10}) and \ref{thm4.9.ii}, we have \begin{equation}\lim_{k\rightarrow \omega} \|e_k\|_2=0 \label{may15Equ4.12}\end{equation}   It is easy to check that \ref{thm4.9.i}, (\ref{may15Equ4.12}),  and \ref{thm4.9.iii} contradict  (\ref{equ5.1}). \phantom\qedhere
\end{proof}
This ends the proof of the claim and the proof of the theorem.
\end{proof}

 Combining with    Marrakchi's Proposition 2.2   in \cite{Ma},  Theorem \ref{prop5.3}  gives an operator with  spectral gap in a non-$\Gamma$ II$_1$ factor.
The result could be compared with Theorem 2.1 (c) in \cite{Connes}.
\begin{corollary}\label{may17Cor4.12}
    Let $\mathcal M$ be a type ${\rm II}_1$ factor with   trace $\tau$. Then the following statements are equivalent:
    \begin{enumerate}[label={\rm (\roman*)}]
    \item \label{cor4.11.i}  $\mathcal M$ is {non-$\Gamma$}, i.e., $\mathcal M$ fails to have property $\Gamma$.
    \item  \label{cor4.11.ii}  There exist  two self-adjoint elements   $x_1$ and $x_2$ in $\mathcal M$ and an $\alpha_1>0$ such that
    \begin{equation*}
        \|[x_1,y ]\|_2+ \|[x_2,y ]\|_2 \ge   \alpha_1 \|y-\tau(y)\|_2, \quad \text { for every } y\in \mathcal
        M.
    \end{equation*}
    \item  \label{cor4.11.iii} There exist an $x$ in $\mathcal M$  and an $\alpha_2>0$  such that
    \begin{equation*}
        \|[x, y]\|_2+\|[x^*, y]\|_2\ge \alpha_2 \|y-\tau(y)\|_2, \qquad \text{ for every } y \in\mathcal
        M.
    \end{equation*}
    \item  \label{cor4.11.iv}  There exist an $x$ in $\mathcal M$  and an $\alpha_3>0$  such that
    \begin{equation*}
        \|[x, y]\|_2\ge \alpha_3 \|y-\tau(y)\|_2, \qquad \text{ for every self-adjoint } y \in\mathcal
        M.
    \end{equation*}
    \item  \label{cor4.11.v}   There exist two unitary elements $u_1$ and $u_2$ in $\mathcal M$  and an $\alpha_4>0$  such that
    \begin{equation*}
        \|[u_1, y]\|_2 +\|[u_2, y]\|_2  \ge \alpha_4 \|y-\tau(y)\|_2, \qquad \text{ for every } y \in\mathcal
        M.
    \end{equation*}
    \end{enumerate}
\end{corollary}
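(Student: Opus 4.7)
The plan is to close the cycle $\mathrm{(i)} \Rightarrow \mathrm{(ii)} \Rightarrow \mathrm{(iii)} \Rightarrow \mathrm{(iv)} \Rightarrow \mathrm{(i)}$, to handle $\mathrm{(v)}$ separately via a functional-calculus translation between self-adjoints and unitaries, and to rely on Proposition \ref{prop3.7} whenever the closing arrow back to Property $\Gamma$ is needed. The one genuinely new ingredient is the step $\mathrm{(i)} \Rightarrow \mathrm{(ii)}$: I will feed the self-adjoints $x_1, x_2$ and constant $\alpha$ produced by Theorem \ref{prop5.3} (which gives a spectral gap for commutators with projections) into Marrakchi's Proposition 2.2 from \cite{Ma}, whose purpose is exactly to upgrade a projection-level spectral-gap inequality to one valid on arbitrary elements. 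This hands me $\alpha_1 > 0$ and immediately delivers $\mathrm{(ii)}$.

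The remaining arrows inside the cycle are elementary. For $\mathrm{(ii)} \Rightarrow \mathrm{(iii)}$, I will set $x = x_1 + ix_2$; the identities $x_1 = (x+x^*)/2$ and $x_2 = (x-x^*)/(2i)$ give at once $\|[x_1,y]\|_2 + \|[x_2,y]\|_2 \le \|[x,y]\|_2 + \|[x^*,y]\|_2$. For $\mathrm{(iii)} \Rightarrow \mathrm{(iv)}$, I will use the observation that when $y$ is self-adjoint, $[x^*,y]^* = -[x,y]$, so $\|[x^*,y]\|_2 = \|[x,y]\|_2$ and the bound collapses to $\|[x,y]\|_2 \ge (\alpha_2/2)\|y-\tau(y)\|_2$. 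For $\mathrm{(iv)} \Rightarrow \mathrm{(i)}$ I argue the contrapositive: if $\mathcal M$ has Property $\Gamma$, then Proposition \ref{prop3.7} says $W^*(x)'\cap\mathcal M^\omega$ is diffuse, hence contains a projection $P$ with $0 < \tau_\omega(P) < 1$; writing $P = (p_n)_\omega$ with each $p_n$ a projection and setting $y_n = p_n - \tau(p_n)I$ produces a sequence of self-adjoint trace-zero $y_n \in \mathcal M$ along which $\|[x,y_n]\|_2 \to 0$ but $\|y_n\|_2 \to \sqrt{\tau_\omega(P)(1-\tau_\omega(P))} > 0$ along $\omega$, contradicting $\mathrm{(iv)}$.

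For the unitary clause $\mathrm{(v)}$: the direction $\mathrm{(v)} \Rightarrow \mathrm{(i)}$ follows straight from the definition of Property $\Gamma$, because a trace-zero unitary $v$ almost commuting with $u_1, u_2$ at a scale smaller than $\alpha_4/2$ applied to $y = v$ would violate the inequality (since $\|v - \tau(v)\|_2 = 1$). For $\mathrm{(ii)} \Rightarrow \mathrm{(v)}$, I will first rescale so that $\|x_j\| \le 1/2$, then set $u_j = e^{ix_j}$. Since $\|u_j - I\| < 1$, the principal branch of $\log$ expands $x_j = -i\log u_j$ as an absolutely convergent power series in $u_j - I$; combining this with the standard bound $\|[(u_j-I)^n, y]\|_2 \le n\|u_j-I\|^{n-1}\|[u_j,y]\|_2$ (together with $[u_j - I, y] = [u_j, y]$) yields $\|[x_j,y]\|_2 \le C\|[u_j,y]\|_2$ for a universal constant, and inserting into $\mathrm{(ii)}$ gives $\mathrm{(v)}$.

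The main obstacle — or rather the main borrowed ingredient — is the projection-to-general-element lift in $\mathrm{(i)} \Rightarrow \mathrm{(ii)}$. Theorem \ref{prop5.3} alone only controls commutators with projections; without Marrakchi's Proposition 2.2, inserting a general $y$ into the estimate is delicate because spectral slicing of $y - \tau(y)$ has to be carried out uniformly across all $y$. Once that black box is granted, every other step is either a line of algebra or an appeal to Proposition \ref{prop3.7}.
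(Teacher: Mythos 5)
Your proof is correct, and its logical skeleton holds together: $\mathrm{(i)}\Rightarrow\mathrm{(ii)}\Rightarrow\mathrm{(iii)}\Rightarrow\mathrm{(iv)}\Rightarrow\mathrm{(i)}$ together with $\mathrm{(ii)}\Rightarrow\mathrm{(v)}\Rightarrow\mathrm{(i)}$ does establish all five equivalences. The load-bearing arrow $\mathrm{(i)}\Rightarrow\mathrm{(ii)}$ is handled exactly as the paper does it (Theorem~\ref{prop5.3} followed by Marrakchi's Proposition~2.2), and your bridges $\mathrm{(ii)}\Rightarrow\mathrm{(iii)}\Rightarrow\mathrm{(iv)}$ coincide with the paper's.

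Where you diverge is in how you return to Property~$\Gamma$ and how you handle clause~$\mathrm{(v)}$. The paper closes the small cycle with the one-line implication $\mathrm{(ii)}\Rightarrow\mathrm{(i)}$ (the definition of $\Gamma$ applied to $y$ a trace-zero unitary), and then separately proves $\mathrm{(iv)}\Rightarrow\mathrm{(iii)}$ by the elementary decomposition $y=y_1+iy_2$ into self-adjoint parts. You instead prove $\mathrm{(iv)}\Rightarrow\mathrm{(i)}$ directly via the contrapositive, pulling a nontrivial projection out of $W^*(x)'\cap\mathcal M^\omega$ using Proposition~\ref{prop3.7} and tracking $\|p_n-\tau(p_n)\|_2$ along the ultrafilter; this is correct but invokes heavier machinery than the paper needs for that step. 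For $\mathrm{(ii)}\Rightarrow\mathrm{(v)}$, the paper uses the purely algebraic construction $u_j=x_j/\lambda+i\sqrt{I-x_j^2/\lambda^2}$ with $u_j+u_j^*=2x_j/\lambda$, avoiding any convergence bookkeeping, whereas your route through $u_j=e^{ix_j}$ and the logarithm series works but requires the $\|u_j-I\|<1$ normalization and the geometric-series bound $\|[x_j,y]\|_2\le(1-\|u_j-I\|)^{-1}\|[u_j,y]\|_2$. Conversely, your $\mathrm{(v)}\Rightarrow\mathrm{(i)}$ (apply the inequality to $y=v$ a trace-zero unitary almost commuting with $u_1,u_2$) is genuinely simpler than the paper's $\mathrm{(v)}\Rightarrow\mathrm{(iv)}$, which passes through $u_j=e^{ix_j}$ and a $C^*$-algebraic contradiction argument; the price you pay for that simplification is shifting the burden to the harder $\mathrm{(iv)}\Rightarrow\mathrm{(i)}$ arrow. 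Net effect: same theorem, same core input, a slightly rearranged graph of implications with a different trade-off between elementary and ultrapower arguments.
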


\begin{proof}
    \ref{cor4.11.i} $ \Rightarrow $  \ref{cor4.11.ii}. Assume that $\mathcal M$ is {non-$\Gamma$}.  It follows from Theorem \ref{prop5.3} that there exist two self-adjoint elements $x_1$ and $x_2$ in $\mathcal M$ and a positive number $ \alpha$ such that
    \begin{equation*}
        \|[x_1,e ]\|_2+ \|[x_2,e ]\|_2 \ge  \alpha\|e\|_2\|I-e\|_2, \quad \text { for every  projection } e\in  \mathcal
        M.
    \end{equation*}
    By Proposition 2.2 in \cite{Ma}, there exists a positive number $\alpha_1$ such that
    \begin{equation*}
        \|[x_1,y ]\|_2+ \|[x_2,y ]\|_2 \ge   \alpha_1 \|y-\tau(y)\|_2, \quad \text { for every } y\in \mathcal
        M.
    \end{equation*}

    \ref{cor4.11.ii} $ \Rightarrow$  \ref{cor4.11.i}. It follows directly from the definition of property $\Gamma$.

    \ref{cor4.11.ii} $ \Leftrightarrow  $ \ref{cor4.11.iii}. Let $x=x_1+ix_2$ where $x_1, x_2$ are self-adjoint elements in $\mathcal M$. Then
    \begin{equation*}
        2(\|[x_1,y ]\|_2+  \|[x_2,y ]\|_2)\ge \|[x ,y ]\|_2+ \|[x^*,y ]\|_2 \ge \|[x_1,y ]\|_2+ \|[x_2,y ]\|_2   ,  \text { for every } y\in \mathcal
        M.
    \end{equation*}
    This means that the biconditional ``\ref{cor4.11.ii} $\Leftrightarrow$ \ref{cor4.11.iii}'' is obvious.

    \ref{cor4.11.iii} $ \Rightarrow $  \ref{cor4.11.iv}. It is trivial.

    \ref{cor4.11.iv} $ \Rightarrow  $ \ref{cor4.11.iii}. Assume that \ref{cor4.11.iv} is true. Let $y=y_1+iy_2$ be in $\mathcal M$ where $y_1, y_1$ are self-adjoint. Without loss of generality, we assume $\tau(y)=0$, thus $\tau(y_1)=\tau(y_2)=0$. Then
    \begin{equation*}
        \begin{aligned}
        \|y\|_2 &\le \|y_1\|_2+ \|y_2\|_2 \le \frac 1 {\alpha_3} \left ( \|[x, y_1]\|_2+  \|[x, y_2]\|_2  \right ) \\ &\le \frac 1 {\alpha_3} \left ( \|[x, y]\|_2+  \|[x, y^*]\|_2  \right )=\frac 1 \alpha_3 \left ( \|[x, y]\|_2+  \|[x^*, y]\|_2  \right ).
        \end{aligned}
    \end{equation*}
    i.e., \ref{cor4.11.iii} is true.

    \ref{cor4.11.ii} $ \Rightarrow  $ \ref{cor4.11.v}. Assume that \ref{cor4.11.ii} is true. Let $\lambda>\max\{\|x_1\|, \|x_2\|\}$ and
    \begin{equation*}
        u_1= \frac {x_1}{\lambda } + i \sqrt {I- \frac {x_1^2} {\lambda^2}} \qquad \text { and } \qquad u_2= \frac {x_2}{\lambda } + i \sqrt {I- \frac {x_2^2}
        {\lambda^2}}.
    \end{equation*}
    Then $u_1$, $u_2$ are unitary elements in $\mathcal M$ such that $u_i^*+u_i=2x_i/\lambda$ for $i=1,2$. Hence
    \begin{equation*}
        \begin{aligned}
            \|[u_1, y]\|_2 +\|[u_2, y]\|_2  & = \frac {\|[u_1, y]\|_2 + \|[u_1^*, y]\|_2}{2} + \frac {\|[u_2, y]\|_2 + \|[u_2^*, y]\|_2}{2}\\
            & \ge \frac 1 \lambda \left ( \|[x_1, y]\|_2 + \|[x_2, y]\|_2\right ) \\
            & \ge \frac \alpha \lambda \|y-\tau(y)\|_2, \qquad \forall \ y\in \mathcal M.
        \end{aligned}
    \end{equation*} Thus, \ref{cor4.11.v} is true.

    \ref{cor4.11.v} $ \Rightarrow  $ \ref{cor4.11.iv}. Assume that \ref{cor4.11.v} is true. From Theorem 5.2.5 of \cite{Kadison1}, we have $u_1=e^{ix_1}$ and $u_2=e^{ix_2}$ for some positive elements $x_1, x_2$ in $\mathcal M$. We show that there exists an $\alpha'>0$ such that
    \begin{equation*}
        \|[x_1+ix_2, y]\|_2\ge \alpha' \|y-\tau(y)\|_2, \qquad \text{ for every self-adjoint } y \in\mathcal
        M.
    \end{equation*}
    Assume, to the contrary, that for any $\alpha'=1/k$ there exists a self-adjoint element $y_k$ such that (1) $\tau(y_k)=0$; (2) $\|y_k\|_2=1$; and (3) $\|[x_1+ix_2, y_k]\|_2< 1/k$. Define
    \begin{equation*}
        \mathcal B(\{y_k\}^{\infty}_{k=1}) = \{x\in\mathcal M : \lim_{k\to \infty} \|[x,
        y_k]\|_2=0\}.
    \end{equation*}
    Similar to Lemma \ref{may18Lemma4.2}, we obtain that $\mathcal B(\{y_k\}^{\infty}_{k=1})$ is a unital $C^*$-algebra  containing $x_1+ix_2$. Thus $u_1,u_2\in \mathcal B(\{y_k\}^{\infty}_{k=1})$, which contradicts Assumption \ref{cor4.11.v}.
\end{proof}


\section{Reducible operators in non-\texorpdfstring{$\Gamma$} \ \ type \texorpdfstring{${\rm II}_1$} \ \ factors}

In this section, let $\mathcal M$ be a type ${\rm II}_1$ factor with trace $\tau$. 

\begin{definition}\label{red-opts}
    An element $x\in\mathcal M$ is reducible if there is a nontrivial projection $p\in\mathcal M$ such that $xp=px$, equivalently, $W^*(x)'\cap \mathcal M\ne \mathbb CI$. If an element $x\in\mathcal M$ is not reducible, then $x$ is irreducible, equivalently, $W^*(x)'\cap \mathcal M=\mathbb CI$. The set of all the reducible operators in $\mathcal M$ is denoted by $\operatorname{Red}(\mathcal M)$ and the operator-norm closure of $\operatorname{Red}(\mathcal M)$ is denoted by $\overline{\operatorname{Red}(\mathcal M)}^{\|\cdot\|}$.
\end{definition}

\begin{proposition}\label{June28Prop5.2}
    Suppose that $\mathcal M$ is a type ${\rm II}_1$ factor with separable predual. Then $\operatorname{Red}(\mathcal M)$ is not operator norm closed in $\mathcal M$. In particular, $\operatorname{Red}(\mathcal M)\ne \mathcal M$.
\end{proposition}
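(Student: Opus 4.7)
For the ``in particular'' assertion $\operatorname{Red}(\mathcal M)\neq\mathcal M$, note that since $\mathcal M$ has separable predual, Theorem 6.2 of \cite{PopaGe} (already invoked in the proof of Corollary~\ref{Sherman}) provides a single generator $y\in\mathcal M$ with $W^{\ast}(y)=\mathcal M$, and hence $W^{\ast}(y)'\cap\mathcal M=\mathcal M'\cap\mathcal M=\mathbb CI$, so that $y$ is irreducible and is not in $\operatorname{Red}(\mathcal M)$.

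For the main assertion, the plan is to exhibit an irreducible operator lying in $\overline{\operatorname{Red}(\mathcal M)}^{\|\cdot\|}$. The strategy is to construct an increasing chain of proper unital $\ast$-subalgebras
$$\mathcal M_1\subseteq\mathcal M_2\subseteq\cdots\subseteq\mathcal M$$
with two properties: (i) $\mathcal M_k'\cap\mathcal M\neq\mathbb CI$ for every $k$, so that every element of $\mathcal M_k$ commutes with some nontrivial projection drawn from $\mathcal M_k'\cap\mathcal M$ and hence $\bigcup_k\mathcal M_k\subseteq\operatorname{Red}(\mathcal M)$; and (ii) $\bigcup_k\mathcal M_k$ is weakly dense in $\mathcal M$. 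Given such a chain, one can select a countable weakly dense sequence $\{a_n\}_{n\ge 1}\subseteq\mathcal M$ with $a_n\in\mathcal M_n$ for every $n$ (using that the chain is increasing and has weakly dense union). Then the construction in the proof of Theorem 6.2 of \cite{PopaGe} furnishes a summable sequence of scalars $\{\lambda_n\}$ such that the series $y:=\sum_{n\ge 1}\lambda_n a_n$ converges in operator norm and satisfies $W^{\ast}(y)=W^{\ast}(\{a_n\}_{n\ge 1})=\mathcal M$, making $y$ irreducible. The partial sums $y_N:=\sum_{n\le N}\lambda_n a_n$ all lie in $\mathcal M_N\subseteq\operatorname{Red}(\mathcal M)$ and $\|y_N-y\|\to 0$, so $y\in\overline{\operatorname{Red}(\mathcal M)}^{\|\cdot\|}\setminus\operatorname{Red}(\mathcal M)$.

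The main difficulty is the construction of the chain $\{\mathcal M_k\}$: the two requirements pull in opposite directions, since the decreasing family $\{\mathcal M_k'\cap\mathcal M\}$ is necessarily forced to have intersection $\mathbb CI$ as soon as $\bigcup_k\mathcal M_k$ is weakly dense in $\mathcal M$. A concrete realization uses a tower of tensor decompositions $\mathcal M\cong(p_n\mathcal M p_n)\,\bar\otimes\,M_n(\mathbb C)$, available for every $n$ by choosing projections $p_n\in\mathcal M$ with $\tau(p_n)=1/n$: at stage $n$, one takes matrix units $\{e_{ij}^{(n)}\}_{i,j=1}^n$ of a type I$_n$ subfactor of $\mathcal M$ together with a carefully chosen finite list of elements from a fixed weakly dense sequence of $\mathcal M$ lying inside $\{e_{ij}^{(n)}\}'\cap\mathcal M$, and lets $\mathcal M_n$ be the $\ast$-algebra they generate. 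By arranging that the finite list omits at least one nontrivial element of $\{e_{ij}^{(n)}\}'\cap\mathcal M$ at every stage, one ensures $\mathcal M_n'\cap\mathcal M$ retains a nontrivial remnant. Verifying inductively that such a chain can be built with weakly dense union while preserving nontrivial relative commutants is the technical heart of the argument.
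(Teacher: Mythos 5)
Your argument has a fundamental gap: it relies on Theorem 6.2 of \cite{PopaGe} to produce a single generator of $\mathcal M$ for an arbitrary type ${\rm II}_1$ factor with separable predual. That theorem applies to factors with Property~$\Gamma$ (this is exactly how it is used in the proof of Corollary~\ref{Sherman}, where Property~$\Gamma$ has already been established), and indeed the paper explicitly notes at the start of Section~3 that it is an open problem whether every separable ${\rm II}_1$ factor is singly generated. So the ``in particular'' part of your argument is unjustified for general $\mathcal M$, and the main construction inherits the same problem: the claim that the partial sums $y_N=\sum_{n\le N}\lambda_n a_n$ can be arranged to converge in norm to a single generator $y$ of all of $\mathcal M$, where $\{a_n\}$ is merely a weakly dense sequence, would in particular resolve the generator problem. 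Beyond that, the tower $\{\mathcal M_k\}$ you envision (weakly dense union in $\mathcal M$, each $\mathcal M_k'\cap\mathcal M\ne\mathbb CI$) is not constructed; you correctly flag this as the technical heart but then leave it as a sketch, and in a non-hyperfinite factor such a chain cannot consist of finite-dimensional algebras, so a new idea is genuinely needed there.

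The paper sidesteps all of these obstructions by confining the construction to an \emph{irreducible hyperfinite} subfactor $\R\subseteq\mathcal M$, whose existence follows from Popa's Corollary 4.1 in \cite{Pop}. Inside $\R$ one has a CAR subalgebra $\mathcal A=\overline{\bigcup_n\mathcal A_n}^{\,\|\cdot\|}$ with $\mathcal A_n\cong M_{2^n}(\mathbb C)$, and Topping's theorem gives a single generator $a\in\mathcal A$ of $\R$ that is a norm limit of elements $a_n\in\mathcal A_n$. Each $a_n$ is automatically reducible in $\mathcal M$ because $\mathcal A_n$ is finite-dimensional (its relative commutant in $\mathcal M$ is certainly nontrivial), while $a$ is irreducible in $\mathcal M$ since $W^*(a)=\R$ and $\R'\cap\mathcal M=\mathbb CI$. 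This gives $a\in\operatorname{Red}(\mathcal M)^{\|\cdot\|}\setminus\operatorname{Red}(\mathcal M)$ without touching the single generation problem for $\mathcal M$ itself. If you want to salvage your approach, replacing the weakly dense tower in $\mathcal M$ by the norm-dense CAR tower inside an irreducible hyperfinite subfactor is exactly the fix.
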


\begin{proof}
    As a special case of Corollary 4.1 of \cite{Pop}, there exists an irreducible, hyperfinite subfactor $\R$ of $\mathcal{M}$, i.e., $\R^{\prime}\cap\M=\mathbb{C} I$.

    Notice that there exists a unital CAR subalgebra $\A$ of $\R$ such that $\R$ is the ${\rm weak}^{*}$-closure of $\A$. The reader is referred to Example III.2.4 of \cite{Davidson} for the definition of CAR algebras. Thus there exists an increasing sequence $\{\A_{n}\}^{\infty}_{n=1}$ of full matrix algebras such that
    \begin{enumerate}
        \item $\A_{n}$ is $\ast$-isomorphic to $M_{2^n}(\CCC)$  for each $n\geq 1$;
        \item $\cup^{\infty}_{n=1}\A_{n}$ is dense in $\A$ in the operator norm.
    \end{enumerate}
    In terms of the main theorem of \cite{Topping}, there exists a single generator $a\in \A$. It follows that $W^*(a)=\R$. This entails that $a$ is irreducible in $\mathcal{M}$, or $a\notin \operatorname{Red}(\mathcal M)$. The fact $\A$ is a CAR algebra implies that there exists a sequence $\{a_n\}^{\infty}_{n=1}$ of operators in $\A$ with $a_n\in \A_n$ for each $n\geq 1$ such that
    \begin{equation*}
        \lim_{n\to \infty}\Vert a_n-a \Vert=0.
    \end{equation*}
    That $a_n\in \A_n$ entails that $a_n$ is reducible in $\mathcal{M}$ for every $n\geq 1$. Hence $a\in \overline{\operatorname{Red}(\mathcal M)}^{\|\cdot \|}$.  Thus $\operatorname{Red}(\mathcal M)$ is not closed in $\mathcal M$ in the operator norm and $\operatorname{Red}(\mathcal M)\ne \mathcal M$.
\end{proof}

Suppose $\mathcal N$ is a separable type ${\rm II}_1$ factor with property $\Gamma$. It is straightforward to see that $\operatorname{Red}(\mathcal N^\omega)=\mathcal N^\omega$. In fact, if $(x_n)_\omega\in \mathcal N^\omega$, then there exists a sequence  $\{q_n\}_{n=1}^\infty$ of projections in $\mathcal N$ with $\tau(q_n)=1/2$ and $\|[x_n, q_n]\|_2\le 1/n$ for each $n\ge 1$. Then $(q_n)_\omega$ is a nontrivial projection in $\mathcal N^\omega$ that commutes with $(x_n)_\omega$. Hence,  $\operatorname{Red}(\mathcal N^\omega)=\mathcal N^\omega$.
Here we present another type of examples of nonseparable type ${\rm II}_1$ factors $\mathcal M$ such that $\operatorname{Red}(\mathcal M)=\mathcal M$.

\begin{example}\label{June10Example5.2}
    Let $\Lambda$ be an uncountable index set and $(\mathcal{M}_{\lambda},\tau_{\lambda})$ a type ${\rm II}_1$ factor with trace $\tau_{\lambda}$ acting on $L^2(\mathcal{M}_{\lambda},\tau_{\lambda})$ for each $\lambda\in \Lambda$. Write $\mathcal{M} = \overline{\bigotimes}_{\lambda \in \Lambda}(\mathcal{M}_{\lambda},\tau_{\lambda})$ to be the tensor product von Neumann algebra. 
    It is routine to verify that $\mathcal{M}$ is a type ${\rm II}_1$ factor with trace $\tau$, where $\tau$ is induced by $\{\tau_{\lambda}\}_{\lambda\in\Lambda}$. Naturally, we can view $\mathcal{M}$ as a subset of $L^2(\mathcal{M},\tau)$.  Correspondingly, the underlying Hilbert space $L^2(\mathcal{M},\tau)$ can be viewed as $\bigotimes_{\lambda \in \Lambda} L^2(\mathcal{M}_{\lambda},\tau_{\lambda})$ (see Definition 1.6 from Chapter XIV of \cite{Takesaki3}, where the method works for an uncountable tensor product by taking $\hat{I}_{\lambda}$ in each $L^2(\mathcal{M}_{\lambda},\tau_{\lambda})$ as reference vectors).

    Suppose that $a$ is an element in $\mathcal{M}$. From $a\in L^2(\mathcal{M},\tau)$, there is a countable subset $\Lambda_1$ of $\Lambda$ such that $a \in \bigotimes_{\lambda \in \Lambda_1} L^2(\mathcal{M}_{\lambda}, \tau_{\lambda})$.  It follows that $a \in \overline{\bigotimes}_{\lambda \in \Lambda_1} (\mathcal{M}_{\lambda}, \tau_{\lambda})$. Thus, there is an index $\alpha \in \Lambda \setminus \Lambda_1$ such that $(\mathcal{M}_{\alpha},\tau_{\alpha})$ (as a nontrivial subfactor of $\mathcal{M}$) is contained in the relative commutant of $W^*(a)$ in $(\mathcal{M},\tau)$. Therefore, $a \in \operatorname{Red}(\mathcal{M})$, which means $\operatorname{Red}(\mathcal{M}) = \mathcal{M}$. 

    In contrast to each $\mathcal N$ possessing property $\Gamma$ in the paragraph preceding this example, we can set every $(\mathcal{M}_{\lambda},\tau_{\lambda})$ to be the free group factor $L(\mathbb{F}_2)$ for all $\lambda \in \Lambda$. 
    It is also worth noting that the type ${\rm II}_1$ factor $\mathcal{M}$ constructed here has property $\Gamma$.
\end{example}


In \Cref{June28Prop5.2}, we obtain that the set of reducible operators in each separable type ${\rm II}_1$ factor is not operator norm closed. \Cref{June10Example5.2} provides us  a family of non-separable type ${\rm II}_1$ factors $\mathcal M$ with property $\Gamma$ satisfying $\operatorname{Red}(\mathcal M) = \mathcal M$. In the next result, we will show that, in a  (separable or nonseparable) non-$\Gamma$ type ${\rm II}_1$ factor, the set of reducible operators fails to be closed in the operator norm topology.

\begin{proposition}\label{June28Prop5.11}  Let $\mathcal N$ be a separable type ${\rm II}_1$ factor and $\mathcal M$ a non-$\Gamma$ type ${\rm II}_1$ factor.
    \begin{enumerate}[label={\rm (\roman*)}] 
        \item \label{prop5.5.i}  If $x$ is an element in $\mathcal N$ satisfying $W^*(x)=\mathcal N$, then there exists an element $y$ in $\mathcal N$ such that
        \begin{equation*}
            W^*(y)=\mathcal N \quad \text{ and } \quad y\in\overline{\operatorname{Red}(\mathcal N)}^{\|\cdot\|}.
        \end{equation*}
        \item \label{prop5.5.ii}  $\operatorname{Red}(\mathcal M)$ is not operator norm closed in $\mathcal M$.
    \end{enumerate}
\end{proposition}

\begin{proof}
    \ref{prop5.5.i}. Assume that $x=x_1+ix_2$ for some self-adjoint elements $x_1, x_2$ in $\mathcal N$. Suppose that $x_1$ is contained in  a masa $\mathcal A$ in
    $\mathcal N$. Assume that $y_1$ is  a self-adjoint
    generator of $\mathcal A$. Let $\{p_n\}_{n=1}^\infty$ be an
    increasing sequence of nontrivial projections in $\mathcal A$ such that
    $\lim_{n\rightarrow\infty} \|I-p_n\|_2=0$. Define
    $$ z_m=\sum_{n=1}^m \frac {p_nx_2p_n}{2^n}\quad  \ \text {   for each $m\ge 1$  \ \  and }
    \qquad y_2= \sum_{n=1}^\infty \frac {p_nx_2p_n}{2^n}.
    $$ Thus $y_2$ is a self-adjoint element in
    $\mathcal N$ such that $  \lim_{m\rightarrow \infty} \|y_2-z_m\|=0$.
    From the fact that $z_mp_m =z_m=  p_m z_m$, it follows that
    $y_1+iz_m\in\operatorname{Red}(\mathcal N)$, whence
    $y_1+iy_2\in\overline{\operatorname{Red}(\mathcal N)}^{\|\cdot\|}$.
    
    Define $y=y_1+iy_2$. We next show that $W^*(y)=\mathcal N$. In fact, by the choice
    of $y_1$, we have that $x_1$ and  $\{p_n\}_{n=1}^\infty$ are in $\mathcal A\subseteq W^*(y)$.  By the
    construction of $y_2$, we have $$p_1y_2p_1=\sum_{n=1}^\infty \frac {p_1x_2p_1}{2^n} = p_1x_2p_1\in
    W^*(y_1,y_2),$$ and $$ \begin{aligned}
    p_{m+1}y_2p_{m+1}& =\left ( \sum_{n=1}^m \frac {p_nx_2p_n}{2^n} \right )+\left ( \sum_{n=m+1}^\infty \frac {p_{m+1}x_2p_{m+1}}{2^n} \right ) \\& = \left ( \sum_{n=1}^m \frac {p_nx_2p_n}{2^n} \right )+\frac {p_{m+1}x_2p_{m+1}}{2^m} \in
    W^*(y_1,y_2), \quad \text { for $m\ge 1$}.\end{aligned}
    $$ Therefore
    we obtain that $p_mx_2p_m\in W^*(y ) $ for $m\ge 1$. This
    implies that $x_2\in W^*(y ),$ as $\lim_{m\rightarrow\infty} \|I-p_m\|_2=0$. It follows that $W^*(y )=\mathcal
    N$.
    
    \ref{prop5.5.ii}. By Proposition \ref{prop3.9-gamma}, there exists an element  $x $ in $\mathcal M$ such that $W^*(x )'\cap \mathcal M^\omega=\mathbb CI$, so  $W^*(x )'\cap \mathcal M=\mathbb CI$. Define $\mathcal N=W^*(x )$. Then $\mathcal N$ is an irreducible subfactor of $\mathcal M$. By part (i), there  exists
    an operator $y $ in $\mathcal N$ such that
    $W^*(y )=\mathcal N$ and
    $y \in\overline{\operatorname{Red}(\mathcal N)}^{\|\cdot\|}$. It follows that $y $ is an irreducible operator in $\mathcal M$ with $y \in  \overline{\operatorname{Red}(\mathcal M)}^{\|\cdot\|}$. This finishes the proof of \ref{prop5.5.ii}.
\end{proof}

Recall that
\begin{equation*}
    \ell^{\infty} ( \mathcal M) =\{(a_n)_n \ : \ \forall \ n\in
\mathbb N, \  a_n\in\mathcal M \ \text { and } \ \sup_{n\in\mathbb
N} \|a_n\|<\infty\}.
\end{equation*}
and
\begin{equation*}
    c_0(\mathcal M) =\{(a_n)_n\in \ell^{\infty} ( \mathcal M) :
\lim_{n\rightarrow \infty} \|a_n\|=0\}.
\end{equation*}
Then $c_0(\mathcal M)$ is a norm closed two sided ideal of $\ell^{\infty} ( \mathcal
M)$ and
\begin{equation*}
    \ell^{\infty} ( \mathcal M)/c_0(\mathcal M)
\end{equation*}
is also a unital $C^*$-algebra. An element in $\ell^{\infty} ( \mathcal M)/c_0(\mathcal M)$  is denoted by $[(a_n)_n]$, if no confusion arises. Moreover, there is a natural embedding from $\mathcal M$ into $\ell^{\infty} ( \mathcal M)/c_0(\mathcal M)$ by sending $a$ in $\mathcal M$ to $[(a)_n]$ in $\ell^{\infty} ( \mathcal M)/c_0(\mathcal M)$. So we view $\mathcal M \subseteq  \ell^{\infty} ( \mathcal M)/c_0(\mathcal M)$.

To proceed, we need to prepare a well-known technical result for $ C^* $-algebras, which is inspired by Exercise 2.7 from \cite{Ror}. 
For completeness, we include a proof. 

\begin{lemma}\label{lemma4.4}
    Let $x$ be a self-adjoint element in $\mathcal M$ such that $\|x-x^2\|<1/4$. Then there is a projection $p\in C^*(x)$ such that $\|x-p\|\le \sqrt {\|x-x^2\|}$.
\end{lemma}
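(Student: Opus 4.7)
The plan is to apply continuous functional calculus to $x$. Since $x$ is self-adjoint, $\sigma(x)$ is a compact subset of $\RRR$, and the hypothesis $\|x-x^2\|<1/8$ translates via the spectral mapping theorem to the scalar bound $|t-t^2|<1/8$ for every $t\in\sigma(x)$. The function $g(t)=t-t^2=t(1-t)$ attains its maximum $1/4$ at $t=1/2$; solving the two polynomial inequalities $-1/8<t-t^2<1/8$ shows that $\sigma(x)$ must avoid an open interval around $1/2$. Concretely, $\sigma(x)$ is contained in $(-\infty,a]\cup[b,+\infty)$ where $a=(1-1/\sqrt 2)/2<1/2<b=(1+1/\sqrt 2)/2$, splitting $\sigma(x)$ into a piece near $0$ and a piece near $1$ that are separated by the gap $(a,b)$.

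I would next define $f\colon\sigma(x)\to\RRR$ by $f=0$ on the left piece and $f=1$ on the right piece. The separation of the two pieces by $(a,b)$ makes $f$ continuous on $\sigma(x)$, so $p:=f(x)$ belongs to ${\rm C}^*(x)$; the pointwise relations $f=\bar f=f^2$ on $\sigma(x)$ force $p=p^*=p^2$, so $p$ is a projection in ${\rm C}^*(x)$. This produces the required projection directly, without having to take an $L^2$-approximation as in Lemma \ref{may14Lemma5.6}; the stronger hypothesis on the operator norm is exactly what makes the characteristic function continuous.

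The final step is to estimate $\|x-p\|=\sup_{t\in\sigma(x)}|t-f(t)|$ pointwise on $\sigma(x)$. On the left piece $f(t)=0$, so $|t-f(t)|=|t|$; since $|1-t|\ge |t|$ there (immediate for $t\le 0$, and because $t\le a<1/2$ when $0\le t\le a$), one gets $|t-t^2|=|t||1-t|\ge t^2$, which rearranges to $|t|\le\sqrt{|t-t^2|}$. The right piece is handled symmetrically, with $1-t$ playing the role of $t$ and yielding $|t-1|\le\sqrt{|t-t^2|}$. Taking the supremum over $\sigma(x)$ delivers $\|x-p\|\le\sqrt{\|x-x^2\|}$. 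The whole argument is elementary spectral theory; the only nonformal input is verifying that the constant $1/8$ is exactly what is needed to create a spectral gap around $1/2$, which is the key role of the hypothesis.
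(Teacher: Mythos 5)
Your proof is correct and is essentially the standard functional-calculus argument; the paper explicitly skips the proof of this lemma, calling it well known, so your write-up supplies the missing details rather than deviating from the paper's approach. Two small remarks on rigor: (1) since the hypothesis is a strict inequality $\|x-x^2\|<1/8$, the spectrum actually avoids the closed interval $[a,b]$ and lies in the open complement $(-\infty,a)\cup(b,\infty)$, which is what makes the two spectral pieces compact, disjoint, and therefore separated by a positive gap; your use of closed rays is harmless but the strict version is what cleanly gives the disconnection; (2) the inequality $|t|\le\sqrt{|t-t^2|}$ on the left piece rests on $|1-t|\ge|t|$, which is exactly the condition $t\le 1/2$ (together with the trivial case $t\le 0$), and similarly $|1-t|\le\sqrt{|t-t^2|}$ on the right piece uses $t\ge 1/2$; since $a<1/2<b$, both hold on the relevant pieces, and taking the supremum over $\sigma(x)$ gives $\|x-p\|\le\sqrt{\|x-x^2\|}$ as claimed.

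Your observation at the end is also apt: the operator-norm hypothesis is precisely what creates a genuine spectral gap around $1/2$, which is the structural difference between this lemma and Lemma~\ref{may14Lemma5.6} (where only an $\|\cdot\|_2$-bound is available, so one must settle for the spectral projection of the interval $(1/2,1]$ and an $L^2$-estimate rather than a norm estimate).
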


\begin{proof}
    Assume that $\Vert x^2- x \Vert = \varepsilon < 1/4$. As an application of the spectral theorem for normal operators, it follows that
    \begin{equation}
      \sigma(x)\subseteq[-\sqrt{\varepsilon},\sqrt{\varepsilon}]\cup [1-\sqrt{\varepsilon},1+\sqrt{\varepsilon}]. \nonumber
    \end{equation}
    Define a continuous function $f$ on $\sigma(x)$ in the following form:
    \begin{equation}\label{f(t)}
      f(t):=\begin{cases}
          0, & t\in\sigma(x)\cap [-\sqrt{\varepsilon},\sqrt{\varepsilon}]\\
          1, & t\in\sigma(x)\cap [1-\sqrt{\varepsilon},1+\sqrt{\varepsilon}].
      \end{cases}  \nonumber
    \end{equation}
    By the spectral mapping theorem for normal operators, we have $\Vert f(x)- x \Vert \leq \sqrt{\varepsilon}$. Note that $f(x)$ is a projection in $ C^* (x)$. This ends the proof.
\end{proof}

\begin{proposition}\label{may15Prop5.5}
    Let $x$ be an element in a type ${\rm II}_1$ factor $\mathcal M$. The following statements are equivalent:
    \begin{enumerate}[label={\rm (\roman*)}]
    \item \label{prop5.7.i} $x\in \overline{\operatorname{Red}(\mathcal M)}^{\|\cdot\|}$.
    \item \label{prop5.7.ii} There exists a sequence $\{p_n\}_{n=1}^\infty$ of projections in $\mathcal M$ such that
    \begin{equation*}
         0<\tau(p_n)\le 1/2, \ \forall \ n\ge 1, \mbox{ and } \lim_{n\to \infty} \|[x,
         p_n]\|=0.
    \end{equation*}
    \item \label{prop5.7.iii} There exists a sequence $\{p_n\}_{n=1}^\infty$ of projections in $\mathcal M$ such that
    \begin{equation*}
        \text{ $0<\tau(p_n)\le 1/2$, $\forall \ n\ge 1$, and $\lim_{n\to \infty} \|[y,p_n]\|=0$, $\forall \ y\in {C}^*(x)$.}
    \end{equation*}
    \item \label{prop5.7.iv} $C^*(x)'\cap \left (\ell^{\infty} ( \mathcal M)/c_0(\mathcal M)\right )$ has a projection not contained in the center of  $ \ell^{\infty} ( \mathcal M)/c_0(\mathcal M) $.
    \end{enumerate}
\end{proposition}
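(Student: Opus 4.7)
The plan is to prove the chain $(i) \Rightarrow (ii) \Rightarrow (iii) \Rightarrow (iv) \Rightarrow (i)$, with the last implication carrying the real content.

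The first three implications are short. For $(i) \Rightarrow (ii)$, pick $x_n \in \operatorname{Red}(\mathcal M)$ with $\|x - x_n\| \to 0$, take a nontrivial projection $q_n \in \mathcal M$ commuting with $x_n$, and replace $q_n$ by $I - q_n$ if necessary to ensure $0 < \tau(q_n) \le 1/2$; then $\|[x, q_n]\| = \|[x - x_n, q_n]\| \le 2 \|x - x_n\| \to 0$. For $(ii) \Rightarrow (iii)$, the set $\{y \in \mathcal M : \|[y, p_n]\| \to 0\}$ is a norm-closed unital $\ast$-subalgebra (closed under multiplication via the Leibniz-type bound $\|[yz, p_n]\| \le \|y\|\|[z, p_n]\| + \|[y, p_n]\|\|z\|$ and under adjoints since $[y^*, p_n] = -[y, p_n]^*$), so it contains $C^*(x)$ once it contains $x$. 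For $(iii) \Rightarrow (iv)$, the class $P := [(p_n)_n]$ is a projection in $l^\infty(\mathcal M)/c_0(\mathcal M)$, and is nontrivial because $\|p_n\| = \|I - p_n\| = 1$ for every $n$ (as $0 < \tau(p_n) \le 1/2$); the vanishing in $(iii)$ says exactly that $P$ commutes with every element of $C^*(x)$ in the quotient.

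The heart of the proposition is $(iv) \Rightarrow (i)$. Suppose $[(a_n)_n]$ is a nontrivial projection in $C^*(x)' \cap (l^\infty(\mathcal M)/c_0(\mathcal M))$. Replacing $a_n$ by $(a_n + a_n^*)/2$ (which represents the same class in the quotient), I may assume the $a_n$ are self-adjoint and $\|a_n - a_n^2\| \to 0$. For $n$ sufficiently large, Lemma \ref{lemma4.4} produces a projection $p_n \in {\rm C}^*(a_n) \subseteq \mathcal M$ with $\|p_n - a_n\| \le \sqrt{\|a_n - a_n^2\|} \to 0$. Nontriviality of $[(a_n)_n]$ in the quotient forces $p_n \ne 0, I$ eventually, and the commutation hypothesis yields $\|[x, a_n]\| \to 0$, hence $\|[x, p_n]\| \to 0$. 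Now set
\[
x_n := p_n x p_n + (I - p_n) x (I - p_n),
\]
which commutes with the nontrivial projection $p_n$ and therefore lies in $\operatorname{Red}(\mathcal M)$. Writing $x - x_n = p_n x (I - p_n) + (I - p_n) x p_n$ and using that the two summands have mutually orthogonal left and right support projections, one computes $(x - x_n)^*(x - x_n) = (I - p_n) x^* p_n x (I - p_n) + p_n x^* (I - p_n) x p_n$, whose norm equals $\max(\|p_n x (I - p_n)\|^2, \|(I - p_n) x p_n\|^2)$; the same orthogonality applied to $[x, p_n] = (I - p_n) x p_n - p_n x (I - p_n)$ gives $\|[x, p_n]\|^2$ equal to the same maximum. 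Hence $\|x - x_n\| = \|[x, p_n]\| \to 0$, and $x \in \operatorname{Red}(\mathcal M)^{\|\cdot\|}$.

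The main obstacle is this lifting step at the start of $(iv) \Rightarrow (i)$: translating an abstract commuting projection in the ${\rm C}^*$-quotient $l^\infty(\mathcal M)/c_0(\mathcal M)$ back into honest projections in $\mathcal M$ that still asymptotically commute with $x$. This is handled cleanly by applying the functional-calculus estimate of Lemma \ref{lemma4.4} to self-adjoint representatives, after which the cut-and-paste operator $x_n$ is nearby in operator norm via the orthogonal-corners identity $\|x - x_n\| = \|[x, p_n]\|$.
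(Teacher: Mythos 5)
Your cyclic chain (i) $\Rightarrow$ (ii) $\Rightarrow$ (iii) $\Rightarrow$ (iv) $\Rightarrow$ (i) is essentially the paper's argument reorganized; the paper proves (i) $\Leftrightarrow$ (ii), (ii) $\Leftrightarrow$ (iii), (iii) $\Rightarrow$ (iv), and (iv) $\Rightarrow$ (i). Your (i) $\Rightarrow$ (ii), (ii) $\Rightarrow$ (iii), (iii) $\Rightarrow$ (iv) all match the paper and are correct, and the orthogonal-corners computation showing $\|x - x_n\| = \|[x,p_n]\|$ (which the paper asserts without proof in its (ii) $\Rightarrow$ (i) step) is a nice explicit justification.

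The gap is in (iv) $\Rightarrow$ (i), precisely where the paper says only ``follows from Lemma~\ref{lemma4.4}.'' You assert that nontriviality of $[(a_n)_n]$ in $l^\infty(\mathcal M)/c_0(\mathcal M)$ forces the lifted projections $p_n\in{\rm C}^*(a_n)$ to be $\ne 0,I$ eventually. This is not true. Take $a_n = 0$ for $n$ even and $a_n = I$ for $n$ odd; then $(a_n)_n$ already consists of projections, $\|a_n\|=1$ along odd $n$ and $\|I-a_n\|=1$ along even $n$, so $[(a_n)_n]$ is a projection in the quotient that is neither $0$ nor $I$, and it commutes with $C^*(x)$ for \emph{every} $x$. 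Yet any lift $p_n$ of $a_n$ satisfies $p_n \in \{0,I\}$ for all $n$, so the cut-and-paste operators $x_n = p_nxp_n + (I-p_n)x(I-p_n)$ are all equal to $x$ itself and give no approximation by reducible operators. In fact this example shows (iv), read literally with ``non-trivial'' meaning $\ne 0, I$ in the quotient algebra, holds for every $x\in\mathcal M$, whereas (i) fails for some $x$ in any non-$\Gamma$ factor by Theorem~\ref{may14Thm5.14}; so the implication (iv) $\Rightarrow$ (i) cannot be closed as stated. What is actually needed, and what (iii) delivers, is a representing sequence of projections with $0 < \tau(p_n) \le 1/2$ (equivalently, $\|p_n\| = \|I-p_n\| = 1$) for all $n$, which guarantees nontriviality in $\mathcal M$ termwise and not merely in the quotient. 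Your proof would be repaired by passing to a subsequence along which $p_n \ne 0, I$ and replacing $p_n$ by $I-p_n$ where needed, but you must first rule out the degenerate case above, which requires strengthening the hypothesis (iv). To be fair, this is a flaw inherited from the paper's own formulation, not something you introduced; but since you chose to spell out this step, you should flag where the lifting actually requires more than bare nontriviality in the quotient.
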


\begin{proof}
    \ref{prop5.7.i} $\Rightarrow$ \ref{prop5.7.ii}. Suppose that $x\in \overline{\operatorname{Red}(\mathcal M)}^{\|\cdot\|}$. Then there exists a sequence $\{x_n\}^{\infty}_{n=1}$ of operators in $\operatorname{Red}(\mathcal M)$ such that $\lim_{n\to \infty}\Vert x_n-x\Vert=0$. Thus, for each reducible operator $x_n$ in $\mathcal{M}$ there exists a nontrivial projection $p_n$ in $\mathcal{M}$ such that $p_n x_n=x_n p_n$ for every $n\in\NNN$. Replacing $p_n$ by $I-p_n$ if $\tau(p_n)> 1/2$, we assume that $0<\tau(p_n)\leq 1/2$.  Note that
    \begin{equation*}
        \Vert x p_n-p_n x \Vert = \Vert x p_n-p_n x_n + x_n p_n-p_n x\Vert \leq 2 \Vert x_n-x\Vert
    \end{equation*}
    This completes the proof of the implication \ref{prop5.7.i} $\Rightarrow$ \ref{prop5.7.ii}.

    \ref{prop5.7.ii} $\Rightarrow$ \ref{prop5.7.i}. Assume that $\{p_n\}^{\infty}_{n=1}$ is a sequence of nontrivial projections in $\mathcal{M}$ such that $\lim_{n\to \infty} \|[x,p_n]\|=0$. Define $x_n=p_n x p_n + (I-p_n) x (I-p_n)$. Since $p_n$ is nontrivial, we obtain that $x_n$ is reducible in $\mathcal{M}$. Thus  $\Vert x_n-x\Vert=\Vert x p_n-p_n x \Vert$.
    This completes the proof of the implication \ref{prop5.7.ii} $\Rightarrow$ \ref{prop5.7.i}.

    \ref{prop5.7.iii} $\Leftrightarrow$ \ref{prop5.7.ii}. Note that the implication of \ref{prop5.7.iii} $\Rightarrow$ \ref{prop5.7.ii} is trivial. Assume that \ref{prop5.7.ii} holds. Define $\A$ to be a set in the following form:
    \begin{equation*}
        \A := \{y\in \M: \lim_{n \to \infty}\Vert y p_n-p_n y \Vert
        =0\}.
    \end{equation*}
    By a proof similar to that of \Cref{may18Lemma4.2}, we obtain that $\A$ is a unital $ C^* $-algebra containing $x$. Thus, $ C^* (x)\subseteq \A$. This completes the proof of the implication \ref{prop5.7.ii} $\Rightarrow$ \ref{prop5.7.iii}.

    \ref{prop5.7.iii} $\Rightarrow$ \ref{prop5.7.iv} $\Rightarrow$ \ref{prop5.7.ii}.  The implication \ref{prop5.7.iii} $\Rightarrow$ \ref{prop5.7.iv} is trivial. Now assume that \ref{prop5.7.iv} is true. Let $[(p_n)_n]$ be in $C^*(x)'\cap \left (\ell^{\infty} ( \mathcal M)/c_0(\mathcal M)\right )$ but not contained in the center of  $ \ell^{\infty} ( \mathcal M)/c_0(\mathcal M) $. From Lemma \ref{lemma4.4}, we can assume that each $p_n$ is a projection in $\mathcal M$. Note that $[(p_n)_n]$ is not in the center of  $ \ell^{\infty} ( \mathcal M)/c_0(\mathcal M) $. There must be an increasing sequence $\{n_k\}_{k=1}^\infty$ of positive integers such that $p_{n_k}$ is non-trivial for each $k$. Apparently,  $\lim_{k\to \infty} \|[x,p_{n_k}]\|=0.$ Thus  \ref{prop5.7.ii} is true.
\end{proof}

The following lemmas are useful.

\begin{lemma}\label{may15Lemma5.6}
    Let $x_1$ and $x_2$ be self-adjoint elements in $\mathcal M$. If there exist a positive number $\alpha>0$ and a projection $p\in \mathcal M$ with $\tau(p)>0$, satisfying
    \begin{equation}
        \|[x_1,p]\|_2+ \|[x_2,p]\|_2\ge \alpha \|p\|_2, \label{eqn-0}
    \end{equation}
    then
    \begin{equation*}
        \|[x_1,p]\| + \|[x_2,p]\| \ge \frac \alpha  {\sqrt 2}.
    \end{equation*}
\end{lemma}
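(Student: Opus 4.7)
My plan is to exploit the self-adjointness of $x$ and the projection property of $p$ by decomposing $[x,p]$ into its two off-diagonal corners. Setting $B := p x (I-p)$, the identity $[x,p] = (I-p)xp - px(I-p) = B^* - B$ holds (using $x = x^*$), so $[x,p]$ is skew-adjoint, and since $B^2 = (B^*)^2 = 0$ (because $(I-p)p = 0$), it follows that
\begin{equation*}
    [x,p]^*[x,p] \;=\; B^*B + BB^*,
\end{equation*}
with $B^*B \in (I-p)\mathcal M(I-p)$ and $BB^* \in p\mathcal M p$ sitting in orthogonal corners of $\mathcal M$.

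Two useful identities fall out of this decomposition. First, $\|[x,p]\|^2 = \|B^*B + BB^*\| = \max(\|B^*B\|, \|BB^*\|) = \|B\|^2$, so $\|[x,p]\| = \|B\|$. Second, $\|[x,p]\|_2^2 = \tau(B^*B + BB^*) = 2\tau(B^*B) = 2\|B\|_2^2$, so $\|[x,p]\|_2 = \sqrt{2}\,\|B\|_2$.

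The crucial inequality then comes from observing that $BB^*$ lies inside the corner $p\mathcal M p$, which forces $BB^* \le \|BB^*\|\,p = \|B\|^2 p$; taking the trace gives $\|B\|_2^2 \le \|B\|^2 \tau(p) = \|B\|^2 \|p\|_2^2$. Combining this with the two identities above produces the pointwise estimate
\begin{equation*}
    \|[x,p]\|_2 \;\le\; \sqrt{2}\,\|[x,p]\|\,\|p\|_2 \qquad \text{for every self-adjoint } x \in \mathcal M.
\end{equation*}
Applying this to $x = x_1$ and $x = x_2$, summing, and combining with the hypothesis $\|[x_1,p]\|_2 + \|[x_2,p]\|_2 \ge \alpha\|p\|_2$ yields $\alpha\|p\|_2 \le \sqrt{2}\,\|p\|_2\bigl(\|[x_1,p]\|+\|[x_2,p]\|\bigr)$. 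Since $\tau(p) > 0$ gives $\|p\|_2 > 0$, one divides through to obtain the conclusion. There is no real obstacle here; the whole argument hinges on the observation that one of the two orthogonal pieces of $[x,p]^*[x,p]$ is supported on the corner $p\mathcal M p$, which is exactly what upgrades the naive bound $\|B\|_2 \le \|B\|$ to the sharper $\|B\|_2 \le \|B\|\,\|p\|_2$ needed to absorb the factor of $\|p\|_2$ from the hypothesis.
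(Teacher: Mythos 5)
Your proof is correct. It is essentially the same as the paper's argument, differing only in how the key estimate is packaged: the paper obtains $\|(I-p)x p\|_2\le\|[x,p]\|\,\|p\|_2$ directly by applying $[x,p]$ to the unit vector $p/\|p\|_2$ in the GNS space, whereas you reach the equivalent bound $\|B\|_2\le\|B\|\,\|p\|_2$ by noting $BB^*=pBB^*p\le\|B\|^2p$ and tracing; and where the paper only needs the one-sided inequality $\|[x,p]\|\ge\|B\|$, your corner decomposition $[x,p]^*[x,p]=BB^*+B^*B$ gives the exact identity $\|[x,p]\|=\|B\|$, which is a mild strengthening but not needed. Both routes rest on the same two ingredients --- the Pythagorean splitting $\|[x,p]\|_2^2=2\|B\|_2^2$ coming from self-adjointness of $x$, and the absorption of $\|p\|_2$ from the operator norm --- so the proofs are equivalent in substance.
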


\begin{proof}
    By the definition of the operator norm, we have
    \begin{equation}\label{eqn-1}
        \Vert x_ip-px_i \Vert \geq \Vert (x_ip-px_i)\frac{p}{\Vert p \Vert_2}
         \Vert_2= \frac{1}{\Vert p \Vert_2} \Vert (I-p)x_ip \Vert_2 \ \mbox{ for } \ i=1,2.
    \end{equation}
    Since the equality $\Vert (I-p)x_i p \Vert^2_2=\Vert p x_i (I-p) \Vert^2_2$ holds for $i=1,2$, it follows that
    \begin{equation}\label{eqn-2}
    \begin{aligned}
        \Vert x_ip-px_i \Vert^2_2 & =  \Vert (I-p)x_i p \Vert^2_2+\Vert p x_i (I-p) \Vert^2_2 \\
        & =  2 \Vert (I-p)x_i p \Vert^2_2.
    \end{aligned}
    \end{equation}
    Inequality (\ref{eqn-1}) and equality (\ref{eqn-2}) entail that
    \begin{equation}\label{eqn-3}
        \Vert x_ip-px_i \Vert \geq \frac{1}{\sqrt{2}\Vert p \Vert_2} \Vert x_ip-px_i \Vert_2  \ \mbox{ for } \ i=1,2.
    \end{equation}
    Inequalities (\ref{eqn-0}) and (\ref{eqn-3}) guarantee that
    \begin{equation*}
        \Vert x_1 p-px_1 \Vert+\Vert x_2 p-px_2 \Vert \geq \frac {\alpha} {{\sqrt
        2}}.
    \end{equation*}
    This completes the proof.
\end{proof}

\begin{lemma}\label{lemma4.7}
    Let $u_1$ and $u_2$ be unitary elements in $\mathcal M$. If there exist a positive number $\alpha>0$ and a projection $p\in \mathcal M$ with $\tau(p)>0$, satisfying
    \begin{equation}\label{eqn-5}
        \|[u_1,p]\|_2+ \|[u_2,p]\|_2\ge \alpha \|p\|_2,
    \end{equation}
  then
  \begin{equation*}
    \|[u_1,p]\| + \|[u_2,p]\| \ge \frac \alpha  {\sqrt 2}.
  \end{equation*}
\end{lemma}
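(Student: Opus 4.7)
The plan is to mirror the proof of Lemma \ref{may15Lemma5.6} essentially verbatim, replacing the one step that genuinely used self-adjointness by a short trace calculation that exploits unitarity instead. The structural chain of inequalities is the same: first, testing the commutator $[u_i,p]$ against the unit $\|\cdot\|_2$-vector $p/\|p\|_2$ gives the operator-norm lower bound
\begin{equation*}
\|u_ip-pu_i\| \;\ge\; \frac{1}{\|p\|_2}\,\|(I-p)u_ip\|_2, \qquad i=1,2,
\end{equation*}
with no hypothesis on $u_i$. Next, since $(I-p)u_ip$ and $pu_i(I-p)$ live in orthogonal corners of $\mathcal M$ (their product in either order vanishes after a trace pairing), Pythagoras in $\|\cdot\|_2$ gives
\begin{equation*}
\|[u_i,p]\|_2^{2} \;=\; \|(I-p)u_ip\|_2^{2} + \|pu_i(I-p)\|_2^{2}.
\end{equation*}

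The one step to redo is the equality $\|(I-p)u_ip\|_2 = \|pu_i(I-p)\|_2$, which for self-adjoint $x_i$ was a one-line application of $*$. For a unitary $u_i$ I would compute both sides directly using trace-cyclicity together with $u_i^{*}u_i = u_iu_i^{*}=I$:
\begin{equation*}
\|(I-p)u_ip\|_2^{2} \;=\; \tau(pu_i^{*}(I-p)u_ip) \;=\; \tau(p)-\tau(pu_i^{*}pu_ip),
\end{equation*}
and expanding the other side,
\begin{equation*}
\|pu_i(I-p)\|_2^{2} \;=\; \tau((I-p)u_i^{*}pu_i(I-p)) \;=\; \tau(u_i^{*}pu_i) - 2\tau(u_i^{*}pu_ip) + \tau(pu_i^{*}pu_ip),
\end{equation*}
after which the trace identities $\tau(u_i^{*}pu_i)=\tau(p)$ and $\tau(u_i^{*}pu_ip)=\tau(pu_i^{*}pu_ip)$ (the latter because $p^{2}=p$ and by cyclicity) collapse the right-hand side to the same quantity $\tau(p)-\tau(pu_i^{*}pu_ip)$.

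With this equality in hand, Pythagoras rearranges to $\|[u_i,p]\|_2 = \sqrt{2}\,\|(I-p)u_ip\|_2$, and feeding this into the first inequality produces
\begin{equation*}
\|[u_i,p]\| \;\ge\; \frac{1}{\sqrt{2}\,\|p\|_2}\,\|[u_i,p]\|_2, \qquad i=1,2.
\end{equation*}
Summing over $i=1,2$ and invoking the hypothesis (\ref{eqn-5}) then yields the desired bound $\|[u_1,p]\|+\|[u_2,p]\| \ge \alpha/\sqrt{2}$. The only point that requires any care at all is the trace computation in the previous paragraph; everything else is mechanical and identical to the self-adjoint case.
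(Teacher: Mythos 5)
Your proof is correct and follows essentially the same route as the paper: the only variation is in establishing $\|(I-p)u_ip\|_2 = \|pu_i(I-p)\|_2$, where you expand the traces directly while the paper instead compares the two Pythagorean decompositions of $\|u_ip\|_2^2 = \tau(p) = \|pu_i\|_2^2$; both are a one-line use of unitarity and cyclicity. The remaining steps (testing against $p/\|p\|_2$, Pythagoras for the commutator, summing and invoking the hypothesis) are identical to the paper's argument.
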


\begin{proof}
    Note that, for $i=1,2$, the equality
    \begin{equation*}
        \Vert p u_i p \Vert^2_2+\Vert (I-p)u_ip \Vert^2_2=\Vert u_ip \Vert^2_2=\tau(p)=\Vert p u_i \Vert^2_2 = \Vert p u_i p \Vert^2_2+\Vert pu_i(I-p)
        \Vert^2_2
    \end{equation*}
    yields that
    \begin{equation*}
        \Vert (I-p)u_ip \Vert^2_2=\Vert p u_i (I-p)\Vert^2_2.
    \end{equation*}
    The rest of the proof is the same as that of \Cref{may15Lemma5.6}.
    This completes the proof.
\end{proof}

In the following result, we show that, in a non-$\Gamma$ type ${\rm II}_1$ factor, there always exist operators not in the operator norm closure of the reducible ones.

\begin{theorem}\label{may14Thm5.14}
Let $\mathcal M$ be a non-$\Gamma$ type ${\rm II}_1$ factor. Then $\overline{\operatorname{Red}(\mathcal M)}^{\|\cdot\|}\ne \mathcal M$.
\end{theorem}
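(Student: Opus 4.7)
\noindent\textbf{Proof proposal for Theorem \ref{may14Thm5.14}.} The plan is to produce a single element $x\in\mathcal M$ that cannot lie in $\operatorname{Red}(\mathcal M)^{\|\cdot\|}$ by combining the spectral-gap estimate from Theorem \ref{prop5.3}, the $\|\cdot\|$-to-$\|\cdot\|_2$ conversion of Lemma \ref{may15Lemma5.6}, and the commutator characterization of $\operatorname{Red}(\mathcal M)^{\|\cdot\|}$ given by Proposition \ref{may15Prop5.5}. Since $\mathcal M$ is non-$\Gamma$, Theorem \ref{prop5.3} supplies self-adjoint elements $x_1,x_2\in\mathcal M$ and a constant $\alpha>0$ with
\begin{equation*}
  \|[x_1,e]\|_2+\|[x_2,e]\|_2\;\ge\;\alpha\,\|e\|_2\|I-e\|_2
\end{equation*}
for every projection $e\in\mathcal M$. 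I set $x:=x_1+ix_2$ and claim that $x\notin \operatorname{Red}(\mathcal M)^{\|\cdot\|}$, which immediately yields the theorem.

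Assume for contradiction that $x\in\operatorname{Red}(\mathcal M)^{\|\cdot\|}$. By the equivalence \ref{prop5.7.i}$\Leftrightarrow$\ref{prop5.7.ii} of Proposition \ref{may15Prop5.5}, there is a sequence of projections $\{p_n\}_{n\ge 1}\subseteq \mathcal M$ with $0<\tau(p_n)\le 1/2$ and $\|[x,p_n]\|\to 0$. The trace bound $\tau(p_n)\le 1/2$ gives $\|I-p_n\|_2=\sqrt{1-\tau(p_n)}\ge 1/\sqrt 2$, so the spectral-gap inequality applied to $e=p_n$ reads
\begin{equation*}
  \|[x_1,p_n]\|_2+\|[x_2,p_n]\|_2\;\ge\;\alpha\,\|p_n\|_2\|I-p_n\|_2\;\ge\;\frac{\alpha}{\sqrt 2}\,\|p_n\|_2.
\end{equation*}
At least one of the two summands is therefore at least $\tfrac{\alpha}{2\sqrt 2}\|p_n\|_2$, and applying Lemma \ref{may15Lemma5.6} to that index (or to both and summing) yields
\begin{equation*}
  \|[x_1,p_n]\|+\|[x_2,p_n]\|\;\ge\;\frac{\alpha}{2}.
\end{equation*}

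The final step is to bound $\|[x_i,p_n]\|$ by $\|[x,p_n]\|$ for $i=1,2$. Since $x_1,x_2,p_n$ are self-adjoint, $[x^*,p_n]=-[x,p_n]^*$, so $\|[x^*,p_n]\|=\|[x,p_n]\|$; writing $x_1=\tfrac12(x+x^*)$ and $x_2=\tfrac{1}{2i}(x-x^*)$ gives $\|[x_i,p_n]\|\le \|[x,p_n]\|$ for $i=1,2$. Combining,
\begin{equation*}
  2\,\|[x,p_n]\|\;\ge\;\|[x_1,p_n]\|+\|[x_2,p_n]\|\;\ge\;\frac{\alpha}{2},
\end{equation*}
so $\|[x,p_n]\|\ge \alpha/4$ for all $n$, contradicting $\|[x,p_n]\|\to 0$. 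Hence $x\notin \operatorname{Red}(\mathcal M)^{\|\cdot\|}$, completing the proof.

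No step looks genuinely hard: all the substantive work has already been done in Theorem \ref{prop5.3} (the existence of the spectral gap), in Lemma \ref{may15Lemma5.6} (upgrading a $\|\cdot\|_2$-lower bound to a $\|\cdot\|$-lower bound on commutators with a projection), and in Proposition \ref{may15Prop5.5} (characterizing the operator-norm closure of reducible elements by the existence of an almost-commuting non-trivial projection sequence). The only point to watch is the elementary bookkeeping that relates $\|[x_i,p_n]\|$ to $\|[x,p_n]\|$ via the adjoint.
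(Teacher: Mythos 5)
Your proof is correct and follows essentially the same route as the paper: Theorem \ref{prop5.3} supplies the spectral-gap pair $x_1,x_2$ with constant $\alpha$, Lemma \ref{may15Lemma5.6} upgrades the $\|\cdot\|_2$ commutator bound to $\|[x_1,e]\|+\|[x_2,e]\|\ge\alpha/2$ for nontrivial projections with $\tau(e)\le 1/2$, and Proposition \ref{may15Prop5.5} then excludes $x_1+ix_2$ from $\operatorname{Red}(\mathcal M)^{\|\cdot\|}$. The only (cosmetic) difference is that you invoke item \ref{prop5.7.ii} of Proposition \ref{may15Prop5.5} and pass from $\|[x,p_n]\|$ to $\|[x_i,p_n]\|$ by the adjoint identity, whereas the paper appeals to item \ref{prop5.7.iii}, which hands you smallness of $\|[y,p_n]\|$ for every $y\in{\rm C}^*(x)$ (in particular $y=x_1,x_2$) directly.
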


\begin{proof}
    It follows from Theorem \ref{prop5.3} that there exist two self-adjoint elements $x_1$ and $x_2$ in $\mathcal M$ and a positive number $ \alpha$ such that
    \begin{equation*}
        \|[x_1,e ]\|_2+ \|[x_2,e ]\|_2 \ge  \alpha \|e\|_2\|I-e\|_2, \quad \text { for every projection  } e\in \mathcal
        M.
    \end{equation*}
    By Lemma \ref{may15Lemma5.6}, we obtain that
    \begin{equation*}
        \|[x_1,e ]\|+ \|[x_2,e ]\| \ge \alpha/2 , \  \text { for every projection  } e\in \mathcal M \text{ with } 0<\tau(e)\le
        1/2.
    \end{equation*}
    From Proposition \ref{may15Prop5.5}, it follows that  $x_1+ix_2\notin \overline{\operatorname{Red}(\mathcal M)}^{\|\cdot\|}$, i.e., $\overline{\operatorname{Red}(\mathcal M)}^{\|\cdot\|}\ne \mathcal M$.
\end{proof}

In the next example, we construct an explicit operator in the free group factor $L(\mathbb{F}_2)$ such that it is not in the operator norm closure of the reducible ones.

\begin{example}
    Denote by $u_1$ and $u_2$ the unitary operators in $L(\mathbb{F}_2)$ corresponding to the two generators of $\mathbb{F}_2$. 
    For every element $y$ in $L(\FFF_2)$, from Theorem $6.7.8$ in \cite{Kadison2} it follows that
    \begin{equation}\label{u_1&u_2}
        \Vert [u_1,y] \Vert_2+\Vert [u_2,y] \Vert_2 \geq \frac{\Vert y - \tau(y) \Vert_2}{12}.
    \end{equation}
    Combining it with Inequality $(\ref{u_1&u_2})$ and Lemma $\ref{lemma4.7}$, for a  nontrivial projection $p$ in $L(\FFF_2)$  with $0<\tau(p) \leq 1/2$, it follows that
    \begin{equation}\label{u_i&p-5}
        \Vert u_1 p - p u_1 \Vert+\Vert u_2 p - p u_2 \Vert \geq \frac{1}{24}.
    \end{equation}
    By Theorem $5.2.5$ of \cite{Kadison1}, there are two positive operators $a_1$ and $a_2$ in $ L(\FFF_2)$ such that
    \begin{equation*}
        u_1=e^{ ia_1}, \quad u_2=e^{ ia_2}, \quad \mbox{ and } \quad  \Vert a_k\Vert\leq 2\pi,\quad \mbox{ for }
        k=1,2.
    \end{equation*}
    We claim that, for $x=a_1+ia_2$, there exists an $\varepsilon_0>0$ such that
    \begin{equation}\label{u_i&p-6}
        \Vert xp-px \Vert \geq \varepsilon_0 \ \mbox{ for every nontrival projection }\ p\in \M.
    \end{equation}
    Otherwise, there is a sequence of projections $\{p_n\}^{\infty}_{n=1}$ in $ L(\FFF_2)$ such that the inequalities
    \begin{equation*}
        \Vert x p_n-p_n x \Vert\leq \frac{1}{n} \quad \mbox{ and } \quad 0< \tau(p_n)\leq
        \frac{1}{2}
    \end{equation*}
    hold for each integer $n\geq 1$.
    This yields that $\lim_{n\rightarrow \infty}\Vert u_k p_n-p_n u_k \Vert=0$ for $k=1,2$, which contradicts the inequality in $(\ref{u_i&p-5})$. This ends the proof of $(\ref{u_i&p-6})$.
\end{example}

\section{Nowhere-dense-property of reducible operators in non-\texorpdfstring{$\Gamma$} \ \ type \texorpdfstring{${\rm II}_1$} \ \ factors}

The goal of this section is to prove that the set of reducible operators in each non-$\Gamma$ type ${\rm II}_1$ factor is nowhere dense, in the operator norm topology. For this purpose, we introduce the following definition.

\begin{definition}
Let $\mathcal B$ be a unital $C^*$-algebra with an identity $I_{\mathcal B}$,  and let $\mathcal A\subseteq \mathcal B$ be a $C^*$-subalgebra containing $I_{\mathcal B}$. Then an element $x\in \mathcal A$ is called \textbf{reducible in }$\mathcal B$ if there exists a projection $p$ in $\mathcal B\setminus \mathcal Z(\mathcal B)$ such that $xp=px$, where $\mathcal Z(\mathcal B)$ is the center of $\mathcal B$.

Define  $\operatorname{Red}(\mathcal A:\mathcal B)$ to be the set of all these elements of $\mathcal A$ that are reducible in $\mathcal B$, i.e.,
\begin{equation*}
    \operatorname{Red}(\mathcal A:\mathcal B) =\{x\in \mathcal A: \text{ there exists a   projection $p$ in  $\mathcal B\setminus \mathcal Z(\mathcal B)$ such that }
    xp=px\}.
\end{equation*}
\end{definition}

\begin{remark}
    Let $\mathcal M$ be a type ${\rm II}_1$ factor. If $\mathcal A=\mathcal B=\mathcal M$, then $\operatorname{Red}(\mathcal M:\mathcal M)=\operatorname{Red}(\mathcal M)$, where $\operatorname{Red}(\mathcal M)$ is introduced in Definition $\ref{red-opts}$.
\end{remark}

\begin{remark}
    Let $\mathcal{H}$ be an infinite-dimensional, complex, separable Hilbert space and $ \mathcal B(\mathcal H) $   the algebra of all the bounded linear operators on $\mathcal{H}$. Then, as an application of Voiculescu's non-commutative Weyl-von Neumann theorem in \cite{Voi2}, it is well-known that
    \begin{equation*}
        \operatorname{Red}\Big( \mathcal B(\mathcal H) :  {\ell^{\infty} \big( \mathcal B(\mathcal H) \big)}/{c_0\big( \mathcal B(\mathcal H) }\big)\Big)= \mathcal B(\mathcal H) .
    \end{equation*}
\end{remark}

\begin{remark}\label{M:M-omega}
    Let $\mathcal M$ be a type ${\rm II}_1$ factor and $\omega$ a free ultrafilter on $\mathbb N$. Then, by Proposition $\ref{prop3.9-gamma}$, $\mathcal M$ has property $\Gamma$ if and only if
    \begin{equation*}
        \operatorname{Red}(\mathcal M: \mathcal M^\omega) =\mathcal
        M.
    \end{equation*}
\end{remark}

\begin{proposition}\label{N:B}
    Let $\mathcal B$ be a unital $C^*$-algebra with an identity $I_{\mathcal B}$, and let $\mathcal M$ be a factor of type ${\rm II}_1$ such that $I_{\mathcal B}\in \mathcal M\subseteq \mathcal B$. If $ p\mathcal Mp\setminus\operatorname{Red}(p\mathcal Mp: p\mathcal Bp)\ne \emptyset$   for any nonzero projection $p$ in $\mathcal M$, then  $\mathcal M\setminus \operatorname{Red}(\mathcal M: \mathcal B)$ is dense in $\mathcal M$ in the operator norm topology. 
\end{proposition}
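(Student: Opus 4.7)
Given $x \in \mathcal M$ and $\epsilon > 0$, the plan is to produce $y \in \mathcal M$ with $\|y - x\| < \epsilon$ such that no nontrivial projection of $\mathcal B$ commutes with $y$. Write $x = x_1 + ix_2$ with $x_1,x_2$ self-adjoint. I would begin by applying the spectral theorem to $x_1$ (and simultaneously reshuffling $x_2$ via a Cut-and-Paste adjustment in the spirit of Theorem 4.1 of \cite{Shen}) to obtain, after a norm-small preliminary perturbation, mutually orthogonal pairwise equivalent nonzero projections $\{p_i\}_{i=1}^n$ in $\mathcal M$ summing to $I$, scalars $\lambda_1<\cdots<\lambda_n$ separated by a uniform gap $\delta_\ast$, and partial isometries $v_i\in\mathcal M$ with $v_i^\ast v_i = p_1$, $v_i v_i^\ast = p_i$ for $i\ge 2$, in such a way that $\|x_1 - \sum \lambda_i p_i\|$ and $\|p_1 x_2 p_1\|$ are both small. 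The hypothesis of the proposition, applied at $p_1$, then furnishes an element $w \in p_1\mathcal Mp_1\setminus\operatorname{Red}(p_1\mathcal Mp_1:p_1\mathcal Bp_1)$, which I scale so that $\|w\|$ is tiny compared to $\delta_\ast$ and $\epsilon$.

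The candidate is
\[
y = \Bigl(\lambda_1 p_1 + w - i\,p_1 x_2 p_1 + \sum_{i\ge 2}\lambda_i p_i\Bigr) + i(x_2 + c),
\qquad
c = \delta \sum_{i=2}^{n}(v_i+v_i^\ast),
\]
with $\delta>0$ chosen so that $\|c\|<\epsilon/4$. The $p_1$-corner of $y$ simplifies to $\lambda_1 p_1 + w$, while for $j\ge 2$ the $(1,j)$-corner becomes $p_1 x_2 p_j \cdot i + i\delta v_j^\ast$, providing the coupling needed below. A short estimate giving $\|y-x\| \le \|\sum\lambda_i p_i - x_1\| + \|w\| + \|c\| + \|p_1 x_2 p_1\| < \epsilon$ uses the preliminary smallness of $\|p_1 x_2 p_1\|$.

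To verify non-reducibility, suppose $q\in\mathcal B$ is a projection with $qy = yq$. Taking the self-adjoint and anti-self-adjoint parts of the relation, $q$ commutes separately with $\operatorname{Re}(y)$ and $\operatorname{Im}(y)$. The blocks of $\operatorname{Re}(y) = \lambda_1 p_1 + \operatorname{Re}(w) + \sum_{i\ge 2}\lambda_i p_i$ on the various $p_i\mathcal Mp_i$ have pairwise disjoint spectra (thanks to the gap $\delta_\ast$ and the smallness of $\|w\|$), so Lemma~\ref{lemma5.2} places every $p_i$ in ${\rm C}^\ast(\operatorname{Re}(y))$; hence $q = \sum_i q_i$ is block-diagonal with $q_i\in p_i\mathcal Bp_i$. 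Evaluating at the $p_1$-corner, $q_1$ commutes with $p_1 y p_1 = \lambda_1 p_1 + w$, so with $w$, and non-reducibility of $w$ forces $q_1\in\{0,p_1\}$.

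The hard part will be propagating this dichotomy to every $q_j$ via the commutation $q\operatorname{Im}(y) = \operatorname{Im}(y)q$. The $(1,j)$-block reads
\[
q_1(p_1 x_2 p_j + \delta v_j^\ast) = (p_1 x_2 p_j + \delta v_j^\ast)q_j,
\]
and a direct norm estimate only forces $q_j$ to be trivial when $\delta$ dominates $\|p_1 x_2 p_j\|$, which is incompatible with $\|c\|<\epsilon/4$. The way around this is to extend the preliminary Cut-and-Paste step so that \emph{every} off-diagonal $p_1 x_2 p_j$ (not only $p_1 x_2 p_1$) is made small enough to be dominated by $\delta$, exploiting the fact that in a type ${\rm II}_1$ factor, matrix units can absorb off-diagonals of bounded operators at the cost of a norm-small perturbation once the refinement $\{p_i\}$ is chosen finely. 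With the parameters $n$, $\delta$ and $\delta_\ast$ calibrated compatibly, the displayed equation determines each $q_j$ from $q_1$ in the appropriate dichotomy, yielding $q\in\{0,I\}$ and completing the argument.
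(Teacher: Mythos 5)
The structural difference between your plan and the paper's is that you place an irreducible element only in the first corner $p_1\mathcal M p_1$ and attempt to propagate the dichotomy $q_1\in\{0,p_1\}$ to the other corners through the off-diagonal blocks, whereas the paper places an irreducible element $a_k+ib_k$ in \emph{every} corner $p_k\mathcal M p_k$, so that $q_k\in\{0,p_k\}$ is established a priori for each $k$. This difference is essential and is exactly where your argument has a gap.

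With only $q_1\in\{0,p_1\}$ known, the equation $q_1(p_1 x_2 p_j+\delta v_j^*)=(p_1 x_2 p_j+\delta v_j^*)q_j$ determines $q_j$ only if the block $p_1 x_2 p_j+\delta v_j^*$ is left-invertible, which requires $\delta$ to dominate $\|p_1 x_2 p_j\|$ in \emph{operator} norm. You propose to make every $\|p_1 x_2 p_j\|$ small by refining $\{p_i\}$, invoking a Cut-and-Paste-type adjustment. This does not work: Theorem 4.1 of \cite{Shen} and the surrounding refinement machinery control off-diagonals only in $\|\cdot\|_2$, not in operator norm, and in operator norm the off-diagonal blocks of a fixed operator need not shrink under refinement (already $\|p_1 x_2(I-p_1)\|$ typically remains comparable to $\|x_2\|$ however small $\tau(p_1)$ is). There is moreover a quantitative obstruction that defeats even favorable cases: since the $v_j$ have common initial projection $p_1$ and orthogonal final projections, $\|c\|=\delta\bigl\|\sum_{j\ge 2}(v_j+v_j^*)\bigr\|=\delta\sqrt{n-1}$, so $\|c\|<\epsilon/4$ forces $\delta<\epsilon/(4\sqrt{n-1})$, while left-invertibility forces $\delta>\|p_1 x_2 p_j\|$. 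These two demands are incompatible unless $\|p_1 x_2 p_j\|$ could be driven below $\epsilon/(4\sqrt{n-1})$ for all $j$, a decay rate that refining the partition does not deliver. (A secondary issue of the same kind: $\|p_1 x_2 p_1\|$ itself cannot be made small by refinement; what can be made small is $\|p_1 x_2 p_1-\beta p_1\|$ for a suitable scalar $\beta$, which is what the paper actually uses.)

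The paper bypasses all of this: because $q_k\in\{0,p_k\}$ is forced at each $k$ by the local irreducible elements $a_k+ib_k$, the off-diagonal coupling only needs to be \emph{nonzero}, not quantitatively left-invertible. Condition (g) in the paper's proof, that $z_k+p_k x_2 p_{k+1}\ne 0$, costs nothing, and the commutation $q_k(z_k+p_k x_2 p_{k+1})=(z_k+p_k x_2 p_{k+1})q_{k+1}$ then propagates the dichotomy along the chain $p_1\to\cdots\to p_n$. To salvage your argument you would need to adopt this all-corners construction; the quantitative left-invertibility your single-corner plan requires is not available under an $O(\epsilon)$ operator-norm perturbation.
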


\begin{proof}
    Suppose that $x = x_1+ i x_2$ is an element in $\mathcal{M}$, where $x_1$ and $x_2$ are two self-adjoint operators in $\mathcal{M}$. Let $\varepsilon$ be a positive number.

    By spectral theory for   $x_1$, there exist an $m\in\mathbb N$, an orthogonal family of projections $\{q_{k}\}^{m}_{k=1}$ in $\mathcal{M}$ with sum $I$ and a family of  real numbers $\{\lambda_{k}\}^{m}_{k=1}$ such that
    \begin{equation}
        \|x_1-\sum\nolimits^{m}_{k=1} \lambda_k q_k \|\le
        \label{June28Equa6.1}
        \varepsilon/16.
    \end{equation}
    For each $1\le k\le m$,  by spectral theory for  $q_{k}x_2q_{k}$, there exist  an $n_k\in\mathbb N$, an orthogonal family of subprojections $\{q_{k,j}\}^{n_k}_{j=1}$ of $q_k$  with sum $q_k$ and a family of real  numbers $\{\eta_{k,j}\}^{n_k}_{j=1}$ such that $\|q_{k}x_2q_{k}- \sum\nolimits_{j=1}^{n_k}\eta_{k,j} q_{k,j} \|\le \varepsilon/16,$   in particular
    \begin{equation}
    \|q_{k,j}x_2q_{k,j}-  \eta_{k,j} q_{k,j} \|\le \varepsilon/16. \label{June28Equa6.2}
    \end{equation}
    Let $n=n_1+ \cdots + n_m$ and list $\{q_{k,j} : 1\le j\le n_k, 1\le k\le m\}$ as $\{p_k\}_{k=1}^n$ with
    $$\tau(p_1)\ge \tau(p_2)\ge \ldots \ge \tau(p_n). $$ By the inequalities (\ref{June28Equa6.1}) and (\ref{June28Equa6.2}), with a small perturbation, we can assume that there exist families of distinct  real numbers $\{\alpha_{k}\}^{n}_{k=1}$ and $\{\beta_{k}\}^{n}_{k=1}$  such that,  for $1\leq k \leq n$,
    \begin{enumerate}[label={\rm (\alph*)}]
        \item \label{prop6.5.1} $\alpha_k\ne 0$ ;
        \item \label{prop6.5.2} $\|x_1-\sum_k \alpha_k p_k \|\le \varepsilon/8;$
        \item \label{prop6.5.3} $\Vert p_k x_2 p_k -  \beta_k p_k   \Vert \leq   { \varepsilon}/{8}.$
    \end{enumerate}

    In terms of the condition that $ p\mathcal Mp\setminus\operatorname{Red}(p\mathcal Mp: p\mathcal Bp)\ne \emptyset$,   there exists a family of elements $\{a_{k}+i b_{k}\in p_k \M p_k \backslash \operatorname{Red}(p_k \mathcal M p_k: p_k \mathcal B p_k)\}^{n}_{k=1}$ with $a_k$ and $b_k$ being self-adjoint  such that
    \begin{enumerate}[resume,label={\rm (\alph*)}]
        \item  \label{prop6.5.4} $
            \Vert   a_k  \Vert \leq  { \varepsilon} /{8} \ \mbox{ and } \  \Vert  b_k  \Vert \leq   { \varepsilon}/ {8}, $ for $1\leq k \leq n$;
        \item  \label{prop6.5.5} for $1\leq j\neq k \leq n$,
        \begin{equation*}
        0\notin \sigma_{p_k \mathcal M p_k}(\alpha_k p_k+a_k) \text{ and }  \sigma_{p_j \mathcal M p_j}(\alpha_j p_j+a_j)\cap\sigma_{p_k \mathcal M p_k}(\alpha_k p_k+a_k)=\emptyset.
        \end{equation*}
    \end{enumerate}
    Note that $\mathcal{M}$ is a type ${\rm II}_1$ factor. For all $1\le i<j\le n$,  since $p_i\mathcal Mp_{j}\ne \{0\}$, we can choose $z_{ij}$ be an element in $p_i\mathcal Mp_{j}$   such that
      \begin{enumerate} [resume,label={\rm (\alph*)}]
        \item \label{Junly3prop6.5.6}  $\|z_{ij}-p_ix_2p_j\|\le \varepsilon/(8n^2)$;
         \item \label{Junly3prop6.5.7}  if $z_{ij}=v_{ij}h_{ij}$ is a polar decomposition of $z_{ij}$ in $\mathcal M$, then $v_{ij}^*v_{ij}=p_i$ and $h_{ij}$ is invertible in $p_j\mathcal Mp_j$.
    \end{enumerate}
    In fact, assume that $p_ix_2p_j=vh$ is a polar decomposition of $p_ix_2p_j$ in $\mathcal M$. From the assumption that $\tau(p_i)\ge \tau(p_j)$, there exists a partial isometry $u$ in $\mathcal M$ such that $u^*u=p_j-v^*v$ and $uu^*\le p_i-vv^*$. Let $f:\mathbb R\rightarrow \mathbb R$ be a function such that $f(t)=\varepsilon/(8n^2)$ for $t\le \varepsilon/(8n^2)$ and $f(t)=t$ for $t> \varepsilon/(8n^2)$. Then $vf(h)+(\varepsilon/(8n^2))u$ is an operator in $\mathcal M$ satisfying \ref{Junly3prop6.5.6} and \ref {Junly3prop6.5.7}.

    Define self-adjoint operators $y_1$ and $y_2$ in $\mathcal{M}$ in the following form
    \begin{equation*}
        y_1=\sum^{n}_{k=1}(\alpha_k p_k+a_k) \quad \mbox{ and } \quad y_2=\sum^{n}_{k=1}(\beta_k p_k+b_k)+\sum^{}_{ 1\leq k < \ell\leq n  } (z_{k \ell}+z_{k \ell}^*)
         .
    \end{equation*}
    From \ref{prop6.5.2}, \ref{prop6.5.3},  \ref{prop6.5.4} and \ref{Junly3prop6.5.6} , it follows that $\Vert (x_1 +i x_2)-(y_1+i y_2)\Vert\leq \varepsilon$.

    To complete the proof, we need only to show that $y_1+i y_2$ belongs to $\mathcal M\setminus \operatorname{Red}(\mathcal M: \mathcal B)$.
     Suppose that $q$ is a projection in $\B$ such that $[q,y_1+i y_2]=0$.  To prove $y_1+i y_2\in \mathcal M\setminus \operatorname{Red}(\mathcal M: \mathcal B)$, it suffices to prove that $q$ is in $\mathcal Z(\mathcal B)$, where $\mathcal Z(\mathcal B)$ is the center of $\mathcal B$.

    Now  \ref{prop6.5.5} and Lemma \ref{lemma5.2} entail that
   $
        p_1, \ldots, p_n, a_1, \ldots, a_n  \text { are in }  C^* (y_1,
        ).
    $ By computing  $p_ky_2p_k$ for $1\le k\le n$, we obtain that $  b_1, \ldots, b_n \text { are in }  C^* (y_1,
        y_2).$
    From $[q,y_1+iy_2]=0$, it follows that $q=q_1+\cdots+q_n$ and $q_k a_k=a_k q_k$, where  $q_k=p_kq$ is a sub-projection of $p_k$ for $1\leq k \leq n$. That $[q,y_1+i y_2]=0$ also implies that $q_k b_k=b_k q_k$ for $1\leq k \leq n$. Therefore, $[a_{k}+i b_{k},q_k]=0$.

    Since $a_{k}+i b_{k}\in p_k \M p_k \backslash \operatorname{Red}(p_k \mathcal M p_k: p_k \mathcal B p_k)$, the equality $[a_{k}+i b_{k},q_k]=0$ implies that $q_k\in \mathcal Z(p_k\mathcal B p_k)$. Thus, for each operator $b$ in $\mathcal B$, we know that
     \begin{equation}
       q_k (p_kbp_k)=   (p_kbp_k)q_k , \qquad \forall \ 1\le k\le n. \label{Sept14Equa6.3}
    \end{equation}

    For $1\le k< \ell \le n$, notice that $p_ky_2p_{\ell}\in  C^* (y_1,y_2)$. It follows that
    \begin{equation*}
        q(p_ky_2p_{\ell})= (p_k y_2 p_{\ell})q \quad \text{ or } \quad q_k  z_{k  \ell} =z_{k  \ell}q_{\ell}.
    \end{equation*}
    Now condition \ref{Junly3prop6.5.7} implies that, if $z_{k \ell}= v_{k \ell}h_{k \ell}$ is a polar decomposition of $z_{k \ell}$ in $\mathcal M$, then $v_{k \ell}^*v_{k \ell}=p_{\ell}$, $v_{k \ell}v_{k \ell}^*\le p_k$, and $h_{k \ell}$ is invertible in $p_{\ell}\mathcal M p_{\ell}$. Hence
    \begin{equation*}
        q_k v_{k \ell}h_{k \ell} =v_{k \ell}h_{k \ell} q_{\ell}  =v_{k \ell} q_{\ell}h_{k \ell}
    \end{equation*}
    Since $h_{k \ell}$ is invertible in $p_{\ell}\mathcal M p_{\ell}$, we conclude that $$ q_k v_{k \ell}  =v_{k \ell} q_{\ell}. $$
    For an operator $b$ in $\mathcal B$,  from the facts that $q_k\in\mathcal Z(p_k\mathcal B p_k)$, $q_{\ell}\in\mathcal Z(p_{\ell}\mathcal B p_{\ell})$, $v_{k \ell}^*v_{k \ell}= p_{\ell}$, and $ q_k v_{k \ell}  =v_{k \ell} q_{\ell}, $ we obtain that
    \begin{equation}
        p_kbp_{\ell} q_{\ell}=p_kb  (v_{k \ell}^*v_{k \ell}) q_{\ell}    = p_kb   v_{k \ell}^*  q_k v_{k \ell}  = q_k p_kb   v_{k \ell}^*v_{k \ell}    =q_k   p_k b p_{\ell}. \label{Sept14Equa6.4}
    \end{equation}

  By (\ref{Sept14Equa6.3}) and (\ref{Sept14Equa6.4}), for each operator $b$ in $\mathcal B$,we know that
   $$
   q b = bq,
   $$ 
   whence $q$ is in the center of $\mathcal B$. This ends the proof of the theorem.
\end{proof}

\begin{theorem}\label{main-goal}
    Let $\mathcal M$ be a non-$\Gamma$ type $ {\rm II}_1$ factor. Then, in the operator norm topology,  the set of reducible operators in $\mathcal M$ is nowhere dense and not closed  in $\mathcal M$.
\end{theorem}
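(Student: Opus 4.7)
The ``not closed'' statement was already established in Proposition \ref{June28Prop5.11}\ref{prop5.5.ii}, so the content is the nowhere-density assertion. My plan is to apply Proposition \ref{N:B} with the ambient algebra $\mathcal B = l^\infty(\mathcal M)/c_0(\mathcal M)$, which is precisely the auxiliary ${\rm C}^*$-algebra already used in Proposition \ref{may15Prop5.5}. The whole argument is driven by the equivalence \ref{prop5.7.i}$\Leftrightarrow$\ref{prop5.7.iv} of that proposition, together with the stability of the non-$\Gamma$ property under corners.

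First I would record the identification
\[
  \operatorname{Red}(\mathcal M)^{\|\cdot\|}
   \;=\;\operatorname{Red}\bigl(\mathcal M:\,l^\infty(\mathcal M)/c_0(\mathcal M)\bigr),
\]
which is exactly Proposition \ref{may15Prop5.5} (any projection $q$ in $l^\infty(\mathcal M)/c_0(\mathcal M)$ commuting with $x$ automatically commutes with $x^*$, hence with ${\rm C}^*(x)$). Next, for a nonzero projection $p\in\mathcal M$, I would observe that the natural map $(a_n)_n\mapsto (pa_np)_n$ identifies $p\bigl(l^\infty(\mathcal M)/c_0(\mathcal M)\bigr)p$ with $l^\infty(p\mathcal M p)/c_0(p\mathcal M p)$, so that under this identification
\[
  \operatorname{Red}\bigl(p\mathcal M p:\,p(l^\infty(\mathcal M)/c_0(\mathcal M))p\bigr)
  \;=\;\operatorname{Red}(p\mathcal M p)^{\|\cdot\|}.
\]

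The key input is then that $p\mathcal M p$ is itself a non-$\Gamma$ type ${\rm II}_1$ factor (Lemma \ref{lemma2.2}); Theorem \ref{may14Thm5.14} applied inside $p\mathcal M p$ supplies an element $y\in p\mathcal M p$ with $y\notin \operatorname{Red}(p\mathcal M p)^{\|\cdot\|}$. By the two identifications above, $y$ witnesses
\[
  p\mathcal M p\setminus
  \operatorname{Red}\bigl(p\mathcal M p:\,p(l^\infty(\mathcal M)/c_0(\mathcal M))p\bigr)\neq\emptyset
\]
for every nonzero projection $p\in\mathcal M$. Proposition \ref{N:B} then immediately yields that $\mathcal M\setminus\operatorname{Red}(\mathcal M)^{\|\cdot\|}$ is operator-norm dense in $\mathcal M$, i.e.\ $\operatorname{Red}(\mathcal M)^{\|\cdot\|}$ has empty interior, which is precisely the nowhere density of $\operatorname{Red}(\mathcal M)$.

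The main obstacle I anticipate is purely bookkeeping: one must check that the corner $p(l^\infty(\mathcal M)/c_0(\mathcal M))p$ really does coincide with the ${\rm C}^*$-algebra $l^\infty(p\mathcal M p)/c_0(p\mathcal M p)$ used to detect reducible operators in $p\mathcal M p$ via Proposition \ref{may15Prop5.5}. Once that identification is in place, the theorem follows by chaining Lemma \ref{lemma2.2}, Theorem \ref{may14Thm5.14}, Proposition \ref{may15Prop5.5}, and Proposition \ref{N:B}, and the ``not closed'' clause is quoted from Proposition \ref{June28Prop5.11}\ref{prop5.5.ii}.
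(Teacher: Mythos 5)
Your proposal is correct and follows essentially the same route as the paper: identify $\operatorname{Red}(\mathcal M)^{\|\cdot\|}$ with $\operatorname{Red}(\mathcal M : l^\infty(\mathcal M)/c_0(\mathcal M))$ via Proposition \ref{may15Prop5.5}, use Lemma \ref{lemma2.2} and Theorem \ref{may14Thm5.14} to produce an element of $p\mathcal Mp$ outside $\operatorname{Red}(p\mathcal Mp)^{\|\cdot\|}$ for every nonzero projection $p$, and feed this into Proposition \ref{N:B}. The one place you are slightly more careful than the paper is in spelling out the (true and elementary) identification $p\bigl(l^\infty(\mathcal M)/c_0(\mathcal M)\bigr)p \cong l^\infty(p\mathcal Mp)/c_0(p\mathcal Mp)$, which the paper asserts without comment.
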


\begin{proof}  
    It has been shown in Proposition \ref{June28Prop5.11} that, in the operator norm topology,  $\operatorname{Red}(\mathcal{M} )$ is not closed in $\mathcal{M}$. Next, we prove that $\operatorname{Red}(  \mathcal{M}  )$  is nowhere dense in $\mathcal M$.

    Let $\B = \ell^{\infty} ( \mathcal M)/c_0(\mathcal M)$. From Proposition \ref{may15Prop5.5},  we have
    \begin{equation*}
        \overline{\operatorname{Red}( \mathcal{M} )}^{\Vert\cdot\Vert}=\operatorname{Red}  ( \mathcal{M} : \mathcal{B} )
    \end{equation*}
    and, for any nonzero projection $p$ in $\mathcal M$, 
    \begin{equation*}
        \overline{\operatorname{Red}(p \mathcal{M} p)}^{\Vert\cdot\Vert} 
        = \operatorname{Red} (p \mathcal M p: p \mathcal B p ). 
    \end{equation*}
    By \Cref{lemma2.2},  $p \mathcal{M} p$ fails to have property $\Gamma$ for each nonzero projection $p$ in $\mathcal{M}$. From Theorem \ref{may14Thm5.14}, we have that $\overline{\operatorname{Red}(p \mathcal{M} p)}^{\Vert\cdot\Vert}\neq p \M p$. Equivalently, we have
    \begin{equation*}
          p \mathcal M p\setminus  \operatorname{Red} (p \mathcal M p:p\mathcal Bp )\ne \emptyset.
    \end{equation*}
    From Proposition \ref{N:B}, we obtain that $\M\setminus\operatorname{Red}  ( \mathcal M :\mathcal B )$ is dense in $\mathcal{M}$ in the operator norm topology. So $\M\setminus \overline{\operatorname{Red}( \mathcal{M} )}^{\Vert\cdot\Vert}$ is dense in $\mathcal{M}$ in the operator norm topology.  This guarantees that $\operatorname{Red}( \mathcal M)$ is a nowhere dense subset of $\mathcal{M}$ in the operator norm topology.
\end{proof}

\section*{Declarations}

\noindent{\bf Conflict of interest}
On behalf of all authors, the corresponding author states that there is no conflict of interest.

\end{document}